\documentclass[12pt, a4paper, parskip=half, abstracton]{scrartcl}

\usepackage{array}
\usepackage{marginnote}
\usepackage{xcolor}
\usepackage{amscd,amssymb,amsfonts,amsmath,latexsym,amsthm}
\usepackage{hyperref}
\usepackage[all,cmtip]{xy}
\usepackage{mathrsfs}
\usepackage{graphicx}
\usepackage{bm} % for bold symbols in math mode \boldsymbol{}
\usepackage[nameinlink, capitalise]{cleveref}
\usepackage{tikz}
\usepackage{etoolbox}
\def\hB{\hspace*{\fill}$\qed$}% \newline\noindent}

\usepackage[nottoc]{tocbibind} % Bibliography im toc

\usepackage{defs_pp1}
\usepackage{slashed}
\usepackage[utf8]{inputenc}
\usepackage{microtype}
\usepackage[english]{babel}
\usepackage{mathtools}
\usepackage{bm}
\usepackage{esvect}

\title{A characterization of sheaves among six functor formalisms on $\LCH$.}
\author{
Ulrich Bunke\thanks{Fakult{\"a}t f{\"u}r Mathematik,
Universit{\"a}t Regensburg,
93040 Regensburg,
Germany\newline
ulrich.bunke@mathematik.uni-regensburg.de}  and 
Marco Volpe  \thanks{Fakult{\"a}t f{\"u}r Mathematik,
Universit{\"a}t Regensburg,
93040 Regensburg,
Germany\newline
marco.volpe@mathematik.uni-regensburg.de}
}

\numberwithin{equation}{section}
\newtheorem{theorem}{Theorem}[section] 
\newtheorem{prop}[theorem]{Proposition}
\newtheorem{lem}[theorem]{Lemma}

\newtheorem{ddd}[theorem]{Definition}
\newtheorem{kor}[theorem]{Corollary}

\theoremstyle{remark}
\theoremstyle{definition}

\newtheorem{ex}[theorem]{Example}
\newtheorem{rem}[theorem]{Remark}
\newcommand{\ICA}{\mathrm{cIdem}}

\newcommand{\desc}{\mathrm{desc}}
\newcommand{\CH}{\mathbf{CH}}

\newcommand{\CoSh}{\mathrm{CoShv}}

\newcommand{\Shv}{\mathrm{Shv}}

\newcommand{\All}{ \mathcal{A}\mathrm{ll}}

\newcommand{\LCH}{\mathbf{LCH}}

\newcommand{\cK}{\mathcal{K}}

\newcommand{\CAlg}{{\mathrm{CAlg}}}

\newcommand{\cD}{{\mathcal{D}}}

 \newcommand{\Cat}{{\mathbf{Cat}}}

\newcommand{\Open}{{\mathbf{Open}}}

\newcommand{\Span}{\mathrm{Span}}

\newcommand{\Spc}{\mathbf{Spc}}

\renewcommand{\Pr}{\mathbf{Pr}}

\newcommand{\op}{\mathrm{op}}

\renewcommand{\Spc}{\mathbf{An}}

\newcommand{\exa}{\mathrm{ex}}

\newcommand{\Frame}{\mathbf{Frame}}
 \newcommand{\Poset}{\mathbf{Poset}}

\newcommand{\st}{\mathrm{st}}
\renewcommand{\Open}{\mathrm{Open}}

\newcommand{\pt}{\mathrm{pt}}

\begin{document}  \maketitle 
  
  \begin{abstract}  
   Let $\cC$ be any stable presentably symmetric monoidal $\infty$-category. In this paper, we characterize $\Shv(-,\cC)$ on locally compact Hausdorff spaces as the unique six functor formalism satisfying a list of very natural properties. As a consequence, we deduce that every continuous six functor formalism $D$ in the sense of Zhu is equivalent to $\Shv(-, D(\pt))$. 
   %We show that the  six functor formalism $\Shv(-,E)$ on locally compact Hausdorff spaces is uniquely characterized amongst six functor formalisms $D$ with base ring $D(\pt)=E$ which have  canonical descent,  are section-determined and   whose  associated homology theory is continuous.
   \end{abstract} 

  \tableofcontents
  
     \section{Introduction}

The notion of six functor formalism was introduced by Grothendieck and his school in \cite{artinGrothendieckVerdierSGA4} as a framework encoding the formal properties of cohomology theories. Since then, six functor formalisms have appeared throughout topology, algebraic geometry and representation theory, providing a common categorical structure underlying many apparently different setups. Recent developments of \cite{arXiv:2510.26269, arXiv:2410.13038, Cnossen:2025aa} have shed new light on the foundations of the theory of six functor formalisms, in particular by providing a precise formal definition of what a six functor formalism is. One of the advantages of these new perspectives is that they provide effective tools to compare different six functor formalisms.

In topology, and more specifically on $\LCH$, the category of locally compact Hausdorff spaces, a prominent instance of a six functor formalism is given by $\Shv(-, \cC)$, i.e. sheaves valued in a sufficiently nice stable $\infty$-category $\cC$. We refer to \cite{Volpe2025SixOperations} for a development of the latter in the setting of $\infty$-categories, and to \cite{Verdier1965, KashiwaraSchapira1990} for more classical references. To the best of our knowledge, aside from $\Shv(-, \cC)$, there are only a few natural examples of six functor formalisms on $\LCH$. For this reason, one is naturally led to ask whether any six functor formalism on $\LCH$ satisfying a list of expected properties must already arise from sheaves. In order to formulate this question more precisely, let us review some of the most notable features of $\Shv(-,\cC)$.

The first and most apparent feature is that $\Shv(-,\cC)$ forms a sheaf valued in $\Cat$. Moreover, $\Shv(-,\cC)$ is localizing: any closed-open decomposition of a space $X$ induces a stable recollement of $\Shv(X,\cC)$. Following \cite{arXiv:2507.13537}, we refer to these two properties collectively as canonical descent. Since it is a subcategory of presheaves, equivalences in $\Shv(X,\cC)$ can be checked after evaluating at each open $U$ in $X$. We name this property as being section-determined. A less immediate but nevertheless essential feature is that sheaves are finitary. This means that $\Shv(-,\cC)$ preserves cofiltered limits of compact Hausdorff spaces when viewed as %when seen as 
a $\Pr_{\st}^{L,\op}$-valued functor via the pullback functors. As a consequence, sheaf cohomology is finitary as well. For a general six functor formalism $D$, we write $\Gamma^D$ for its associated cohomology theory $X\mapsto p_{X,*} p_X^* 1$, where $1$ in $D(\pt)$ is the monoidal unit and $p_X:X\to\pt$ is the unique map.

The main result of this paper shows that, under a mild additional assumption, these properties suffice to characterize sheaves amongst all six functor formalisms on $\LCH$.

\begin{theorem}[\cref{thkoperthertegrtger}]\label[theorem]{introthm:main}
	Let $D$ be a six functor formalism on the Nagata context $(\LCH,I,P)$. Assume that $D$ satisfies the following properties:
	\begin{enumerate}
		\item $D$ has %satisfies 
		canonical descent;
		\item $D$ is section-determined;
		\item $\Gamma^{D}$ is finitary;
		\item $D(\pt)$ is dualizable.
	\end{enumerate}
	Then $D$ is canonically equivalent to $\Shv(-,D(\pt))$.
\end{theorem}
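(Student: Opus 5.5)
The strategy is to build a comparison functor $\Phi\colon D(X)\to \Shv(X,D(\pt))$, natural in $X$ and compatible with the six functor structure, and then prove it is an equivalence. Write $E:=D(\pt)$. For $F\in D(X)$ we set
\[
\Phi(F)(U):=p_{U,*}\,j_U^*F\in E ,
\]
where $j_U\colon U\to X$ is the inclusion of an open subset. This is a presheaf on $X$ with values in $E$. It is a sheaf because $D$ has canonical descent: $D$ is a $\Cat$-valued sheaf, so $U\mapsto D(U)$ turns open covers into limits. Since $p_{U,*}$ is a right adjoint, it preserves these limits, and the sheaf condition for $\Phi(F)$ follows. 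Condition (2), section-determinedness, says exactly that $\Phi$ is conservative. It remains to show that $\Phi$ is fully faithful and essentially surjective, and that it upgrades to a morphism of six functor formalisms. For the upgrade we plan to use the universal property of $\Shv(-,E)$ from \cite{Volpe2025SixOperations}, or the comparison tools of \cite{arXiv:2510.26269, arXiv:2410.13038}. Concretely, $\Phi$ should be right adjoint to a functor $\Psi\colon \Shv(X,E)\to D(X)$, which sends the sheaf $j_{U,!}\,p_U^*e$ to $j_{U,!}\,p_U^*e$. Since $\Shv(X,E)\simeq \Shv(X)\otimes E$, the functor $\Psi$ is determined by its values on these generators once $E$ is dualizable (condition (4)).

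\textbf{Step 1: compact Hausdorff spaces.} We first treat compact Hausdorff $K$. Here $\Gamma^D(K)=p_{K,*}p_K^*1$ is finitary by condition (3). Writing $K$ as a cofiltered limit of finite CW complexes, or of finite spaces/polyhedra, we compare $\Gamma^D$ with sheaf cohomology $\Gamma(K;E)$. On a point the two agree. Canonical descent, applied to closed covers via the recollements, gives Mayer–Vietoris sequences for both theories. Induction over cells then identifies $\Gamma^D(P)\simeq \Gamma(P;E)$ for finite polyhedra $P$. Finitarity of both theories extends this to all compact Hausdorff spaces. Dualizability of $E$ lets us pass from the unit $1$ to arbitrary coefficients $e\in E$, so that $p_{K,*}p_K^*e\simeq \Gamma(K;e)$.

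\textbf{Step 2: full faithfulness of $\Psi$.} We then show that the unit $\mathrm{id}\to\Phi\Psi$ is an equivalence on generators. Evaluating on an open $V$ reduces this to computing $p_{V,*}\,j_V^*\,j_{U,!}\,p_U^*e$ in $D$ and in sheaves. Using the recollement for the closed-open decomposition $\bar U\setminus U\subset \bar U$, and exhausting $U,V$ by compacts, this becomes a statement about $p_{K,*}p_K^*e$ for compact $K$, which is exactly Step 1.

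\textbf{Step 3: essential surjectivity.} Since $\Phi\Psi\simeq\mathrm{id}$, the functor $\Psi$ is fully faithful. For $F\in D(X)$, the map $\Phi(\Psi\Phi F\to F)$ is an equivalence, so conservativity of $\Phi$ (section-determinedness) gives $\Psi\Phi F\simeq F$. Hence $\Psi$ is an equivalence of categories for each $X$.

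\textbf{Step 4: compatibility with the six operations.} Finally we check that the equivalences of Step 3 are compatible with $f^*$, with $f_!$, and with $\otimes$. $\Psi$ commutes with $j_!$ and $p^*$ by construction, and these functors generate the structure. Base change and the projection formula, both available in $D$ and in $\Shv(-,E)$, then force agreement on all the operations.

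I expect Step 1 to be the main obstacle: proving that $\Gamma^D$ agrees with sheaf cohomology on all compact Hausdorff spaces. This needs a careful reduction to finite polyhedra through cofiltered limits, where finitarity is used essentially. It also needs a construction of a natural comparison map before one can argue inductively. I would obtain that map from the unit $E\to p_{K,*}p_K^*E$, composed with $\Phi$.
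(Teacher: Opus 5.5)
Your architecture matches the paper's. Your $\Psi$ is the paper's $\cB_X$, determined by the cosheaf $U\mapsto j_{U\to X,!}1_U$. Your $\Phi$ is its right adjoint $\cK_X$, with $\cK_X(-)(U)\simeq \mathrm{ev}_U$. Section-determinedness is conservativity of $\cK_X$, and your Step 3 is exactly the paper's argument. Step 4 is also harmless: once each $\cB_X$ is an equivalence natural in $f^*$, right-adjointability of the squares is automatic.

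\textbf{Step 1 has a gap.} Closed excision, finitarity and the value at $\pt$ give an induction over cells no base case. To attach $\Delta^n$ along $\partial\Delta^n$ you need $\Gamma^D(\Delta^n)\simeq 1$, and closed covers of a simplex only produce further copies of $\Delta^n$. What you are implicitly using is homotopy invariance of $\Gamma^D$. That is not a hypothesis here; with it your route does work, cf.\ \cref{rem:htpyinsteadofdual}.

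The missing input is Clausen's rigidity theorem \cite[Thm.~3.6.2]{NKP}: for dualizable $E$, evaluation at $\pt$ is an equivalence from finitary closed-excisive functors $\CH^{\mathrm{op}}\to E$ onto $E$. Its proof is not a cell induction, and this is precisely where dualizability enters. You use dualizability in places where it is not needed. Passing from $1$ to general $e$ follows from the projection formula for $p_{K,*}$. Describing $\Psi$ by its values on generators follows from the cosheaf universal property for any presentable $E$. Its real use is the one you skip.

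\textbf{Step 2 does not close as stated.} You check the unit $\mathrm{id}\to\Phi\Psi$ only on the generators $\hat j_{U,!}\hat p_U^*e$. $\Phi$ involves $p_{V,*}$ for non-compact $V$, and these generators are not compact in $\Shv(X,E)$. So $\Phi\Psi$ is not known to preserve colimits, and the generator check does not give $\Phi\Psi\simeq\mathrm{id}$.

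The paper uses generators only for colimit-preserving $E$-linear functors. It proves the Beck--Chevalley equivalence $\hat p_{K,*}\simeq p_{K,*}\cB_K$ for compact $K$ on the objects $\hat i_{L\to K,*}\hat 1_L$. For this it combines the cohomology comparison, the closed-immersion case $\cB_X\hat i_*\simeq i_*\cB_Z$, and compatibility of Beck--Chevalley maps with composition. Only then does it treat arbitrary objects, via the exhaustion-by-compacts formula
\[
\hat{\mathrm{ev}}_U\cK_X\cB_X\simeq p_{U,*}j_{U\to X}^*\cB_X\simeq\lim_{K\subset U}p_{K,*}\cB_K\hat i_{K\to X}^*\simeq\lim_{K\subset U}\hat p_{K,*}\hat i_{K\to X}^*\simeq\hat{\mathrm{ev}}_U .
\]
You mention exhausting by compacts, but you would need to apply it to all objects rather than only to generators.
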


The assumption that $D(\pt)$ is dualizable may be replaced by the requirement that $\Gamma^D$ is homotopy invariant, see \cref{rem:htpyinsteadofdual} and \cref{rem:mainthmhtpyinv}. 

It is well-known that for sheaves valued in higher categories, equivalences can not always be checked %of sheaves may be checked 
on stalks. However, at least when $\cC$ is dualizable and $X$ is % so-called
 hypercomplete, equivalences in $\Shv(X,\cC)$ can be checked stalkwise. We refer to this property as to being stalk-determined. As a second result we show that, under the finitary assumption, section-determinedness follows from stalk-determinedness.

\begin{prop}\label[prop]{introprop:stalk}
	Let $D$ be a six functor formalism on the Nagata context $(\LCH,I,P)$. Assume that $D$ has %satisfies 
	canonical descent, is finitary, and is stalk-determined. Then $D$ is section-determined.
\end{prop}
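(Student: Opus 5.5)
We have to show: for $X$ in $\LCH$ and a morphism $\phi\colon F\to G$ in $D(X)$ such that $\phi$ induces an equivalence $\Gamma(U,F)\to\Gamma(U,G)$ for every open $U\subseteq X$, $\phi$ is an equivalence. Here we take $\Gamma(U,-)=p_{U,*}j_U^*$. Stalk-determinedness says that $\phi$ is an equivalence as soon as $x^*\phi$ is an equivalence in $D(\pt)$ for every point $x\colon\pt\to X$. The plan is therefore to express stalks through sections. Then section-wise equivalences give stalk-wise equivalences, and stalk-determinedness finishes the argument.

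First, fix $x\in X$. Choose the cofiltered system of compact neighbourhoods $K$ of $x$; in a locally compact Hausdorff space these form a basis at $x$, and $\bigcap K=\{x\}$. Finitarity means $D$ sends the cofiltered limit $\{x\}=\lim K$ of compact Hausdorff spaces to a colimit in $\Pr^L_{\st}$ along pullbacks. From this we will deduce that $x^*F\simeq\operatorname{colim}_K p_{K,*}i_K^*F$. Concretely, the colimit in $\Pr^L$ is computed as the limit of right adjoints, so $D(\pt)\simeq\lim_K D(K)$ along pushforwards. The unit and counit analysis then gives $x^*F\simeq\operatorname{colim}_K p_{K,*}i_K^*F$, using that $p_{K,*}$ for $K\ni x$ together with the restrictions assemble to the canonical comparison.

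Second, we compare sections over compacts with sections over opens. Canonical descent gives the recollement for $K\subseteq X$ closed with open complement. From this we want $p_{K,*}i_K^*F\simeq\operatorname{colim}_{U\supseteq K}\Gamma(U,F)$ for $U$ ranging over open neighbourhoods of $K$. The argument interleaves the neighbourhood systems: $K\subset U\subset K'$, with $K'$ compact and containing a neighbourhood of $K$. Both colimits then compute the same thing, giving $x^*F\simeq\operatorname{colim}_{U\ni x}\Gamma(U,F)$, naturally in $F$. Alternatively, we can skip the $K$-sections entirely. Apply finitarity to $\bigcap \overline U=\{x\}$ and factor the restriction $\Gamma(\overline{U'})\to\Gamma(\overline U)$ through $\Gamma(U)$ whenever $\overline U\subseteq U'$. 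This gives a cofinality argument showing that $\operatorname{colim}_U\Gamma(U,F)\simeq\operatorname{colim}_{\overline U}p_*i^*F$.

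Third, by naturality $x^*\phi$ is a filtered colimit of the equivalences $\Gamma(U,\phi)$, hence an equivalence. Stalk-determinedness then shows $\phi$ is an equivalence.

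The main obstacle is the first step: identifying $x^*F$ with the filtered colimit of sections over compact neighbourhoods. Here finitarity is phrased for $D$ as a $\Pr^{L,\op}_{\st}$-valued functor, and we must convert "$D(\{x\})\simeq\operatorname{colim}D(K)$" into a formula for $x^*$ of a fixed object. I would use the standard description of colimits in $\Pr^L$ as limits of right adjoints. Under this description, the identity of $D(\pt)$ decomposes as the colimit of $x_K^*\,i_{K}^{x,*}{}$ composites, i.e. of $p_{K}^{*}$-type units applied after pushforward. This yields $\operatorname{colim}_K x^*i^{K}_{*}\,?$ forms. Some care is needed with $p_{K,*}$ preserving filtered colimits; it does, since $p_{K,*}=p_{K,!}$ for $K$ compact, which is a left adjoint.
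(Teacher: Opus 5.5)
\textbf{There is a genuine gap.} Your argument treats stalk-determinedness as saying that the stalks $(x^*)_{x\in X}$ are jointly conservative on $D(X)$ for \emph{every} $X\in\LCH$. In the paper, stalk-determined means stalk-determined only on \emph{hypercomplete} locally compact Hausdorff spaces. This restriction is essential: as the introduction of the paper notes, for sheaves valued in higher categories equivalences cannot in general be checked on stalks.

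With the correct definition, your three steps only show that $D$ is section-determined on hypercomplete $X$. This is exactly the paper's easy lemma that stalk-determined on $X$ implies section-determined on $X$, via $x^*\simeq\operatorname{colim}_{U\ni x}\mathrm{ev}_U$. For non-hypercomplete $X$ your argument gives nothing.

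A symptom of the problem is that you use finitarity only to obtain the stalk formula. That formula already follows from the recollement, the exhaustion axiom and cocontinuity of $p_{K,*}$ for compact $K$ (P-ra), without finitarity. So on your reading the finitarity hypothesis would be superfluous. (Your derivation of the formula from finitarity in the last paragraph is also not actually carried out, but this is moot.)

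The missing idea is to use finitarity to reduce from arbitrary $X$ to finite-dimensional, hence hypercomplete, spaces.

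\begin{enumerate}
\item \emph{Reduce to compact $X$.} Use the one-point compactification $j\colon X\to X^+$. The functor $j_*$ is fully faithful, and passing to right adjoints in I-bc gives $\mathrm{ev}^{X^+}_W\circ j_*\simeq \mathrm{ev}^X_{W\cap X}$ for every $W\in\Open(X^+)$.
\item \emph{Approximate compact $X$ by finite-dimensional spaces.} Embed $X$ into $[0,1]^S$, where $S$ is the set of continuous maps $X\to[0,1]$. Write $X\cong\lim_F Y_F$, where $Y_F$ is the image of $X$ in $[0,1]^F$ for finite $F\subseteq S$.
\item \emph{Joint conservativity.} Finitarity gives $D(X)\simeq\lim_F D(Y_F)$ along pushforwards, so the family $(f_{F,*})_F$ is jointly conservative.
\item \emph{Transfer sectionwise equivalences.} Passing to right adjoints in I-bc for the pullback of an open $V\subseteq Y_F$ gives $\mathrm{ev}_V\circ f_{F,*}\simeq \mathrm{ev}_{f_F^{-1}(V)}$. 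Hence if $\phi$ is a sectionwise equivalence on $X$, so is $f_{F,*}\phi$ on $Y_F$.
\item \emph{Conclude.} Each $Y_F$ is a closed subspace of a finite-dimensional cube, so it is hypercomplete. Your argument applies there and shows that $f_{F,*}\phi$ is an equivalence for all $F$. Joint conservativity then shows that $\phi$ is an equivalence.
\end{enumerate}
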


We conclude this introduction by mentioning some applications of our results. An immediate consequence of \cref{introthm:main} and \cref{introprop:stalk} is that any continuous six functor formalism on $\LCH$ in the sense of \cite{arXiv:2507.13537} is equivalent to $\Shv(-,D(\pt))$. In \cite{bunke2026theory}, the first %named 
author introduces a version of $E$-theory of $C^{*}$-algebras parametrized by arbitrary topological spaces, or more generally by locales. \cref{introthm:main} is used there to show that the $E$-theory  of $C^{*}$-algebras over a locally compact Hausdorff space is equivalent to sheaves valued in $E$.

 \subsection{Acknowledgments} Some of the ideas presented in this paper originated in the context of an ongoing collaboration of MV with Maxime Ramzi and Sebastian Wolf. MV especially thanks Maxime for discussions related to the comparison between a six-functor formalism D and sheaves. 
  
  \section{Definitions}%\label{koperthrtgerget}
  
The goal of this section is twofold. First, we recall the notion of a six functor formalism. Second, we introduce the properties of a six functor formalism $D$ that will later  %show  
imply that $D$ is equivalent to $\Shv(-,D(\pt))$. We also study the relations among these properties.

\subsection{Definition of a six functor formalism}

We begin by recalling the definition of a six functor formalism. We consider the Nagata set-up $(\LCH,I,P)$ on  the category $\LCH$ of locally compact Hausdorff spaces with the  subclasses of morphisms  
\begin{itemize}
%\item $E$: all morphisms
\item $I$ - the  open inclusions  
\item $P$  - the proper maps.
\end{itemize}
By  $\Pr^{L}_{\st}$ we denote the symmetric monoidal $\infty$-category of presentable stable $\infty$-categories and left-adjoint functors equipped with the Lurie tensor product. We write $ \CAlg(\Pr^{L}_{\st})
$ for the $\infty$-category of presentably symmetric monoidal stable $\infty$-categories and symmetric monoidal left-adjoint functors.
We consider a functor
\begin{equation}\label{bsdofjiovsfvsdfv}D:\LCH^{\op}\to \CAlg(\Pr^{L}_{\st})\ .
\end{equation} 
We will usually denote the contravariant functoriality of $D$ on morphisms $f$ in $\LCH$ by $f^{*}$, but sometimes we also use the notation $D(f)$, in particular when there are different functors floating arround.
Since $D$ takes values in presentable categories and colimit-preserving functors,  the functors $f^{*}$ have right-adjoints which will be denoted by $f_{*}$.

In this section we  formulate various conditions on $D$ and discuss relations between them.

\begin{ddd}\label[ddd]{okhpertgertgrtegetrhehhrt}
We say that $D$ is compatible with $I$ if:\begin{enumerate}
\item ( I-la: left adjoints) For every $i:U\to X$ in $I$ the functor $i^{*}$ admits a left-adjoint $i_{!}:D(U)\to D(X)$.
\item\label{okhpertgertgrtegetrhehhrt111}  (I-bc: base change)  For every cartesian square $$ \xymatrix{U\ar[r]^{i}\ar[d]^{g} &X \ar[d]^{f} \\ V\ar[r]^{j} &Y } $$ in $\LCH$ with $i,j$ in $I$
the square $$\xymatrix{ D(Y)\ar[r]^{j^{*}}\ar[d]^{f^{*}} & D(V)\ar[d]^{g^{*}} \\D(X) \ar[r]^{i^{*}} & D(U)} $$
is horizontally left-adjointable, i.e.  the  canonical Beck-Chevalley map $ i_{!}g^{*}  \to  f^{*}j_{!} $ is an equivalence.
 \item\label{okhpertgertgrtegetrhehhrt1111} (I-pf: projection formula) For every $i:U\to X$ in $I$ the canonical map $$i_{!}(-\otimes_{U} i^{*}(-))\to i_{!}(-)\otimes_{X} i^{*}(-)$$ is an equivalence.
 \end{enumerate}
\end{ddd}

\begin{ddd}\label[ddd]{zpozhrtzhztjrzthtzhtrh}
We say that $D$ is compatible with $P$ if:\begin{enumerate}
\item\label{zpozhrtzhztjrzthtzhtrh1} (P-ra: right adjoint) For every $p:X\to Y$ in $P$ the right-adjoint $p_{*}:D(X)\to D(Y)$ of the functor $p^{*} $  is cocontinuous.  
\item (P-bc: base change)  For every cartesian square $$ \xymatrix{W\ar[r]^{f}\ar[d]^{q} &X \ar[d]^{p} \\ Z\ar[r]^{g} &Y } $$ in $\LCH$ with $p,q$ in $P$
the square $$\xymatrix{ D(Y)\ar[r]^{g^{*}}\ar[d]^{p^{*}} & D(Z)\ar[d]^{q^{*}} \\D(X) \ar[r]^{f^{*}} & D(W)} $$
is vertically right-adjointable, i.e. the  canonical Beck-Chevalley map $ g^{*}p_{*}   \to q_{*} f^{* } $ is an equivalence.
 \item (P-pf: projection formula) For every $p:X\to Y$ in $P$ the canonical map $$p_{*}(-)\otimes_{Y}(-) \to p_{*}(-\otimes_{X} p^{*}(-))$$ is an equivalence.
 \end{enumerate}
\end{ddd}

If $D$ is compatible with $I$ and $P$, then we make the following definition:
 \begin{ddd}
We say that $D$ satisfies the mixed Beck-Chevalley (mBC) condition if 
for every   cartesian square $$ \xymatrix{W\ar[r]^{i}\ar[d]^{q} &X \ar[d]^{p} \\ Z\ar[r]^{j} &Y } $$ in $\LCH$ with $p,q$ in $P$
and $i,j$ in $I$ 
the square  $$\xymatrix{ D(Z)\ar[r]^{j_{!}}\ar[d]^{p^{*}} & D(Y)\ar[d]^{q^{*}} \\D(W) \ar[r]^{i_{!}} & D(X)} $$  given by I-bc
is vertically right-adjointable, i.e. the  canonical Beck-Chevalley map $    j_{!}p_{*}   \to  q_{*}  i_{!}   $ is an equivalence.
\end{ddd}

Let \begin{equation}\label{bwerfwerwregw}\cB:D\to E:\LCH^{\op}\to \CAlg(\Pr^{L}_{\st}) \end{equation}be a natural transformation.  
 In the following it is useful to write $D(i)$ instead of $i^{*}$.
\begin{ddd}\label[ddd]{kohperhretgertge} \mbox{}\begin{enumerate}\item \label{thokerpthertgertge} (I($\cB$))
We say that $\cB$ is compatible with $I$ if for every morphism $j:U\to X$ in $I$
the square
$$\xymatrix{D(X)\ar[r]^{\cB_{X}}\ar[d]^{D(j)} &E(X) \ar[d]^{E(j)} \\D(U) \ar[r]^{\cB_{U}} & E(U) } $$
is vertically left-adjointable, i.e. the  canonical Beck-Chevalley map $ E(j)_{!}B_{U}\to \cB_{X} D(j)_{!}$ is an equivalence.
\item  (P($\cB$)) We say that $\cB$ is compatible with $P$ if for every morphism $p:X\to Y$ in $P$ 
the square
$$\xymatrix{D(Y)\ar[r]^{\cB_{Y}}\ar[d]^{D(p)} &E(Y) \ar[d]^{E(p)} \\D(X) \ar[r]^{\cB_{X}} & E(X) } $$
is vertically right-adjointable, i.e. the  canonical Beck-Chevalley map $B_{X} E(p)_{*}\to D(p)_{*}\cB_{Y} $ is an equivalence.
\end{enumerate}
\end{ddd}

\begin{ddd} \mbox{}\label[ddd]{kojpjrtzjzhrtzh}\begin{enumerate}\item 
A six functor formalism  on the Nagata context $(\LCH,I,P)$ is a functor
$D$ as in \eqref{bsdofjiovsfvsdfv} that is compatible with $I$, $P$ and satisfies the mBC-condition.
\item A morphism between six functor formalisms on the Nagata context $(\LCH,I,P)$ is  a natural transformation
  as   in \eqref{bwerfwerwregw} 
that is compatible with $I$ and $P$.
\end{enumerate}
\end{ddd}

\begin{rem}
A reader familiar with some of the recent literature on six functor formalisms might think that our definition does not match with the one provided in \cite{arXiv:2410.13038, arXiv:2510.26269}. We now explain why the two approaches are equivalent. Thanks to the work of \cite{liu2012gluing, Cnossen:2025aa}, a six functor formalism $D$ on $(\LCH,I,P)$ as in \cref{kojpjrtzjzhrtzh} gives rise to a lax symmetric monoidal functor from the span category $\Span(\LCH,\All)$ to $\Pr^{L}_{\st}$, i.e. a six functor formalism in the sense of \cite{arXiv:2410.13038, arXiv:2510.26269}. For every morphism $f:X\to Y$ in $\LCH$, this extension gives in particular a functor $f_{!}:D(X)\to D(Y)$ with right adjoint $f^{!}$, satisfying $f_{!}\dashv f^{*}$ whenever $f\in I$, and 
$f_{!}\simeq f_{*}$ whenever $f\in P$. By virtue of \cite[Theorem 3.3]{dauser2024uniqueness}, such extension is uniquely determined from $D$. As a consequence, we see that the two approaches are equivalent. 
\hB
\end{rem}

%\begin{ddd}
%We let $$\sFF(\LCH,I,P)\subseteq \Fun(\LCH^{\op},\CAlg(\Pr^{L}_{\st}))$$
%denote the subcategory of 6 functor calculi and morphisms.
%\end{ddd}

%\fuli{add a remark that this is a same as 6 functor caluli defined on a span category etc, cite heyer-Mann, Scholze}

\subsection{Descent and finitariness}

In this subsection, we introduce some descent and finitariness conditions on a six functor formalism, that were also considered in \cite{arXiv:2507.13537}. 

Let $D:\LCH^{\op}\to \CAlg(\Pr^{L}_{\st})$ be a functor % as in \eqref{bsdofjiovsfvsdfv}
 that is compatible with $I$.
The following definition is taken from  \cite[Def. 4.11]{arXiv:2507.13537}.
\begin{ddd}\label[ddd]{kojghptrertgt}
We say that $D$ has  canonical  descent if it satisfies:
\begin{enumerate} 
\item (zero) \label[item]{hkopzetgerth}$D(\emptyset)\simeq 0$
\item (localization)  \label[item]{hkopzetgert1h} For every $X$ in $\LCH$, open subset $j:U\to X$ and closed complement $i:Z:=X\setminus U\to X$ we have a stable recollement
$$ \xymatrix{ D(U)\ar@/^0.8cm/[r]_{\perp}^{j_{!}}\ar@/_0.8cm/[r]^{\perp}_{j_{*}}&\ar[l]_{j^{*}}D(X)\ar@/^0.8cm/[r]_{\perp}^{i^{*}}\ar@/_0.8cm/[r]^{\perp}_{i^{!}}&\ar[l]_{i_{*}}D(Z)}\ .$$
 \item  (open exhaustions)
For every filtered family $(U_{i})_{i\in I}$ in $\Open(X)$ with $U:=\bigcup_{i\in I} U_{i}$ we have an equivalence
\begin{equation}\label{giojewroigwerfewrfwer}D(U)\stackrel{\simeq}{\to} \lim_{i\in I^{\op}} D(U_{i})
\end{equation} in $\Pr^{L}_{\st}$ induced by contravariant functoriality for the inclusions.
 \hB
\end{enumerate}
If $D$ satisfies only the first two conditions, we say that $D$ is localizing.
\end{ddd}

\begin{rem} Compatibility of $D$ with $I$  first of all ensures by I-la that $j_{!}$ exists.
Condition \ref{hkopzetgerth} together with $U\cap Z=\emptyset$  and  I-bc ensure that $i^{*}\circ j_{!}\simeq 0$. %The  
Condition \ref{hkopzetgert1h} then requires that
 the sequence $$D(U)\stackrel{j_{!}}{\to} D(X)\stackrel{i^{*}}{\to} D(Z)$$ is a fibre and cofibre sequence in $\Pr^{L}_{\st}$. 
 As a consequence we get a cofibre sequence of endofunctors
 \begin{equation}\label{bwiojoievewrvfds}j_{!}j^{*}\to \id\to i_{*}i^{*}
\end{equation} 
of $D(X)$.
Furthermore, the pair of functors $(j^{*},i^{*})$ is jointly conservative. 
 % and that
% $j_{!}$ and $i_{*}$ are fully faithful. Then also $D(Z)\stackrel{i_{*}}{\to} D(X)\stackrel{j^{*}}{\to} D(U)$
% is a fibre sequence in $\Pr^{L}_{\st}$.
  \hB
\end{rem}

As a  left adjoint of the lax symmetric monoidal functor $j^{*}$, the functor $j_{!}$ is canonically oplax symmetric monoidal.
In what follows,   we will show that localization for $D$ implies that $j_{!}$ is actually strong 
oplax symmetric monoidal.
%As a  left adjoint of the lax symmetric monoidal functor $j^{*}$, the functor $j_{!}$ is canonically oplax symmetric monoidal.
%In what follows,   we will show that localization for $D$ implies that $j_{!}$ is actually strong 
%oplax symmetric monoidal.
%The assumption of localization for $D$ makes\fuli{I do not understand what this has to do with localization?} $j_!$ for any $j$ in $I$ into a strong oplax symmetric monoidal functor in the following sense.
%\textcolor{blue}{I want to leave at least this to justify why we are giving the following definition, otherwise it looks like it appeared out of nowhere.}

\begin{ddd} 
	Let $\cC$, $\cD$ be two symmetric monoidal $\infty$-categories. A oplax monoidal functor $F:\cC\to\cD$ is called strong if, for every $C_1,C_2\in\cC$, the map $$F(C_1\otimes C_2)\to F(C_1)\otimes F(C_2)$$
	given by the oplax structure is invertible.
\end{ddd}

We write $\CAlg(\Pr^{L})_{\mathrm{soplax}}$ (or, $\CAlg(\Pr^{L}_{\st})_{\mathrm{soplax}}$, respectively) for the $\infty$-category of presentably symmetric monoidal (stable) $\infty$-categories and strong oplax symmetric monoidal functors between them.
 
 \begin{lem}\label[lem]{ggjiwoergergewfrefwref}
Let $D:\LCH^{\op}\to \CAlg(\Pr^{L}_{\st})$ be a functor %as in \eqref{bsdofjiovsfvsdfv} 
that
%which 
is compatible with $I$. Assume that $D$ is localizing. Let $X$ be in $\LCH$, and $j:U\to X$ be the inclusion of an open subset. Then the functor $j_!:D(U)\to D(X)$ is strong oplax symmetric monoidal.
\end{lem}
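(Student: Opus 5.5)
We have to show that for $A,B$ in $D(U)$ the oplax structure map
$$\phi:\ j_{!}(A\otimes_{U}B)\to j_{!}A\otimes_{X} j_{!}B$$
is an equivalence. The strategy is to identify $j_{!}A\otimes_X j_{!}B$ with a term coming from the projection formula I-pf, and then to check the resulting map on the two pieces of the recollement. The map $\phi$ is adjoint to $A\otimes B\to j^{*}(j_{!}A\otimes j_{!}B)\simeq j^{*}j_{!}A\otimes j^{*}j_{!}B$, where we use that $j^{*}$ is symmetric monoidal and compose the unit maps $A\to j^{*}j_{!}A$ and $B\to j^{*}j_{!}B$.

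\textbf{Step 1: the unit is an equivalence.} First we check that $j^{*}j_{!}\simeq \id$, i.e.\ that $j_{!}$ is fully faithful. Apply I-bc to the cartesian square with $U\times_X U=U$ and both legs $j$; both horizontal maps are in $I$. This gives $j^{*}j_{!}\simeq \id_{!}\,\id^{*}\simeq\id$, and the identification is compatible with the unit. (This also follows from the recollement.) Hence $j^{*}\phi$ is an equivalence.

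\textbf{Step 2: restriction to the closed complement.} By the localization property, the pair $(j^{*},i^{*})$ is jointly conservative. It therefore remains to show that $i^{*}\phi$ is an equivalence. Since $i^{*}$ is symmetric monoidal, we have $i^{*}(j_{!}A\otimes j_{!}B)\simeq i^{*}j_{!}A\otimes i^{*}j_{!}B$. Moreover $i^{*}j_{!}\simeq 0$, using I-bc for the square with $Z\times_X U=\emptyset$ together with $D(\emptyset)\simeq 0$. So both the source $i^{*}j_{!}(A\otimes B)$ and the target of $i^{*}\phi$ vanish, and $i^{*}\phi$ is trivially an equivalence.

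\textbf{Main obstacle.} The delicate point is the identification in Step 1: one must verify that $j^{*}\phi$, built from the oplax structure, really is the map $A\otimes B\to j^{*}j_{!}A\otimes j^{*}j_{!}B$ given by the tensor product of the units. This is the standard description of the oplax structure on a left adjoint of a symmetric monoidal functor, namely the mate of the lax structure. Once this is granted, invertibility of the unit finishes the argument.

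Alternatively, one can avoid the recollement altogether. By I-pf, $j_{!}A\otimes j_{!}B\simeq j_{!}(A\otimes j^{*}j_{!}B)\simeq j_{!}(A\otimes B)$, and it then suffices to check that this composite is inverse to $\phi$. I would present the conservativity argument as the main proof, since it requires fewer coherence checks.
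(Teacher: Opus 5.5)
Your proposal is correct and is essentially the paper's proof: joint conservativity of $(j^{*},i^{*})$ from the recollement, $j^{*}j_{!}\simeq\mathrm{id}$ to see that $j^{*}\phi$ is invertible, and $i^{*}j_{!}\simeq 0$ to see that $i^{*}\phi$ is a map between zero objects. Your identification of the adjoint of $\phi$ as $\mu\circ(\eta_{A}\otimes\eta_{B})$ spells out the mate computation that the paper leaves implicit. Your alternative also works, because $\phi_{A,B}$ is the I-pf map at $(A,j_{!}B)$ precomposed with $j_{!}(\mathrm{id}_{A}\otimes\eta_{B})$; this route shows the lemma holds without the localization hypothesis.
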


\begin{proof}
	Since the functor  $j^{*}$ is symmetric monoidal, its left-adjoint $j_{!}$ is oplax symmetric monoidal. We must show that the  op-lax monoidal structure map $$j_{!}(-\otimes -)\to j_{!}(-)\otimes j_{!}(-)$$  is an equivalence. Since $(j^{*},i^{*})$ is jointly conservative (with $i:X\setminus U\to X$ the inclusion of the complement), it suffices to observe  that it becomes an equivalence after application of $j^{*}$ and $i^{*}$.
	If we apply $j^{*}$ and use that this functor is symmetric monoidal and $j^{*}j_{!}\simeq \id$, then we get the identity map.
	If we apply $i^{*}$, then using that this functor is symmetric monoidal again and $i^{*}j_{!}\simeq 0$ we get
	an equivalence between zero objects.
\end{proof}

\begin{rem}
Note that the limit in \eqref{giojewroigwerfewrfwer} can also be interpreted in $\Cat^{\exa}_{\infty}$ since
the functor $\Pr^{L}_{\st}\to \Cat^{\exa}_{\infty}$ preserves and reflects limits.
The exhaustion condition  implies   equivalences \begin{equation}\label{gwerfwrgfdfg}\colim_{i\in I} j_{i,!} j_{i}^{*}\to j_{  !} j_{  }^{*}  
\end{equation}
and
\begin{equation}\label{gwerfwfeerferrgfdfg} j_{ *}j_{ }^{*}\to \lim_{i\in I^{\op}} j_{i,* } j_{i}^{*}\end{equation}
of endofunctors of $D(X)$,
where $j:U\to X$ and $j_{i}:U_{i}\to X$ are the inclusions.
 \hB
\end{rem}

 The following notion is taken  from  \cite[Def. 3.1]{arXiv:2507.13537}.
\begin{ddd}\label[ddd]{kohperthertgertg}\mbox{}\begin{enumerate}
\item 
A functor $D:\LCH^{\op}\to \CAlg(\Pr^{L}_{\st})$ %as in   \eqref{bsdofjiovsfvsdfv} 
is called a coefficient system 
 if it is compatible with $I$ (see \cref{okhpertgertgrtegetrhehhrt}) and satisfies canonical descent (in the sense of \cref{kojghptrertgt}). 
 \item A morphism between coefficient systems
 is a natural transformation of functors as in   \eqref{bwerfwerwregw} 
 which is compatible with $I$ (in the sense of \cref{kohperhretgertge}.\ref{thokerpthertgertge}).
  \end{enumerate}
\end{ddd}

%\begin{rem}\uli{remove?}
%In the literature of 6 functor calculi  one also considers another descent condition called sheafyness.
%\begin{ddd}
%A 6 functor calculus $D$ is called sheafy if $D$ is a sheaf with values in $\Cat^{\exa}_{\infty}$.
%\end{ddd}
%In contrast to canonical descent sheafyness requires instead of the localization condition  \cref{kojghptrertgt}.\ref{kohperhgertgert}
%the condition of descent for open coverings by pairs of open subsets.
%If $D$ satisfies canonical descent then it is sheafy.   \hB
% \end{rem}
 
 We consider a functor $D:\LCH^{\op}\to \CAlg(\Pr^{L}_{\st})$. %as in   \eqref{bsdofjiovsfvsdfv}.
 Let $\CH\subseteq\LCH$ be the full subcategory   of compact Hausdorff spaces.
 \begin{ddd}\label[ddd]{hkerpohertgertgtr}
 We say that $D$ is finitary if for every cofiltered system $(X_{i})_{i\in I}$ in $\CH$
 with $X:=\lim_{i\in I}X_{i}$   we have
 an equivalence
 $$  \colim_{i\in I^{\op}} D(X_{i})\stackrel{\simeq}{\to} D(X)\ ,$$
 where the colimit is interpreted in $\Pr^{L}_{\st}$ for the contravariant  functoriality.  
 \end{ddd}
 
 \begin{rem}\label[rem]{hkoperthevsfdvftrgertge}
 We can equivalently require that 
 $$ D(X)\stackrel{\simeq}{\to}  \lim_{i\in I} D(X_{i})  $$
for the covariant  functoriality $f_{i,*}:D(X)\to D(X_{i})$,
where the limit is interpreted  in  $\Cat^{\exa}_{\infty}$. 
%Continuity 
Finitariness implies an equivalence
\begin{equation}   \colim_{i\in I^{\op}} f_{i}^{*}f_{i,*}\stackrel{\simeq}{\to} \id_{D(X)}\ ,\end{equation}
of endofunctors of $D(X)$ and that the family
$(f_{i,*}:D(X)\to D(X_{i}))_{i\in I}$ is jointly conservative.
\hB
 \end{rem}
 
 \subsection{Sections and cohomology}

In this subsection, we define the functors of evaluation at an open subset and the cohomology theory associated with any six functor formalism. We define $D$ to be section-determined at a space $X$ if the family of evaluation funtors indexed by all opens in $X$ is jointly conservative. We also observe how some of the properties introduced earlier on $D$ imply analogous properties on the cohomology theory associated with $D$. 

Let $D:\LCH^{\op}\to \CAlg(\Pr^{L}_{\st})$ be a functor. %as in     \eqref{bsdofjiovsfvsdfv}.
\begin{ddd}\label[ddd]{koptrhrtegtergrtgertg} For $X$ in $\LCH$ and  $U$  in $\Open(X)$,   
we define the evaluation 
$$\ev_{U}:=p_{U,*} j_{U\to X}^{*}:D(X)\to D(\pt)\ .$$
 \end{ddd} Here 
  $p_{U}:U\to \pt$  is the projection    and $j_{U\to X}:U\to X$ is the inclusion.
  For $A$ in $D(X)$, we call $\ev_{U}(A)$ the  sections of $A$ on $U$.
  
 \begin{ddd}\label[ddd]{lkpohejztjrtz} \mbox{}
	\begin{enumerate} \item
		We say that $D$ is section-determined on $X$ if the collection $(\ev_{U})_{U\in \Open(X)}$ of evaluation   maps  is jointly conservative. \item  We say that $D$ is section-determined if it is section-determined on every $X$ in $\LCH$.
	\end{enumerate}
\end{ddd}

\begin{rem}
	In the case of $D=\Shv(-,E)$ the evaluation $\ev_{U}$ sends a sheaf $F$ to its value on $U$, i.e. we have  $\ev_{U}(F)\simeq F(U)$.  In this case, $D$ is tautologically  section-determined.
	\hB
\end{rem}
  
  Consider a functor  $D:\LCH^{\op}\to \CAlg(\Pr^{L}_{\st})$.
  %Assume that $D$ is as in   \eqref{bsdofjiovsfvsdfv}.  
   \begin{ddd}\label[ddd]{okphertgertgerg}
  	We   define the  cohomology functor associated with  $D$ by
  	$$\Gamma^{D}:\LCH^{\op}\to D(\pt)\ , \quad X\mapsto p_{X,*} p_{X}^{*}1\ .$$\end{ddd}
  Here 
  $1$ is  the monoidal unit %tensor unit 
   in $D(\pt)$. 
  %\fuli{why functor, we have used this before, e.g. in the definition of the stalk. How do we deal with this problem in general?}
  \begin{rem}
  	If $f:X\to Y$  is a morphism  in $\LCH$, then the contravariant functoriality for $\Gamma^{D}$ is given by
  	$$\Gamma^{D}(f):\Gamma^{D}(Y)\simeq p_{Y,*} p_{Y}^{*}1\to p_{Y,*} f_{*}f^{*}p_{Y}^{*}1 \simeq 
  	p_{X,*}p_{X}^{*}1\simeq \Gamma^{D}(X)$$ using the unit of the adjunction $f^{*}\dashv f_{*}$
  	and $p_{Y}\circ f=p_{X}$. See \cite[Proposition 3.3]{Volpe2025SixOperations} for details on how to make this construction fully coherent.
  	Furthermore note that $$\Gamma^{D}(X)\simeq \ev_{X}(1_{X}).$$
  	\hB
  \end{rem}
  
  % \fuli{removed strong excision, since the cphomology does not satisfy this}
  
  Let $\cC$ be a stable $\infty$-category and $H:\CH^{\op}\to \cC$ be some functor.
  \begin{ddd}\label[ddd]{kohperthrtgeg}
  	We say that $H$ satisfies   strong  excision if for every  map $f:(X,A)\to (Y,B)$ of pairs in $\CH$ that induces a homeomorphism
  	$X\setminus A\stackrel{\cong}{\to} Y\setminus B$ the  square
  	$$\xymatrix{H(Y)\ar[r]\ar[d]&H(B)\ar[d]\\H(X)\ar[r]&H(A)}$$
  	is cartesian.
  \end{ddd}
  %  %   \begin{ddd}\label{kohperthrtgeg}
  	%  We say that $H$ satisfies   closed excision if for every $X$ and closed subsets $Y,Z$ such that $Y\cup Z=X$     the  square
  	%  $$\xymatrix{H(X )\ar[r]\ar[d]&H(Y)\ar[d]\\H(Z)\ar[r]&H(Y\cap Z)}$$
  	%  is cartesian.
  	%  \end{ddd}
  
  %  \fuli{removed the lemma trying to verrify strong excision fo $\Gamma^{D}$ since not provable, since the cphomology does not satisfy this}
  \begin{lem}\label[lem]{kopthrthertgertg}
  	If $D$ is  a six functor formalism that satisfies canonical descent, then the restriction of  $\Gamma^{D}$ to compact Hausdorff spaces satisfies strong excision. \end{lem}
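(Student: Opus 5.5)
The idea is to identify the total fibres of the two horizontal maps in the square with compactly supported cohomology of the common open complement, using localization. Fix a map of pairs $f:(X,A)\to(Y,B)$ in $\CH$ inducing a homeomorphism $U:=X\setminus A\cong V:=Y\setminus B$.

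\emph{Step 1: the fibres.} Let $j:U\to X$ be the open inclusion and $i:A\to X$ the closed complement. Apply the cofibre sequence \eqref{bwiojoievewrvfds}, $j_{!}j^{*}\to\id\to i_{*}i^{*}$, to $1_{X}=p_{X}^{*}1$ and then $p_{X,*}$. Using $i^{*}p_{X}^{*}\simeq p_{A}^{*}$ and $p_{X,*}i_{*}\simeq p_{A,*}$, this gives a fibre sequence
$$p_{X,*}j_{!}1_{U}\to \Gamma^{D}(X)\to \Gamma^{D}(A)\ .$$
The second map is the restriction $\Gamma^{D}(i)$, since it is induced by the unit $\id\to i_{*}i^{*}$. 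Similarly, with $j':V\to Y$ and $i':B\to Y$ we get $p_{Y,*}j'_{!}1_{V}\to\Gamma^{D}(Y)\to\Gamma^{D}(B)$.

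\emph{Step 2: comparing the fibres.} $f$ is a map of compact Hausdorff spaces, hence proper. Because $f$ restricts to a homeomorphism $U\cong V$ and $f^{-1}(V)=U$ (as $f(A)\subseteq B$ and $f|_{U}$ lands in $V$), the square with $U\to X$, $V\to Y$, $f$ and $f|_{U}$ is cartesian. The mBC condition then gives $j'_{!}(f|_{U})_{*}\simeq f_{*}j_{!}$. Since $f|_{U}$ is an isomorphism, $(f|_{U})_{*}1_{U}\simeq 1_{V}$, and so $f_{*}j_{!}1_{U}\simeq j'_{!}1_{V}$. Applying $p_{Y,*}$ and using $p_{Y,*}f_{*}\simeq p_{X,*}$ identifies the two fibres.

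\emph{Step 3: compatibility, the main obstacle.} It remains to check that this equivalence is the map induced on fibres by the vertical maps $\Gamma^{D}(f)$ of the square. By the stable pasting criterion the square is then cartesian.

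To do this, I would compare the two cofibre sequences at the level of endofunctors of $D(Y)$. Apply $f_{*}$ to the localization sequence on $X$, and map the localization sequence on $Y$ into it via the unit $\id\to f_{*}f^{*}$, using $f^{*}j'_{!}\simeq j_{!}(f|_{U})^{*}$ (I-bc) and $f^{*}i'_{*}\to i_{*}(f|_{A})^{*}$.

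Naturality of the sequence $j_{!}j^{*}\to\id\to i_{*}i^{*}$ under pullback means the map of the fibre terms is
$$j'_{!}j'^{*}\to f_{*}f^{*}j'_{!}j'^{*}\simeq f_{*}j_{!}j^{*}f^{*}\ .$$
Evaluated on $1_{Y}$, this should be the Beck–Chevalley equivalence of Step 2 composed with the unit of $(f|_{U})^{*}\dashv(f|_{U})_{*}$, which is invertible. Checking this identification of adjunction mates is the only delicate point; it is a standard mate-calculus verification.

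Alternatively, one can avoid it by testing after $j'^{*}$ and $i'^{*}$, which are jointly conservative. Under $j'^{*}$ the map becomes the unit for the isomorphism $f|_{U}$. Under $i'^{*}$ both sides vanish, because $i'^{*}f_{*}j_{!}\simeq (f|_{A})_{*}\,i^{*}j_{!}\simeq 0$ by P-bc applied to the cartesian square $A=f^{-1}(B)$ (which holds since $f^{-1}(V)=U$).
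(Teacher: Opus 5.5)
Your proposal is correct and takes the same route as the paper. Both map the localization fibre sequences for $(Y,B)$ into those for $(X,A)$ via the units of $f^{*}\dashv f_{*}$, and identify the fibres $p_{Y,*}j'_{!}1_{V}\simeq p_{X,*}j_{!}1_{U}$ using the mBC condition for the cartesian square over $V\subseteq Y$ together with $f_{|U}$ being a homeomorphism. Your Step 3 supplies the check, left implicit in the paper, that this identification is the map induced on fibres, and your variant via the jointly conservative pair $(j'^{*},i'^{*})$ and P-bc is a valid way to carry it out.
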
 
  \begin{proof} 
  	% Let $X$ be a compact Hausdorff space, $U$ be  an open subset of $X$ and $Z:=X\setminus U$ be its closed complement. 
  	% Strong excision requires that the fibre $\Fib(H_{D}(X)\to H_{D}(Z))$ 
  	% of the restriction map along the inclusion $Z\to X$ only depends on the complement $U$, see below for the precise statement.  
  	We use the notation from \cref{kohperthrtgeg} and set $U:=X\setminus A$ and $V:=Y\setminus B$.
  	By  localization (specializing \eqref{bwiojoievewrvfds})  we get
  	the map of fibre sequences
  	%$$p_{X,*}j_{U\to X,!} j_{U\to X}^{*}p_{X}^{*}1 \to p_{X,*}p_{X}^{*} 1 \to  p_{X,*}i_{A\to X,*}i_{A\to X}^{*}p_{X}^{*}1\ .$$
  	$$\xymatrix{ p_{Y,*}j_{V\to Y,!} j_{V\to Y}^{*}p_{Y}^{*}1\ar[r]\ar@{..>}[d] &p_{Y,*}p_{Y}^{*} 1\ar[r]\ar[d] & p_{Y,*}i_{B\to Y,*}i_{B\to Y}^{*}p_{Y}^{*}1 \ar[d] \\ p_{X,*}j_{U\to X,!} j_{U\to X}^{*}p_{X}^{*}1 \ar[r]&p_{X,*}p_{X}^{*} 1\ar[r] & p_{X,*}i_{A\to X,*}i_{A\to X}^{*}p_{X}^{*}1    }\ , $$
  	where for the vertical maps we use relations like 
  	$p_{Y}f=p_{X}$  and units like
  	$\id\to f_{*}f^{*}$.
  	The dotted arrow is the equivalence 
  	$$p_{X,*}j_{U\to X,!}   j_{U\to X}^{*}p_{X}^{*}1\simeq p_{Y,*}f_{*}j_{U\to X,!}  j_{U\to X}^{*} f^{*}p_{Y}^{*} 1\stackrel{!}{\simeq} p_{Y,*} j_{V\to Y,! }f_{|U,*} f_{|U}^{*} j_{V\to Y}^{*}p_{Y}^{*}1\stackrel{!!}{\simeq}  p_{Y,*} j_{V\to Y,! } j_{V\to Y}^{*}p_{Y}^{*}1$$
  	where we used the mBC condition for the cartesian square  $$\xymatrix{U\ar[r]^{j_{U\to X}}\ar[d]^{f_{|U}} &X \ar[d]^{f} \\V \ar[r]^{j_{V\to Y}} &Y } $$
  	at the equivalence marked by $!$ and that $f_{|U}$  is a homeomorphism 
  	for the equivalence marked by $!!$.
  	% 
  	% Using that  mBC  the fibre of the second map can be rewritten in the form 
  	%$H_{D,c}(U):=p_{U,!} p_{U}^{*}1$, where $H_{D,c}(U)$ is called  the compactly supported cohomology of $U$. We thus get the fibre sequence
  	%$$H_{D,c}(U)\to H_{D}(X)\to H_{D}(Z)\ .$$
  	%If $g:X'\to X$ is a map of  compact Hausdorff spaces, then we set $Z':=g^{-1}(Z)$ and $U':=g^{-1}(U)$. We  assume that $g_{|U'}:U'\to U$ is a homeomorphism.
  	%Then we get a morphism of fibre sequences
  	%$$\xymatrix{H_{D,c}(U)\ar[r]\ar[d]^{\simeq}&H_{D}(X)\ar[r]\ar[d]^{g^{*}}&H_{D}(Z)\ar[d]^{g_{|Z}^{*}}\\H_{D,c}(U')\ar[r]&H_{D}(X')\ar[r]&H_{D}(Z')}\ .$$
  	%Strong excision  expresses the fact that in this geometric situation the left vertical map is always an equivalence.
  	%   \end{proof}
  %% 
  %  
  %\begin{lem}\label{kopthrthertgertg}
  %  If $D$ is compatible  with $I$  and satisfies canonical descent, then the restriction of  $\Gamma^{D}$ to compact Hausdorff spaces satisfies  closed excision. \end{lem} 
  %  \begin{proof} 
  %  By  localization (specializing \eqref{bwiojoievewrvfds}), and using $U:=X\setminus Y$ and $V:=Z\setminus Y\cap Z$  we get a map of fibre sequences (using $p_{Z}=p_{X}\circ i_{Z\to X}$ and the unit of $i_{Z\to X}^{*}\dashv  i_{Z\to X,*}$)
  %  $$\xymatrix{p_{X,*}j_{U\to X,!} j_{U\to X}^{*}p_{X}^{*}1\ar[r]\ar@{..>}[d]&p_{X,*}p_{X}^{*}1 \ar[r]\ar[d]&p_{X,*} i_{Y\to X,*}i_{Y\to X}^{*}p_{X}^{*}1\ar[d]  \\ p_{X,*} i_{Z\to X,*}j_{V\to Z,!}j_{V\to Z}^{*}i_{Z\to X}^{*}p_{X}^{*}1\ar[r]&p_{X,*} i_{Z\to X,*}i_{Z\to X}^{*}p_{X}^{*}1\ar[r]& p_{X,*}i_{Y\cap Z\to X,*} i_{Y\cap Z\to X}^{*}p_{X}^{*}1 }$$
  %  
  %   
  \end{proof}
  
  \begin{rem}\label[rem]{ghuwegerfwefwerf}
  The condition of closed excision for $H$ is the special case of strong excision applied to maps
  $(Z,Y\cap Z)\to (X,Y)$ for compact subspaces $Y$ and $Z$ of $X$. So strong excision implies closed excision.\hB
  \end{rem}

  Let  $H:\LCH^{\op} \to \cC $ be  some functor.    
  \begin{ddd}\label[ddd]{iugwoierwerfwefwef}\mbox{}
  \begin{enumerate}
  	\item We say that $H$ is finitary     if for every cofiltered system $(X_{i})_{i\in I}$   in $\CH$ 
  	with $X:=\lim_{i\in I}X_{i}$ we have
  	an equivalence
  	$$  \colim_{i\in I^{\op}} H(X_{i})\stackrel{ \simeq}{\to} H(X)  \ .$$ % induced by the maps $H_{D}(f_{i})$.
  	\item We say that  $H $ is homotopy invariant if for every $X$ in $\LCH$ 
  	the projection $p:[0,1]\times X\to X$ induces an equivalence $H(p):H(X)\stackrel{\simeq}{\to} H([0,1]\times X)$.
  \end{enumerate}
  \end{ddd}
  
  %Let $D$ be  as in   \eqref{bsdofjiovsfvsdfv}.   
  %
  %\begin{ddd}  \begin{enumerate}
  %\item We say that $D$ satisfies  profinite cohomology descent if
  %$H_{D}$  satisfies  profinite descent.
  %  \item We say that $D$ is cohomologically homotopy invariant if   $H_{D}$ is homotopy invariant. \end{enumerate}
%
%\end{ddd}
%

\begin{ex}\label[ex]{kopherhtgertg}
Assume that $E$ is in $\CAlg(\Pr_{\st}^{L})$.
Then we have the 
six functor  formalism $$\Shv(-,E):\LCH^{\op}\to \CAlg(\Pr^{L}_{\st})$$ on the Nagata context $(\LCH,I,P)$.
It satisfies canonical descent and is finitary, hence fd-determined (see \cref{jzztjrthtzhrzth} below)  by \cref{okhpetrhrferferferfetgerge}.\ref{ijgohwphrtherthrthe1}. It is furthermore
section-determined and  stalk-determined. 
Its associated cohomology theory $\Gamma^{\Shv(-,E)}$ is   homotopy invariant and  finitary. The latter assertion  is the second statement   in \cite[Cor. 3.6.10]{NKP}. 

We will  write $\hat f^{*}$, $\hat f_{!}$, $\hat \otimes$ etc for the six functor operations  on sheaves. 
\hB
\end{ex}

\begin{prop}\label[prop]{hkoptrhertgertger}
If a  six functor formalism $D$ on the Nagata context $(\LCH,I,P)$  is finitary, then  % \uli{and $D(\pt)$ is dualizable}\fuli{this is what they assume, can we drop this}, 
then its associated cohomology functor $\Gamma^{D}$
is also finitary.
\end{prop}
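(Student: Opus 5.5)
We have to show that for a cofiltered system $(X_{i})_{i\in I}$ in $\CH$ with limit $X$ and projections $f_{i}:X\to X_{i}$, the canonical map $\colim_{i\in I^{\op}}\Gamma^{D}(X_{i})\to\Gamma^{D}(X)$ is an equivalence in $D(\pt)$. The plan is to express everything through the endofunctor equivalence of \cref{hkoperthevsfdvftrgertge} and then push forward along $p_{X}$, using that $p_{X,*}$ is cocontinuous by P-ra, since $X$ is compact.

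First, write $p_{i}:=p_{X_{i}}$, so that $p_{X}=p_{i}\circ f_{i}$. We identify the structure map $\Gamma^{D}(X_{i})\to\Gamma^{D}(X)$ with
$$p_{X,*}f_{i}^{*}f_{i,*}p_{X}^{*}1\to p_{X,*}p_{X}^{*}1,$$
the map induced by the counit $f_{i}^{*}f_{i,*}\to\id$. To produce this identification we will compute
$$p_{X,*}f_{i}^{*}f_{i,*}p_{X}^{*}1\simeq p_{i,*}f_{i,*}f_{i}^{*}f_{i,*}f_{i}^{*}p_{i}^{*}1 .$$
The key claim is $f_{i,*}f_{i}^{*}p_{i}^{*}1\simeq f_{i,*}1_{X}$, and here we need something better than a bare unit map. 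The cleanest route is the projection formula P-pf for the proper map $f_{i}$, which gives $f_{i,*}f_{i}^{*}A\simeq f_{i,*}1\otimes A$. We then check, by triangle identities, that under these identifications the counit-induced map corresponds to $p_{i,*}$ of the unit $p_{i}^{*}1\to f_{i,*}f_{i}^{*}p_{i}^{*}1$, which is exactly $\Gamma^{D}(f_{i})$.

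A simpler way to organise this is to note directly that $p_{X,*}$ applied to the cone $f_{i}^{*}f_{i,*}p_{X}^{*}1\to p_{X}^{*}1$ gives the cone $\Gamma^{D}(X_{i})\to\Gamma^{D}(X)$. This works because
$$p_{X,*}f_{i}^{*}f_{i,*}p_{X}^{*}1\simeq p_{i,*}f_{i,*}f_{i}^{*}\bigl(f_{i,*}f_{i}^{*}p_{i}^{*}1\bigr).$$
Alternatively, we can avoid the double composite by observing that the composite $p_{i}^{*}1\to f_{i,*}f_{i}^{*}p_{i}^{*}1$ followed by $p_{i,*}$ is $\Gamma^{D}(f_{i})$. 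Along with the adjunction, this gives $\Gamma^{D}(f_{i})=p_{X,*}(\epsilon)\circ(\text{unit})$, where $\epsilon$ is the counit. I expect this coherence bookkeeping, namely showing that the colimit cone from \cref{hkoperthevsfdvftrgertge} maps to the cone defining finitariness of $\Gamma^{D}$, to be the main obstacle.

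Second, \cref{hkoperthevsfdvftrgertge} gives $\colim_{i}f_{i}^{*}f_{i,*}\simeq\id_{D(X)}$. Evaluating at $p_{X}^{*}1$ and applying the colimit-preserving functor $p_{X,*}$ yields
$$\colim_{i}p_{X,*}f_{i}^{*}f_{i,*}p_{X}^{*}1\simeq\Gamma^{D}(X).$$

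Third, to finish we need $p_{X,*}f_{i}^{*}f_{i,*}p_{X}^{*}1\simeq\Gamma^{D}(X_{i})$, compatibly with the transition maps. Using $p_{X}^{*}=f_{i}^{*}p_{i}^{*}$ and P-pf for $f_{i}$, we get $f_{i,*}f_{i}^{*}p_{i}^{*}1\simeq f_{i,*}1_{X}\otimes p_{i}^{*}1$. This is not in general $p_{i}^{*}1$, so the termwise identification fails, and the argument must instead be run through a cofinality trick. We use the system $(X_{j})_{j\ge i}$ and write $p_{i}^{*}1\simeq\colim_{j}f_{ij,*}f_{ij}^{*}$-type expressions, by applying finitariness to the system $X\to X_{i}$ itself. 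Concretely, we apply the above colimit formula to the object $f_{i}^{*}p_{i}^{*}1=1_{X}$ and instead show $\Gamma^{D}(X)\simeq\colim_{i}p_{i,*}f_{i,*}f_{i}^{*}f_{i,*}1_{X}$. We then compare this with $\colim_{i}p_{i,*}p_{i}^{*}1$ via the units $p_{i}^{*}1\to f_{i,*}1_{X}$. Finally, we check that the cofibres of these units vanish in the colimit, again using \cref{hkoperthevsfdvftrgertge} together with base change P-bc for the squares $X\to X_{j}\to X_{i}$.
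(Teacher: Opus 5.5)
There is a genuine gap. You build on the wrong colimit formula, and the repair in your last step is circular.

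Evaluating $\colim_{i}f_{i}^{*}f_{i,*}\simeq\id_{D(X)}$ from \cref{hkoperthevsfdvftrgertge} at $1_{X}$ and applying the cocontinuous functor $p_{X,*}$ is legitimate. However, P-bc for the cartesian square formed by $f_{i}$, $f_{i}$ and the two projections $\mathrm{pr}_{1},\mathrm{pr}_{2}:X\times_{X_{i}}X\to X$ gives $f_{i}^{*}f_{i,*}1_{X}\simeq\mathrm{pr}_{1,*}1_{X\times_{X_{i}}X}$. So what you actually obtain is $\Gamma^{D}(X)\simeq\colim_{i}\Gamma^{D}(X\times_{X_{i}}X)$, which says nothing directly about the $\Gamma^{D}(X_{i})$. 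In particular, the identification of cones in your first step is false, as you later concede.

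In the last step, note that the diagram $i\mapsto p_{i,*}f_{i,*}1_{X}$ is constant with value $\Gamma^{D}(X)$: its transition maps are identities by the triangle identities. Hence the claim that $p_{i,*}$ of the cofibres of the units $1_{X_{i}}\to f_{i,*}1_{X}$ vanishes in the colimit is equivalent to the proposition itself. It is asserted, not proved. The tool you name cannot supply it. P-bc only concerns cartesian squares, and the triangles $X\to X_{j}\to X_{i}$ are not of that form ($X\times_{X_{i}}X_{j}\neq X$ in general), so it gives no equivalence between $f_{ij}^{*}f_{i,*}$ and $f_{j,*}$.

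What is missing is the unit-side formula. The paper's brief proof defers to \cite[Lemma~2.5.10]{NKP}, arguing as in \cite[Proposition~3.6.10]{NKP}; the input needed is as follows.
\begin{enumerate}
\item All maps between compact Hausdorff spaces are proper, so by P-ra every $f_{ij,*}$ is cocontinuous.
\item Finitariness then exhibits $D(X)$ as a filtered colimit in $\Pr^{L}_{\st}$ along left adjoints whose right adjoints preserve colimits.
\item For such colimits, compute the colimit as the limit along the right adjoints and write the left adjoints of the projections as filtered colimits. This gives $f_{i,*}f_{i}^{*}\simeq\colim_{j\ge i}f_{ij,*}f_{ij}^{*}$ as endofunctors of $D(X_{i})$, compatibly with the units.
\item Evaluating at $1_{X_{i}}$ and applying the cocontinuous functor $p_{i,*}$ gives $\Gamma^{D}(X)\simeq\colim_{j\ge i}\Gamma^{D}(X_{j})$. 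The structure maps are $p_{j,*}$ of the units $1_{X_{j}}\to f_{j,*}1_{X}$, i.e.\ exactly the maps $\Gamma^{D}(f_{j})$.
\item Cofinality of $\{j\ge i\}$ finishes the proof.
\end{enumerate}

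Your phrase about ``$p_{i}^{*}1\simeq\colim_{j}f_{ij,*}f_{ij}^{*}$-type expressions'' gestures at this, but as written it is wrong: the object $\colim_{j\ge i}f_{ij,*}f_{ij}^{*}1_{X_{i}}$ should be $f_{i,*}1_{X}$, not $1_{X_{i}}$. The formula is also never derived. Moreover, it cannot be obtained from the counit-side identity $\colim_{i}f_{i}^{*}f_{i,*}\simeq\id$ alone, since that identity holds trivially for any cocone into the zero category. You must use that the $f_{i}^{*}$ form a colimit cocone in $\Pr^{L}_{\st}$.
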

\begin{proof}%[Proof of \cref{hkoptrhertgertger}] \phantomsection{}\label{goijwerogwreferfwefewrf}
For $D(-)=\Shv(-,D(\pt))$ this has been observed in \cref{kopherhtgertg}.       For general $D$, arguing as in \cite[Proposition 3.6.10]{NKP}, one can deduce that $\Gamma^D$ is finitary from \cite[Lemma. 2.5.10]{NKP}. %\uli{Achtung: They assume dualizablity of all values. This is more than we want to assume} 
%\fuli{need more details}
\end{proof}
  
\subsection{Stalks}

This subsection focuses on stalks. For each $D$ and $X$ in $\LCH$ with a point $x$ in $X$, we define the stalk functor as a pullback along the inclusion $i_x :\pt\rightarrow X$. We show that the usual colimit formula for stalks holds for any $D$ which is compatible with $I$ and $P$ and satisfies canonical descent. We introduce the condition of being stalk-determined for a six functor formalism $D$. This means that, for each $X$ in $\LCH$ which is hypercomplete, the family of stalk functors on $D(X)$ indexed by all points in $X$ is jointly conservative. We analyze the relation between being stalk-determined and section-determined. This will be important later to show that any continuous six functor formalism in the sense of \cite{arXiv:2507.13537} is equivalent to sheaves.

Consider a functor  $D:\LCH^{\op}\to \CAlg(\Pr^{L}_{\st})$.
  %Let  $D$ be a functor as in     \eqref{bsdofjiovsfvsdfv}. 
For $x$ in $X$ we let $i_{x}:\pt\to X$ denote the inclusion.
 \begin{ddd}\label[ddd]{okhprthegertgretg}   For $x$ in $X$ we 
 define the stalk at $x$ as the map
 $$i_{x}^{*} :D(X)\to D(\pt)\ .$$
   \end{ddd}

The following formula shows that, %\cref{okhprthegertgretg}
 when $D$ is sufficiently well-behaved, the usual formula computing stalks from sheaf theory is also valid for $D$.
\begin{lem} \label[lem]{triohjoertpghertgkl}If $D$ is  compatible with $I$ and $P$  and satisfies canonical descent, then
we have an equivalence \begin{equation}\label{gerfwerfrweffw}i_{x}^{*}\simeq \colim_{x\in V} \ev_{V} \ ,
\end{equation}  where the colimit runs over all open neighbourhoods of $x$.
\end{lem}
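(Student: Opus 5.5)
We prove the formula by first comparing $i_x^*$ with the values of the evaluation functors on small neighbourhoods, and then passing to the colimit using the exhaustion property applied to the complement of $\{x\}$. Each $\ev_V=p_{V,*}j_V^*$ receives a map to $i_x^*$: the unit $\id\to i_{x\in V,*}i_{x\in V}^*$ for the inclusion $i_{x\in V}:\pt\to V$ gives $p_{V,*}j_V^*\to p_{V,*}i_{x\in V,*}i_{x\in V}^*j_V^*\simeq i_x^*$. These maps are compatible with restriction along inclusions $V'\subseteq V$, so they assemble to a natural transformation $\colim_{x\in V}\ev_V\to i_x^*$. To show it is an equivalence, we restrict to a cofinal family: since $X$ is locally compact Hausdorff, the neighbourhoods $V$ containing a compact neighbourhood $K$ of $x$, with $K$ contained in some larger open, are cofinal.

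\textbf{Step 1: the fibre of $\ev_V\to i_x^*$.} Fix an open $V\ni x$ and work in $D(V)$ with $A:=j_V^*(-)$. Put $W:=V\setminus\{x\}$, which is open since points are closed. Localization gives the cofibre sequence \eqref{bwiojoievewrvfds}
\[ j_{W,!}j_W^*A\to A\to i_{x*}i_x^*A\ . \]
Applying $p_{V,*}$, the fibre of $\ev_V(A)\to i_x^*A$ is $p_{V,*}j_{W,!}j_W^*A$. It therefore suffices to show
\[ \colim_{x\in V} p_{V,*}j_{W_V,!}j_{W_V}^*j_V^*(-)\simeq 0\ , \quad W_V:=V\setminus\{x\}\ . \]

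\textbf{Step 2: reduction to the proper case.} For $V\subseteq K\subseteq V'$ with $K$ compact, the transition map $p_{V',*}j_{W_{V'},!}j_{W_{V'}}^*\to p_{V,*}j_{W_V,!}j_{W_V}^*$ factors through $p_{K,*}$ of the corresponding object on $K$. Here $p_K$ is proper, so mBC applies to the open embedding $K\setminus\{x\}\subseteq K$. Concretely, we factor through $p_{K,*}\,i_K^*\,j_{W_{V'},!}(\cdots)$ and use I-bc to rewrite $i_K^*j_{W_{V'},!}\simeq j_{K\setminus x,!}(\dots)$.

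\textbf{Step 3: the vanishing argument (main obstacle).} Vanishing of the colimit should follow from exhaustion. The opens $X\setminus \overline{V}$ exhaust $X\setminus\{x\}$ as $V$ shrinks, since $\bigcap\overline V=\{x\}$, and \eqref{gwerfwrgfdfg} gives
\[ \colim_V j_{X\setminus \overline V,!}j^*_{X\setminus\overline V}\simeq j_{X\setminus x,!}j^*_{X\setminus x}\ . \]
Combining this with the cofibre sequences $j_{X\setminus\overline V,!}j^*\to j_{W,!}j^*\to$ (the part supported on $\overline V\setminus\{x\}$), the problem reduces to showing that $\colim_V$ of the global sections of pieces supported on $\overline V\setminus\{x\}$ vanishes.

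The delicate point is commuting $p_{*}$ with this filtered colimit, since $p_*$ need not preserve colimits on noncompact $V$. This is why we insert compacts $K$: on $K$, P-ra makes $p_{K,*}$ cocontinuous. Exhaustion on $K\setminus\{x\}$ by the opens $K\setminus\overline{V}$ then gives
\[ \colim_V p_{K,*}j_{K\setminus\overline V,!}\simeq p_{K,*}j_{K\setminus x,!}\ , \]
which lets the terms telescope to zero in the interleaved colimit. Making this interleaving (pairs $V\subseteq K\subseteq V'$) coherent and checking that the connecting maps match are the steps I expect to require the most care.
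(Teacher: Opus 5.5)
Your Steps 1 and 2 are sound: the unit-induced map $\colim_{x\in V}\ev_V\to i_x^*$, the identification of its fibre on $V$ with $p_{V,*}j_{V\setminus\{x\},!}j_{V\setminus\{x\}}^*j_V^*$, and the factorization through a compact $K$ via I-bc all work. The gap is Step 3, which carries the whole content of the lemma. ``The terms telescope to zero in the interleaved colimit'' is asserted, not proved, and you leave the coherence and the matching of maps open yourself. As written, the reduction to ``global sections of pieces supported on $\overline V\setminus\{x\}$'' only restates the claim. It is also set up on $X$, where $p_{X,*}$ is not cocontinuous. Moreover, the map $j_{X\setminus\overline V,!}j^*\to j_{W,!}j^*$ needs $W=X\setminus\{x\}$, not the $W=V\setminus\{x\}$ of Step 1, since $X\setminus\overline V\not\subseteq V\setminus\{x\}$.

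The missing ingredient is a second use of localization, now for a closed subset $L\subseteq K$ rather than for the point. Fix a compact neighbourhood $K$ of $x$, and for compact $L$ put $\ev_L:=p_{L,*}i_L^*$, so that $\ev_K=p_{K,*}$ on $D(K)$. For $L=K\cap\overline V$, localization on $D(K)$ identifies the cofibre of $j_{K\setminus L,!}j_{K\setminus L}^*\to\id$ with $i_{L,*}i_L^*$. Hence $p_{K,*}j_{K\setminus L,!}j^*_{K\setminus L}\simeq\mathrm{fib}(\ev_K\to\ev_L)$. Combine this with your exhaustion identity and the cocontinuity of $p_{K,*}$. You get $\mathrm{fib}(\ev_K\to i_x^*)\simeq\colim_L\mathrm{fib}(\ev_K\to\ev_L)\simeq\mathrm{fib}(\ev_K\to\colim_L\ev_L)$, hence $\colim_L\ev_L\simeq i_x^*$ already for this one $K$. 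No interleaving of different compacts is needed, and cofinality between compact and open neighbourhoods inside $K$ finishes.

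That is essentially the paper's proof, which is organized so that the problem never arises. For compact $X$, the paper localizes once at $\{x\}\subseteq X$. It then uses exhaustion of $X\setminus\{x\}$ by the sets $X\setminus K$, together with localization for $K\subseteq X$, to get an identity of endofunctors of $D(X)$: $i_{x,*}i_x^*\simeq\colim_K i_{K,*}i_K^*\simeq\colim_V j_{V,*}j_V^*$, the last step by cofinality. Only then does it apply the single cocontinuous functor $p_{X,*}$, using $p_{X,*}i_{x,*}\simeq\id$ and $p_{X,*}j_{V,*}\simeq p_{V,*}$. Non-compact $X$ is handled by precomposing the formula for $X^+$ with $j_{X\to X^+,!}$ and using I-bc; restricting to a fixed compact neighbourhood of $x$ would work equally well. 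With this organization, your per-$V$ fibre computations in Steps 1--2 become unnecessary.
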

\begin{proof}
By localization and  using an  instance of \eqref{bwiojoievewrvfds} we have the fibre sequence
$$j_{X\setminus\{x\}\to X,!}j_{X\setminus \{x\}\to X}^{*}\to \id\to i_{x,*}i_{x}^{*}\ .$$
Since $X$ is locally compact Hausdorff we have the equality $X\setminus \{x\}=\bigcup_{x\in K} X\setminus K$, where $K$ runs over the compact subsets of $X$ which contain
an open neighbourhood of $x$.
By exhaustion, using an instance of  \eqref{gwerfwrgfdfg}, we get the equivalence
$$\colim_{x\in K}j_{X\setminus K\to X,!}j_{X\setminus K\to X}^{*}\simeq j_{X\setminus\{x\}\to X,!}j_{X\setminus \{x\}\to X}^{*}\ .$$
Using localization again we get 
$$i_{x,*}i_{x}^{*}\simeq \colim_{x\in K}i_{K\to X,*}i_{ K\to X}^{*}\ .$$
By a cofinality argument we  can   extend the index set  of the colimit first to all compact subspaces of $X$ which contain an open neighbourhood
of $x$ and then restrict to the open neighbourhoods. We get the equivalence
$$i_{x,*}i_{x}^{*}\simeq \colim_{x\in V}j_{V\to X,*}j_{ V\to X}^{*}\ .$$
We now apply $p_{X,*}$ and get 
$$i_{x}^{*}\simeq p_{X,*}\colim_{x\in V}j_{V\to X,*}j_{ V\to X}^{*}\ .$$
If $X$ is compact, then by P-ra (\cref{zpozhrtzhztjrzthtzhtrh}.\ref{zpozhrtzhztjrzthtzhtrh1}) the functor   $p_{X,*}$ 
preserves colimits and, using $p_{X,*}j_{V\to X,*}\simeq p_{V,*}$ we get the desired formula. 

 If $X$ is not compact, then we let $j_{X\to X^{+}}:X\to X^{+}$ be the inclusion into the one-point compactification.
We write $i_{x}^{+,*}$ for the stalk for $x$ considered as a point in $X^{+}$. Then by I-bc (\cref{okhpertgertgrtegetrhehhrt}.\ref{okhpertgertgrtegetrhehhrt111})
 we have $i_{x}^{+,*}j_{X\to X^{+},!}\simeq i_{x}^{*}$
 and $j_{V\to X^{+}}^{*}j_{X\to X^{+},!}\simeq j_{V\to X}^{*}$.
 So precomposing the already proven instance of the equivalence \eqref{gerfwerfrweffw} for $X^{+}$ with $j_{X\to X^{+},!}$ we get  the equivalence  \eqref{gerfwerfrweffw}.
   \end{proof}

\begin{rem}
Note that in the proof of \cref{triohjoertpghertgkl} we have not used the full power of the compatibility with P condition. We only used that $p_{X,*}$ is cocontinuous for  the projection maps
$p_{X}:X\to \pt$  for compact Hausdorff spaces. The same remark also applies to \cref{kohpertrtgertgrtgterg}
and \cref{okhpetrhrferferferfetgerge} below where the assumption of compatibility with P is added since their proofs use the formula  for the stalk from \cref{triohjoertpghertgkl}.
\hB
\end{rem}

A topological space $X$ is called hypercomplete if the topos $\Shv(X,\Spc)$ is hypercomplete in the sense of \cite[Sec. 6.5.2]{htt}. Hypercompleteness is equivalent to the condition
 that the family of stalks $(i_{x}^{*}:\Shv(X,\Spc)\to \Spc)_{x\in X}$   is  jointly conservative; see \cite[Lemma A.3.9]{HA}.

  %For $x$ in $X$ we let $i_{x}:\pt\to X$ denote the inclusion.
  %We consider a functor $D$ as in   \eqref{bsdofjiovsfvsdfv}.
 We consider a functor  $D:\LCH^{\op}\to \CAlg(\Pr^{L}_{\st})$.
\begin{ddd}\label[ddd]{kopherthrgertgtg} \mbox{}
 \begin{enumerate} \item
 We say that $D$ is stalk-determined  on $X$ if  the family of  stalks 
 $$(i_{x}^{*} :D(X)\to  D(\pt))_{x\in X}$$ 
 is  jointly conservative.
 \item We say that $D$ is stalk-determined if it is stalk-determined on   every hypercomplete locally compact Hausdorff space $X$.
  \end{enumerate}
 \end{ddd}

 \begin{lem}\label[lem]{kohpertrtgertgrtgterg}
 If $D$ is compatible with $I$ and $P$, satisfies canonical descent, and is stalk-determined on $X$, then it is section-determined on $X$.
 \end{lem}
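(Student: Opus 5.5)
Since all functors involved are exact, joint conservativity of the family $(\ev_{U})_{U\in\Open(X)}$ amounts to the following: an object $A$ of $D(X)$ with $\ev_{U}(A)\simeq 0$ for all $U$ in $\Open(X)$ is itself zero. To see this, apply the stalk formula of \cref{triohjoertpghertgkl}. For every point $x$ in $X$ it gives
\[ i_{x}^{*}A\simeq \colim_{x\in V}\ev_{V}(A)\ ,\]
where the colimit runs over the open neighbourhoods $V$ of $x$. By assumption every term in this colimit vanishes, so $i_{x}^{*}A\simeq 0$ for all $x$ in $X$.

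Stalk-determinedness on $X$ says exactly that the family $(i_{x}^{*})_{x\in X}$ is jointly conservative. Hence $A\simeq 0$. For a morphism $\phi:A\to B$ with all $\ev_{U}(\phi)$ equivalences, one applies the same argument to the fibre (or cofibre) of $\phi$. This works because $\ev_{U}=p_{U,*}j_{U\to X}^{*}$ is a composite of exact functors between stable categories, so it commutes with fibres.

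The only point needing care is the use of \cref{triohjoertpghertgkl}, whose hypotheses (compatibility with $I$ and $P$, and canonical descent) are exactly those assumed in the lemma, so there is no real obstacle. One should also check that the equivalence \eqref{gerfwerfrweffw} is natural in the object $A$. This naturality is what lets us pass from vanishing of sections to vanishing of stalks for the fibre. It holds because the equivalence was constructed as an equivalence of functors on $D(X)$.
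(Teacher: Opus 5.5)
Your proof is correct and is essentially the paper's argument: the stalk formula \eqref{gerfwerfrweffw} turns invertibility of all $\ev_U(h)$ into invertibility of all stalks $i_x^*(h)$, and stalk-determinedness on $X$ finishes the proof. The only difference is your detour through fibres, whereas the paper applies the formula directly to the morphism; in your fibre version only the objectwise formula is needed (a colimit of zero objects is zero), so your naturality remark is unnecessary.
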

 \begin{proof}
 Let $h:A\to B$ be a morphism in $D(X)$ and assume that $\ev_{U}(h)$ is an equivalence for every
 $U$ in $\Open(X)$.  In view of  \eqref{gerfwerfrweffw} we conclude that $i_{x}^{*}(h)$ is an equivalence for every $x$ in $X$. Since $D$ is stalk-determined on $X$, we conclude that $h$ is an equivalence.
 \end{proof}

  %We consider a functor $D$ as in   \eqref{bsdofjiovsfvsdfv}.
We consider a functor  $D:\LCH^{\op}\to \CAlg(\Pr^{L}_{\st})$.
\begin{ddd}\label[ddd]{jzztjrthtzhrzth}
 We say that $D$ is fd-determined if for every compact Hausdorff space $X$ the family  
 $(f:X\to Y)_{f}$ induces a jointly conservative family $(f_{*}:D(X)\to D(Y))_{f}$, where $f$ runs over all maps from $X$ to closed subspaces of $[0,1]^{N}$ for some $N$ in $\nat$.
\end{ddd}

\begin{rem}
Being fd-determined essentially means that for every $X$, the collection of pushforwards along maps from $X$  to finite-dimensional spaces is jointly conservative. \hB
\end{rem}
%  The following is a more precise version of \cref{wegwiojerogefwerfwref} from the introduction.
 %Let $D$ be a functor as in     \eqref{bsdofjiovsfvsdfv}.
 \begin{prop}\label[prop]{okhpetrhrferferferfetgerge}
  \mbox{}
  \begin{enumerate}
  \item \label[item]{ijgohwphrtherthrthe} If $D$  is compatible with $I$ and $P$, satisfies canonical descent, and is   fd-determined  and  stalk-determined, then it is section-determined.
  \item \label[item]{ijgohwphrtherthrthe1} If $D$ is finitary, %continuous
  then it is fd-determined.
  \end{enumerate}
\end{prop}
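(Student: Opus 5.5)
The plan is to prove \cref{okhpetrhrferferferfetgerge}.\ref{ijgohwphrtherthrthe1} first, since it is a pro-approximation argument, and then use fd-determinedness to reduce section-determinedness to spaces that are automatically hypercomplete.

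\emph{Part \ref{ijgohwphrtherthrthe1}.} Let $X$ be compact Hausdorff and let $C$ be the set of continuous maps $X\to[0,1]$. The evaluation map $X\to[0,1]^{C}$ is a closed embedding. For each finite subset $F\subseteq C$, let $X_{F}\subseteq[0,1]^{F}$ be the image of $X$ under the projection. Each $X_F$ is a closed subspace of a finite cube, and the family $(X_F)_F$ is a cofiltered system in $\CH$ under the projections. The canonical map $X\to\lim_{F}X_{F}$ is a continuous bijection of compact Hausdorff spaces, hence a homeomorphism. By finitariness and \cref{hkoperthevsfdvftrgertge}, the family $(f_{F,*})_{F}$ is jointly conservative. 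A fortiori, the family of all $f_*$ for maps $f$ from $X$ to closed subspaces of cubes is jointly conservative, which is fd-determinedness.

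\emph{Part \ref{ijgohwphrtherthrthe}, compact case.} Let $X$ be compact and let $h$ be a morphism in $D(X)$ with $\ev_U(h)$ an equivalence for all $U\in\Open(X)$. Take any $f:X\to Y$ with $Y$ closed in some $[0,1]^N$. For $W\in\Open(Y)$, apply P-bc to the cartesian square formed by $f^{-1}(W)\to W$ over $X\to Y$ (here $f$ is proper). Together with $p_{W,*}f|_{*}\simeq p_{f^{-1}W,*}$, this gives
\[
\ev_{W}(f_{*}h)\simeq p_{W,*}j_{W}^{*}f_{*}h\simeq p_{f^{-1}W,*}j_{f^{-1}W}^{*}h=\ev_{f^{-1}W}(h),
\]
which is an equivalence. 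The space $Y$ is a finite-dimensional compact metrizable space, hence hypercomplete; I will cite Lurie's result that spaces of finite covering dimension have hypercomplete sheaf topoi (HTT 7.2.1.12 with 7.2.3.6). So $D$ is stalk-determined on $Y$, and \cref{kohpertrtgertgrtgterg} shows it is section-determined on $Y$. Therefore $f_*h$ is an equivalence for every such $f$, and fd-determinedness gives that $h$ is an equivalence.

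\emph{General case.} Let $j:X\to X^{+}$ be the one-point compactification and take $h$ as above. For $V\in\Open(X^+)$, consider the cartesian square of open inclusions $V\cap X\to V$ over $X\to X^{+}$. Passing to right adjoints in the I-bc equivalence yields $j_{V}^{*}j_{*}\simeq j'_{*}j'^{*}$, where $j':V\cap X\to V$. Hence
\[
\ev_V(j_*h)\simeq p_{V,*}j'_{*}j'^{*}j_{V}^{*}\,\cdots\simeq\ev_{V\cap X}(h),
\]
which is an equivalence. By the compact case, $j_*h$ is an equivalence. Since $j^*j_*\simeq\id$, as $j_!$ is fully faithful by the recollement, $h$ is an equivalence.

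I expect the main obstacle to be the hypercompleteness input for closed subspaces of $[0,1]^N$. It is external to the paper, so it has to be cited carefully, and it is the only point where stalk-determinedness can actually be applied.
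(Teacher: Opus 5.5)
Your proof is correct and is essentially the paper's. Part 2 is the same pro-approximation $X\cong\lim_{F}X_{F}$ with $X_{F}\subseteq[0,1]^{F}$. Part 1 is the same reduction to compact $X$ via $X^{+}$, followed by pushing forward along maps to closed subspaces of cubes, using $\mathrm{ev}_{W}\circ f_{*}\simeq\mathrm{ev}_{f^{-1}(W)}$, hypercompleteness of those subspaces, and \cref{kohpertrtgertgrtgterg}.

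The deviations are minor: you get $\mathrm{ev}_{W}\circ f_{*}\simeq\mathrm{ev}_{f^{-1}(W)}$ from P-bc (valid since $X$ is compact) where the paper takes right adjoints in I-bc, and you treat the opens of $X^{+}$ containing $\infty$ explicitly, which the paper leaves implicit (in your formula the restriction should be along $V\cap X\to X$, not $V\cap X\to V$).
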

\begin{proof}%[Proof of \cref{okhpetrhrferferferfetgerge}]
%\phantomsection{}\label{okhpetrhretgerge} %For  \cref{okhpetrhrferferferfetgerge}.\ref{hkopertertg}
%let $X$ be a hypercomplete compact Hausdorff space.
%
%
%we use the localization
%
%
We start with Assertion \ref{ijgohwphrtherthrthe}.
%Assume that $D$ is compatible with $I$, satisfies canonical descent and  is continuous  and hyperlocal. 
Let $X$ be in $\LCH$. We must show that the collection of evaluations $(\ev_{U})_{U\in \Open(X)}$ is jointly conservative.
 By  localization   the functor $j_{X\to X^{+},*}$ is fully faithful. 
Since $\ev_{U}^{X}\simeq \ev_{U}^{X^{+}}\circ j_{U\to X^{+},*}
$ (we add the superscript in order to indicate   for which space the evaluation is considered) it suffices to show
that $(\ev^{X^{+}}_{U})_{U\in \Open(X^{+})}$ is jointly conservative.
We can therefore assume that $X$ is compact.

\begin{lem}\label[lem]{kohpertrtgertgetrg}
If $D$ is fd-determined and %section
stalk-determined on every closed subspace of $[0,1]^{N}$ for some $N$ in $\nat$,
then $D$ is section-determined.
\end{lem}
\begin{proof} By assumption,
the family of maps $f_{*}:D(X)\to D(Y)$, where $Y$ is a closed subspace of  $[0,1]^{N}$ for some $N$ in $\nat$,
is jointly conservative. For every such map and
open subset $V$ in $Y$, we have the pull-back
$$\xymatrix{U\ar[rr]^-{j_{U\to X}}\ar[d]_{f_{|U} }&&X\ar[d]^{f} \\ V\ar[rr]^-{j_{V\to Y}} &&Y} $$
with $U$ in $\Open(X)$.
By  I-bc   we have an equivalence
 $$ j_{U\to X,!}  f_{|U}^{*}\simeq f^{*}  j_{V\to Y,!} \ .$$
 Taking right-adjoints we get
 $$ j_{V\to X}^{*}   f_{*}\simeq f_{|U,*} j_{U\to X}^{*}\ .$$
Postcomposing with $p_{V,*}$ we conclude that
$$\ev_{V}\circ f_{*}\simeq \ev_{U} :D(X)\to D(\pt)\ .$$
We now use that $D$ is section-determined on $Y$ by assumption.
We see that the family $(\ev_{V}\circ f_{*})_{f:X\to Y,V\in \Open(Y)}$ is jointly conservative. This implies that $(\ev_{U})_{U\in \Open(X)}$ is jointly conservative.
\end{proof}

We can now finish the proof of Assertion \ref{ijgohwphrtherthrthe}.
Since closed subspaces $Y$  of $[0,1]^{N}$ are hypercomplete, according to our assumption
 $D$ is stalk-determined on those $Y$.  By \cref{kohpertrtgertgrtgterg} it is section-determined on those $Y$.
 The assertion now follows from \cref{kohpertrtgertgetrg}.

We now show Assertion \ref{ijgohwphrtherthrthe1}.  Let $X$ be a compact Hausdorff space.
We have the closed embedding $X\to [0,1]^{S}$, $x\mapsto (\phi(x))_{\phi\in S}$, where $S$ is the set of continuous functions
$\phi:X\to [0,1]$.  For every finite subset $F\subseteq S$ let $Y_{F}$ be the image of the composition $X\to [0,1]^{S}\to  [0,1]^{F}$, where the second map is the projection.  Then $ X\cong \lim_{F\subseteq S,|F|<\infty} Y_{F}$.   We let $f_{F}:X\to Y_{F}$ denote the structure maps. Finitariness %Continuity 
of $D$ implies,  as seen in \cref{hkoperthevsfdvftrgertge}, that
the family $(f_{F,*})_{F\subseteq S,|F|<\infty }$ is jointly conservative.
This implies that  $D$ is fd-determined.  
  \end{proof}

 %The proof of \cref{okhpetrhrferferferfetgerge} will be given in
%\cref{okhpetrhretgerge}.

\section{The main result}
          
The goal of this section is to give a proof of the main theorem of this paper, characterizing $\Shv(-, E)$ in terms of the properties introduced in the previous section. We deduce that any continuous six functor formalism $D$ in the sense of \cite{arXiv:2507.13537} must be equivalent to $\Shv(-, D(\pt))$.
 
\subsection{The transformation $\cB:\Shv(-)\rightarrow D$} 

Let $D$ be any coefficient system (see \cref{kohperthertgertg}). The goal of this subsection is to construct a map of coefficient systems $\Shv(-,D(\pt))\rightarrow D(-)$. To this end, %For that end 
we use the %sheaves 
sheaf-spectrum adjunction given in \cite{arXiv:2302.04069}.

We start by recalling some preliminary definitions. 
Recall that a poset is said to be a frame if it has all joins and finite meets, %and 
and finite meets distribute over joins.
A morphism of frames is a morphism of posets %which
that preserves finite meets and all joins.  We write $\Frame$ for the subcategory of $\Poset$ whose objects are frames and whose morphisms are frame morphisms.

The poset $\Open(X)$ of a topological space $X$ is a frame. 
If $f:X\to Y$ is a continuous map between topological spaces, %taking preimages
the preimage map $f^{-1}:\Open(Y)\to \Open(X)$ is a morphism of frames. 
%We equip frames with the cartesian symmetric monoidal structure.  \uli{remove what $\CAlg$}
This refines to a functor
$$\Open(-)^{-1}:\Top^{\op}\to  \Frame\ .$$ 
 Write $\Top_I$ for the non-full wide subcategory of $\Top$ %where
 whose morphisms are open inclusions. For any open inclusion $j:U\to X$, there is a fully faithful functor $j_{\sharp}:\Open(U)\to\Open(X)$ given by $j_{\sharp}(V):=V$. This refines to a functor 
 $$\Open(-)_{\sharp}:\Top_{I}\to  \Poset\ .$$ 

We are mostly interested in frames coming from idempotent coalgebras in presentably symmetric monoidal $\infty$-categories. Let $\cC$ be a %any
 symmetric monoidal $\infty$-category with unit $1$, and let $c:C\rightarrow 1$ be a %any 
 morphism. We say that $(C,c)$ is an idempotent coalgebra if the map $C\otimes c : C\times C\rightarrow C\otimes 1\simeq C$ is an %isomorphism
equivalence in $\cC$. We write $\ICA(\cC)$ for the full subcategory of $\cC_{/1}$ spanned by idempotent coalgebras. The main result of \cite[Sections 3.1 and 3.2]{arXiv:2302.04069} is that, when $\cC$ is presentably symmetric monoidal, $\ICA(\cC)$ is a frame. Moreover, $\ICA$ refines to a functor 
$$\ICA:\CAlg(\Pr^{L}_{\st})\to \Frame\ .$$ %We  
In the following lemma we observe that $\ICA$ %has functoriality
is functorial for functors that are slightly more general than the symmetric monoidal ones.

\begin{lem}\label[lem]{lem:ICAstrongoplax}
	Let $\cC$ and $\cD$ be two presentably symmetric monoidal $\infty$-categories. Suppose that $F:\cC\to\cD$ is a strong oplax symmetric monoidal functor. Then $F$ preserves idempotent coalgebras, and thus induces a functor $\ICA(F):\ICA(\cC)\to\ICA(\cD)$. Moreover, $\ICA$ extends to a functor $$\ICA: \CAlg(\Pr^{L})_{\mathrm{soplax}}\to\Poset.$$
\end{lem}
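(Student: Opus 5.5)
The plan is to verify the claims in order: first that $F$ sends idempotent coalgebras to idempotent coalgebras, then that the induced map on $\ICA$ is order-preserving, and finally that the construction is functorial.

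\emph{Step 1: idempotent coalgebras are preserved.} Let $c:C\to 1_{\cC}$ be an idempotent coalgebra. Since $F$ is oplax symmetric monoidal, it comes with a counit map $\epsilon:F(1_{\cC})\to 1_{\cD}$. We define the candidate coalgebra as $F(C)\xrightarrow{F(c)}F(1_{\cC})\xrightarrow{\epsilon}1_{\cD}$. We must check that $F(C)\otimes F(C)\to F(C)\otimes 1_{\cD}\simeq F(C)$ is an equivalence. For this, consider the diagram
\[F(C\otimes C)\xrightarrow{F(C\otimes c)}F(C\otimes 1)\simeq F(C).\]
The first map is an equivalence because $C$ is idempotent. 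By naturality of the oplax structure maps $F(X\otimes Y)\to F(X)\otimes F(Y)$, this diagram maps to
\[F(C)\otimes F(C)\to F(C)\otimes F(1)\to F(C)\otimes 1_{\cD}.\]
The oplax unitality axiom identifies the composite $F(C\otimes 1)\to F(C)\otimes F(1)\to F(C)\otimes 1$ with the unitor $F(C\otimes 1)\simeq F(C)$. Strongness makes the comparison $F(C\otimes C)\to F(C)\otimes F(C)$ an equivalence. By two-out-of-three, $F(C)\otimes F(C)\to F(C)$ is then an equivalence. Strongness is only needed for the product $C\otimes C$; the unit map $\epsilon$ need not be invertible, and this is exactly the situation of $j_{!}$ in \cref{ggjiwoergergewfrefwref}.

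\emph{Step 2: the induced map is a map of posets.} A morphism $C\to C'$ over $1$ is sent by $F$ to a morphism $F(C)\to F(C')$ over $F(1)$, and hence over $1_{\cD}$ after composing with $\epsilon$. So $F$ restricts to a functor $\cC_{/1}\supseteq\ICA(\cC)\to\ICA(\cD)\subseteq\cD_{/1}$. Since both sides are posets, this functor is just an order-preserving map.

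\emph{Step 3: functoriality.} The assignment $F\mapsto \ICA(F)$ should be made into a functor on $\CAlg(\Pr^{L})_{\mathrm{soplax}}$. Because $\Poset$ is a $1$-category, with mapping spaces that are sets, only the homotopy category matters. Compatibility with composition and identities holds because composites of oplax functors compose their unit maps $\epsilon$ and their structure maps, so $\ICA(G\circ F)=\ICA(G)\circ\ICA(F)$ on objects. Equivalent functors induce equal maps of posets, since the equivalence classes of objects correspond.

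The one point I expect to need care is making the construction $F\mapsto (\cC_{/1}\to\cD_{/1})$ precise inside the $\infty$-categorical framework. I would handle it by viewing oplax symmetric monoidal functors as lax functors between opposite categories, and invoking the standard functoriality of slices over the unit for lax functors, applied on opposites. Since the target is the $1$-category $\Poset$, this amounts to checking object-level equalities, which were verified in Steps 2 and 3.
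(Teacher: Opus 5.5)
Your proposal is correct and follows essentially the same route as the paper: the paper also equips $F(C)$ with the composite $F(C)\to F(1)\to 1$, compares the row $F(C)\otimes F(C)\to F(C)\otimes F(1)\to F(C)\otimes 1$ with $F(C\otimes C)\to F(C\otimes 1)\to F(C)$ via the strong oplax structure maps, and handles functoriality by noting that $\Poset$ is a $1$-category. Your use of the unitality axiom, so that strongness is only needed for $C\otimes C$, is a slight sharpening rather than a different argument.
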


\begin{proof}
	Let $c:C\to 1$ be an idempotent coalgebra in $\cC$. Let $d:F(C)\to 1$ be the composition of $F(c)$ %and
	with the map $F(1)\to 1$ given by the oplax structure of $F$. We claim that $(F(C),d)$ is an idempotent coalgebra. This follows by %contemplating 
	considering   the diagram
	$$\xymatrix@C=3em@R=3em{
		F(C)\otimes F(C) \ar[r]^{F(C)\otimes F(c)} \ar[d]_{\simeq}
		& F(C)\otimes F(1) \ar[r] \ar[d]_{\simeq}
		& F(C)\otimes 1 \ar[d]^{\simeq} \\
		F(C\otimes C) \ar[r]^{\simeq}_{F(C\otimes c)}
		& F(C\otimes 1) \ar[r]^{\simeq}
		& F(C),
		}$$
	which commutes by definition of an oplax functor. The last claim in the statement of the lemma is %checked 
	easily verified as $\Poset$ is a $1$-category.
\end{proof}

\begin{kor}\label[kor]{kor:ICADfunct}
	Let $D:\LCH^{\op}\to \CAlg(\Pr^{L}_{\st})$ be a functor. %as in \eqref{bsdofjiovsfvsdfv}. 
	The %association 
	assignment $X\mapsto \ICA(D(X))$ extends to a %contravariant
	 functor 
	$$\ICA(D(-))^*:\LCH^{\op}\to\Frame$$ which sends a morphism
	%where any continuous 
	$f$ in $\LCH$ %is mapped 
	to $\ICA(f^*)$. Write $\LCH_I$ for the non-full subcategory of $\LCH$ %where 
	whose  morphisms are open inclusions. If $D$ is additionally 
	localizing, then the association $X\mapsto \ICA(D(X))$ extends to a %covariant 
	functor 
	$$\ICA(D(-))_!:\LCH_I\to\Poset$$ which sends an
	%where any 
	open inclusion $j$ in $\LCH_{I}$ %is mapped 
	to $\ICA(j_!)$.
\end{kor}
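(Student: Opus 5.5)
The corollary follows by composing the given functors with the two versions of $\ICA$: the symmetric monoidal one from \cite{arXiv:2302.04069}, and the strong oplax one from \cref{lem:ICAstrongoplax}.

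\emph{Contravariant functor.} The functor $D:\LCH^{\op}\to \CAlg(\Pr^{L}_{\st})$ sends a morphism $f$ to the symmetric monoidal left adjoint $f^{*}$. Composing $D$ with $\ICA:\CAlg(\Pr^{L}_{\st})\to \Frame$ gives a functor $\LCH^{\op}\to\Frame$, which we call $\ICA(D(-))^{*}$. By construction it sends $f$ to $\ICA(f^{*})$, so nothing further is required here.

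\emph{Covariant functor.} Now assume that $D$ is localizing. The plan has three steps.

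\begin{enumerate}
\item Construct a functor $D_{!}:\LCH_{I}\to \CAlg(\Pr^{L})_{\mathrm{soplax}}$ sending $X$ to $D(X)$ and an open inclusion $j$ to $j_{!}$.

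To build it, restrict $D$ to $\LCH_{I}^{\op}$. By I-la (\cref{okhpertgertgrtegetrhehhrt}) every $j^{*}$ has a left adjoint $j_{!}$. Passing to left adjoints is functorial: take the functor classifying the cartesian fibration and pass to the cocartesian fibration of left adjoints, or use the equivalence $\Pr^{R}\simeq (\Pr^{L})^{\op}$ restricted to the morphisms that have left adjoints. This gives a functor $\LCH_{I}\to \Cat_{\infty}$ with $j\mapsto j_{!}$.

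For the monoidal structure, apply the doctrinal adjunction principle. The left adjoint of a symmetric monoidal functor is canonically oplax symmetric monoidal, and this is coherent because passing to adjoints gives an equivalence between symmetric monoidal right adjoints and oplax symmetric monoidal left adjoints. This lifts the functor to oplax symmetric monoidal functors.

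\item Check that the lift lands in $\CAlg(\Pr^{L})_{\mathrm{soplax}}$. Each $j_{!}$ is strong by \cref{ggjiwoergergewfrefwref}, which is where localization is used. Each $j_{!}$ is also colimit preserving, being a left adjoint. Since $\CAlg(\Pr^{L})_{\mathrm{soplax}}$ is the subcategory of these, the functor factors through it.

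\item Compose with $\ICA:\CAlg(\Pr^{L})_{\mathrm{soplax}}\to\Poset$ from \cref{lem:ICAstrongoplax} to obtain $\ICA(D(-))_{!}$.
\end{enumerate}

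\emph{Main obstacle.} The only non-formal point is the coherence in step 1, namely making $j\mapsto j_{!}$ into an honest functor into oplax symmetric monoidal categories. There is a shortcut: the target $\Poset$ is a $1$-category. It therefore suffices to produce the maps $\ICA(j_{!})$ and to check that $\ICA((j'j)_{!})=\ICA(j'_{!})\ICA(j_{!})$ and that identities are preserved. These hold because $(j'j)_{!}\simeq j'_{!}j_{!}$ by uniqueness of adjoints. This equivalence is compatible with the oplax structures, and in particular with the counit maps $F(1)\to 1$. Hence the induced coalgebra structures on $F(C)$ agree up to equivalence, and equivalent objects of $\ICA$ are equal in the poset. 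This short argument avoids the full $\infty$-categorical doctrinal adjunction, and I would present it that way.
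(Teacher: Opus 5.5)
Your proposal is correct and follows the paper's proof. The contravariant functor is obtained by composing $D$ with $\ICA:\CAlg(\Pr^{L}_{\st})\to\Frame$, and the covariant one by combining \cref{ggjiwoergergewfrefwref} (strongness of $j_{!}$ under localization) with \cref{lem:ICAstrongoplax}. Your reduction of the coherence to the identities $\ICA((j'j)_{!})=\ICA(j'_{!})\ICA(j_{!})$, using that $\Poset$ is a $1$-category, is the same reasoning the paper invokes in \cref{lem:ICAstrongoplax}, so it fills in a detail rather than changing the route.
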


\begin{proof}
	The first functoriality follows %comes 
	from the fact that $f^*$ is symmetric monoidal. When $D$ is localizing, the second functoriality is deduced by combining \cref{lem:ICAstrongoplax} with \cref{ggjiwoergergewfrefwref}.  
\end{proof}

Consider a functor  $D:\LCH^{\op}\to \CAlg(\Pr^{L}_{\st})$ %be a functor.Assume that $D$ is as in \eqref{bsdofjiovsfvsdfv} 
and recall the \cref{kohperthertgertg} of a coefficient system.

\begin{lem}\label[lem]{lem:frameopentocidem}
	Let $D$ be a coefficient system. Then, for any $X\in\LCH$, there is a morphism of frames $h_X:\Open(X)\to \ICA(D(X))$ defined by $h_X(U)\coloneqq j_{U\to X,!} p_{U}^{*}1$, where $p_U:U\to\pt$ is the unique map. Moreover, $h$ upgrades to natural transformations $$h:\Open(-)_{\sharp}\to\ICA(D(-))_! \quad \text{and} \quad  h:\Open(-)^{-1}\to\ICA(D(-))^{*}. $$ 
\end{lem}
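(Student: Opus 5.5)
We first check that $h_X(U)=j_{U\to X,!}p_U^*1$, with counit $j_{U\to X,!}j_{U\to X}^*1_X\to 1_X$ (using $p_U^*1\simeq j_{U\to X}^*p_X^*1=j_{U\to X}^*1_X$), is an idempotent coalgebra. This is \cref{lem:ICAstrongoplax} applied to the strong oplax functor $j_{U\to X,!}$ of \cref{ggjiwoergergewfrefwref} and the trivial idempotent coalgebra $1_U\to 1_U$. The next steps are as follows.

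\textbf{Covariant naturality.} For $V\subseteq U\subseteq X$ we have $j_{U\to X,!}h_U(V)\simeq h_X(V)$ because $j_{V\to X,!}\simeq j_{U\to X,!}j_{V\to U,!}$. This gives naturality of $h:\Open(-)_\sharp\to \ICA(D(-))_!$ on objects, and since $\Poset$ is a $1$-category nothing more is required. The same formula also shows that $h_X$ is monotone: the map $h_X(V)\to 1_X$ factors through $h_X(U)$ via $j_{U\to X,!}$ of the counit $j_{V\to U,!}1_V\to 1_U$.

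\textbf{Contravariant naturality.} For $f:Y\to X$ we need $f^*h_X(U)\simeq h_Y(f^{-1}U)$ over $1_Y$. This is I-bc applied to the cartesian square with $f^{-1}U\to Y$ and $U\to X$, giving $f^*j_{U\to X,!}1_U\simeq j_{f^{-1}U\to Y,!}f_{|}^*1_U$, together with $f^*1_X\simeq 1_Y$.

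\textbf{Frame morphism, finite meets.} The top element is preserved since $h_X(X)=1_X$. The bottom element goes to $0$, which is the bottom of $\ICA$, by (zero): $h_X(\emptyset)=j_{\emptyset,!}(\text{object of }D(\emptyset)\simeq 0)=0$. In $\ICA(\cC)$ the meet is $C\otimes C'$. So we must show $h_X(U)\otimes h_X(V)\simeq h_X(U\cap V)$. Using the projection formula I-pf, we get $j_{U,!}1_U\otimes j_{V,!}1_V\simeq j_{U,!}(j_U^*j_{V,!}1_V)$. By I-bc for the square with $U\cap V$, this is $j_{U,!}j_{U\cap V\to U,!}1\simeq h_X(U\cap V)$. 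One checks that the resulting map is compatible with the maps to $1_X$.

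\textbf{Joins.} This is the main obstacle. Arbitrary joins reduce to finite joins together with filtered joins. For filtered unions, the exhaustion equivalence \eqref{gwerfwrgfdfg} applied to $1_X$ gives $\colim_i h_X(U_i)\simeq h_X(U)$. Joins of a filtered diagram in $\ICA$ are computed as colimits in $\cC_{/1}$; I expect this is the description in \cite{arXiv:2302.04069}, where colimits of idempotent coalgebras are again idempotent. For binary joins $U\cup V$, work in $D(U\cup V)$, after pushing forward with $j_!$, which preserves joins since it preserves colimits and is strong oplax. There the join of $C,C'$ in $\ICA$ is the pushout $C\sqcup_{C\otimes C'}C'$. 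We therefore need the pushout square
$$h(U\cap V)\to h(U),\ h(V)\to 1_{U\cup V}$$
to be cocartesian. To test this, apply the jointly conservative pair $(j_U^*,i^*)$, where $i$ is the inclusion of the complement $Z$ of $U$ in $U\cup V$. Under $j_U^*$ the square becomes $j^*_{U\cap V\to U}$-type data: $h_U(U\cap V)\to h_U(U\cap V)$ on one side and $1\to 1$ on the other, so it is a pushout. Under $i^*$ the terms $h(U)$ and $h(U\cap V)$ vanish by $i^*j_!\simeq 0$. What remains is $i^*h(V)\to 1_Z$, and since $Z\subseteq V$, base change gives $i^*j_{V,!}1\simeq j_{Z\to Z,!}1=1_Z$, so this map is an equivalence. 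Hence the square is cocartesian, and both naturalities of the frame maps follow from the previous steps.
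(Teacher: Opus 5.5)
Your proposal is correct and follows essentially the same route as the paper. You get idempotence from the strong oplax functor $j_!$, both naturalities from composability of $j_!$ and I-bc, and meets from I-pf plus I-bc. Joins are split into filtered ones (via the exhaustion formula) and binary ones, checked on a jointly conservative pair $(j^*,i^*)$ from localization; you take the complement of $U$ where the paper takes the complement of $V$, which is symmetric. Your explicit check that $h_X(\emptyset)\simeq 0$ via the (zero) axiom is a small point that the paper's ``binary (and hence finite) joins'' leaves implicit.
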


\begin{proof}
	%For $U$ in $\Open(X)$ let  $j_{U\to X}$ denote the embedding and $p_{U}:U\to *$ denote the projection.  % \uli{make  oservation after definition of localization.Since $j_{U\to X}^{*}$ is  symmetric monoidal, its 
		% left-adjoint $j_{U\to X,!}$    is op-lax symmetric monoidal and therefore preserves  coalgebras.}
	%It actually 
	The functor $p_{U}^{*}$ is symmetric monoidal and $1_{U}\simeq p_{U}^{*}1$ is the tensor unit of $D(U)$, where $1$ denotes the tensor unit of $D(\pt)$.  The tensor unit  in a symmetric monoidal $\infty$-category is clearly an idempotent coalgebra. As observed in \cref{kor:ICADfunct}, the functor $j_{U\to X,!}$ preserves idempotent coalgebras.
	%For every $U$ in $\Open(X)$ we can  thus define the idempotent coalgebra
	%$$h(U):=j_{U\to X,!} p_{U}^{*}1_{U}$$ in $D(X)$.
	If $V$ is a another %second 
	open subset of $X$ with $U\subseteq V$, then we have a map of coalgebras 
	$$ h(U)\simeq  j_{U\to X,!}p_{U}^{*}1\simeq  j_{V\to X,!} j_{U\to V,!}  j_{U\to V}^{*} p_{V}^{*} 1 \to  j_{V\to X,!}   p_{V}^{*}1\simeq h(V)$$ involving the counit of the adjunction $ j_{U\to V,!} \dashv  j_{U\to V}^{*}$.
	Therefore, we find that $h$ %gives 
	yields a well-defined functor $\Open(X)\to \ICA(D(X))$.
	
	We first discuss naturality of $h$. For   open inclusions $V\subseteq U\subseteq X$, we have $ j_{U\to X,!} j_{V\to U,!} 1_{V}\simeq  j_{V\to X,!}  \hat 1_{V}$, which produces the naturality $\Open(-)_{\sharp}\to\ICA(D(-))_!$. 
	If $f:X\to Y$ is a map in $\LCH$,
	then the  square  \begin{equation}\xymatrix{ \Open(Y)\ar[r]^-{h_{Y}}\ar[d]^{f^{-1}} & \ICA(D(Y))\ar[d]^{f^{*}} \\\Open(X) \ar[r]^-{h_{X}} &\ICA(D(X)) } 
	\end{equation} commutes. To this end, we calculate for $U$ in $\Open(Y)$ using I-bc (\cref{okhpertgertgrtegetrhehhrt}.\ref{okhpertgertgrtegetrhehhrt111})  that
	$$h_{X}(f^{-1}(U))\simeq j_{f^{-1}(U)\to X,!} 1_{f^{-1}(U)}\simeq   j_{f^{-1}(U)\to X,!}  f_{|f^{-1}(U)}^{*}1_{U} \simeq  f^{*} j_{U\to X,!}  1_{U} \simeq f^{*}h_{Y}(U).
	$$
	%We thus  get a natural transformation of functors 
	%\begin{equation}\label[equation]{freoijoiwergwregrrwreg} h: \Open(-)\to \ICA(D(-)):\LCH^{\op}\to \Poset\ .\end{equation} 
	
	%We now check that the map $h$ from
	%\eqref{freoijoiwergwregrrwreg}  refines to  a natural transformation of functors 
	%\begin{equation}\label{freoijoiwergwregwreg} h: \Open(-)\to \ICA(D(-)):\LCH^{\op}\to \Frame\ .\end{equation}
	To conclude the proof, %we are only left
	it remains to show that for all $X\in\LCH$, $h_X$ is a map of frames, i.e. it preserves finite meets and all joins.
	First, observe that $h_X(X)=1_X$, and so $h_{X}$ preserves final objects. Let $U,V$ be in $\Open(X)$. Using the  I-bc   and  I-pf (\cref{okhpertgertgrtegetrhehhrt}.\ref{okhpertgertgrtegetrhehhrt1111})  we get
	$$   j_{U\to X,!} 1_{U}\otimes_{X}  j_{V\to X,!} 1_{V}
	\simeq  j_{U\to X,!}   j_{U\to X}^{*} j_{V\to X,!} 1_{V}\simeq j_{U\to X,!}    j_{U\cap V\to U,!}   1_{U\cap V}$$ which 
	shows $h(U)\otimes_{X} h(V)\simeq h(U\cap V)$. Hence $ h_{X}$ preserves binary (and hence finite) meets. 
	To show that $h_{X}$ preserves binary (and hence finite) joins, we must further show that $$ \xymatrix{ h(U\cap V)\ar[r]\ar[d] &h(U) \ar[d] \\h(V) \ar[r] & h(U\cup V)} $$
	is a push-out. Using composability of lower shrieks and the fact that $j_{U\cup V\to X, !}$
	preserves%\footnote{\textcolor{blue}{we only need preserves. We check there is a pushout in $D(U\cup V)$ and then shriek it to $D(X)$}. I would leave everything as I wrote it}\fuli{why? Since it it fully faithful?  This is only for notational convenience. We could stick with $U\cup V$ and avoid this discussion}
	 pushouts, we may assume that $U\cup V=X$. Let $i_{X\setminus V\to X}:X\setminus V\to X$ denote  the inclusion of the closed complement of $V$ into $X$. By localization,  the pair $(j_{V\to X}^*, i_{X\setminus V\to X}^*)$ is jointly conservative. Since both functors preserve pushouts, it suffices to prove the above square becomes a pushout after applying each functor individually.

	By $I$-bc, applying $j_{V\to X}^*$ turns both horizontal arrows into equivalences,
	making the  resulting square  a pushout. Applying $i_{X\setminus V\to X}^*$, the left
	vertical arrow becomes an equivalence (since its domain and codomain both vanish by $I$-bc)
	and the right vertical arrow becomes an equivalence (since $U\setminus V = X\setminus V$).
Thus the resulting square is again a pushout.
	%we see that after the both become s, and thus theBy $I$-bc again, we see that after applying $i_{X\setminus V\to X}^*$ 
	
	% we get a pushout since the left vertical arrow is an equivalence since domain and codomain are both isomorphic to the zero object, and the right vertical arrow becomes an equivalence since $U\setminus V = X\setminus V$.
	%We show that the two vertical arrows induce an equivalence on the cofibers of the two horizontal arrows. By localization and \eqref{bwiojoievewrvfds}, this amounts to proving that the map $$j_{U\to X, !}i_{U\setminus (U\cap V)\to U, *}1_{U\setminus (U\cap V)}\to i_{X\setminus V\to X, *}1_{X\setminus V}$$ 
	%is an equivalence. 
	
	 For a   filtered  family 
	$(U_{i})_{i\in I}$  in $\Open(X)$ with $U=\bigcup_{i\in I}U_{i}$ we have
	$$\colim_{i\in I} h(U_{i})\simeq h(U)$$
	using \eqref{gwerfwrgfdfg}. Hence $h_{X}$ is a frame map, and the proof is finished.
\end{proof}

We are now ready to construct our comparison transformation.

%For the other naturaliy  But and  using the  description \eqref{fwerijfowerfwerfwrf} of $\cB_{X}$
%in the marked equivalence we get 
%$$\cB_{X}\hat j_{U\to X,!}\hat j_{V\to U,!}\hat 1_{V} \simeq \cB_{X}\hat j_{V\to X} \hat 1_{V}\stackrel{!}{\simeq}
%j_{V\to X,!}  \cB_{V} \hat 1_{V}\simeq     j_{U\to X,!} j_{V\to U,!} \cB_{V} \hat 1_{V} \stackrel{!}{\simeq}  j_{U\to X,!} \cB_{U}\hat  j_{V\to U,!} \hat 1_{V} \ . $$

    \begin{prop}\label[prop]{woijgoerferfwrg}
    If $D$ is a coefficient system, then 
    there exists a canonical morphism of
    coefficient systems 
   \begin{equation}\label{boijoidfgbdfgbdgfb}\cB:\Shv(-,D(\pt))\to D(-)\ .
\end{equation} 
    \end{prop}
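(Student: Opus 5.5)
We construct $\cB_X$ from the frame map $h_X:\Open(X)\to\ICA(D(X))$ of \cref{lem:frameopentocidem}, using the sheaf--spectrum adjunction of \cite{arXiv:2302.04069}. Write $E:=D(\pt)$. The universal property we will use is the following: for $\cC$ in $\CAlg(\Pr^{L}_{\st})$, symmetric monoidal left adjoints $\Shv(X,\Spc)\to\cC$ (equivalently $\Shv(X,\Sp)\to\cC$ after stabilization) correspond to frame maps $\Open(X)\to\ICA(\cC)$. This holds for $X$ locally compact Hausdorff, where $\Open(X)$ is a (locally) compact frame and $\Shv(X,\Sp)$ is dualizable. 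Applying this to $h_X$ gives a symmetric monoidal left adjoint $\Shv(X,\Sp)\to D(X)$ sending the representable sheaf $\Sigma^\infty_+ y(U)\simeq \hat j_{U\to X,!}\hat 1_U$ to $h_X(U)=j_{U\to X,!}1_U$.

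Next we incorporate the coefficients $E$. Using $\Shv(X,E)\simeq \Shv(X,\Sp)\otimes E$ and the symmetric monoidal functor $p_X^{*}:E\to D(X)$, the tensor product in $\CAlg(\Pr^{L}_{\st})$ (the coproduct) yields
\[
\cB_X:\Shv(X,E)\simeq\Shv(X,\Sp)\otimes E\to D(X).
\]

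Naturality in $X$ for pullbacks comes from the second naturality statement $h:\Open(-)^{-1}\to\ICA(D(-))^{*}$ of \cref{lem:frameopentocidem}, together with the compatibility $f^{*}p_Y^{*}\simeq p_X^{*}$. To obtain a coherent natural transformation rather than just objectwise data, we make the adjunction functorial. The assignment $\cC\mapsto\ICA(\cC)$ is a functor $\CAlg(\Pr^{L}_{\st})\to\Frame$, and $\Open(-)\mapsto \Shv(-,\Sp)$ is its partial left adjoint on spatial frames of locally compact spaces. Hence the natural transformation $h:\Open(-)^{-1}\to \ICA(D(-))^{*}$ of functors $\LCH^{\op}\to\Frame$ transposes to a natural transformation $\Shv(-,\Sp)\to D(-)$ of functors $\LCH^{\op}\to\CAlg(\Pr^{L}_{\st})$. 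Since $\Poset$ is a $1$-category, naturality of $h$ involves no higher coherences. Tensoring with the constant functor $E$, mapped in via $p^{*}$, gives $\cB$.

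It remains to check compatibility with $I$: for each open $j:U\to X$, the Beck--Chevalley map $j_!\cB_U\to\cB_X\hat j_!$ must be an equivalence. Both sides preserve colimits, and $\Shv(U,E)$ is generated under colimits by objects $\hat j_{V\to U,!}\hat 1_V\otimes e$ with $V\in\Open(U)$ and $e\in E$. On such a generator, both sides evaluate to $j_{V\to X,!}1_V\otimes p_X^{*}e$. This uses composability of shrieks, the projection formula I-pf to move $p^{*}e$ across $j_!$, and the naturality $h:\Open(-)_\sharp\to\ICA(D(-))_!$ from \cref{lem:frameopentocidem}. One must also verify that the Beck--Chevalley map itself is the identification on generators. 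This follows by unwinding the counit $j_!j^{*}\to\id$ on idempotent coalgebras, where the corresponding maps are unique because $\ICA$ is a poset.

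The main obstacle is the coherent naturality in $X$: promoting the objectwise universal property to a natural transformation of $\CAlg(\Pr^{L}_{\st})$-valued functors. The plan is to invoke the adjunction of \cite{arXiv:2302.04069} as an honest (partial) adjunction between $\Frame$ and $\CAlg(\Pr^{L}_{\st})$, so that transposition is functorial.
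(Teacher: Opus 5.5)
Your proposal is correct and follows essentially the paper's route. The paper likewise transposes the frame map $h$ through the sheaves--spectrum adjunction of \cite{arXiv:2302.04069}, packaged as the composite adjunction with $-\otimes D(\pt)\dashv \mathrm{fgt}$ into $\CAlg(\Pr^{L}_{\st})_{D(\pt)/}$ (equivalent to your tensoring with $p^{*}$), and it checks compatibility with $I$ on the generators $\hat j_{V\to U,!}\hat 1_V$ via the covariant naturality of $h$ and the poset argument for the Beck--Chevalley map; note also that this adjunction is an honest one on all of $\Frame$, so your restriction to locally compact spaces and partial adjoints is unnecessary.
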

     \begin{proof}%[Proof of \cref{woijgoerferfwrg}]\phantomsection{}\label{koperghertgertgetrg}
  
% 
% 
%For $\cC$ in $\CAlg(\Pr^{L}_{\st})$ the poset $\ICA(\cC)$ is also a frame 
%
%
%We now check that the map $h$
%Thus \eqref{freoijoiwergwregrrwreg}  refines to  a natural transformation of functors 
% \begin{equation}\label{freoijoiwergwregwreg} h: \Open(-)\to \ICA(D(-)):\LCH^{\op}\to \Frame\ .\end{equation} 
%Furthermore, using the  I-bc   and  I-pf   one checks that 
%$$h(U)\otimes_{X} h(V)\simeq   j_{U\to X,!} 1_{U}\otimes_{X}  j_{V\to X,!} 1_{V}
%\simeq  j_{U\to X,!}   j_{U\to X}^{*} j_{V\to X,!} 1_{V}\simeq j_{U\to X,!}    j_{U\cap V\to U,!}   1_{U\cap V}\simeq
% h(U\cap V)\ .$$
%
%Similarly, the functor $\ICA$ refines to a functor 
%$$\ICA:\CAlg(\Pr^{L}_{\st})\to \CAlg(\Frame)\ .$$
%Thus \eqref{freoijoiwergwregrrwreg}  refines to  a natural transformation of functors 
% \begin{equation}\label{freoijoiwergwregwreg} h: \Open(-)\to \ICA(D(-)):\LCH^{\op}\to \CAlg(\Frame)\ .\end{equation} 

   Recall  the  sheaf-spectrum adjunction \cite[Thm A]{arXiv:2302.04069}:
 %  There exists an adjunction 
    \begin{equation}\label{freoijoiwergwregwreg} \Shv(-,\Sp):\Frame \leftrightarrows   \CAlg(\Pr^{L}_{\st}):\ICA(-)\ .\end{equation} 
    Composing with the adjunction
    \begin{equation*} -\otimes D(\pt):%\Frame 
      \CAlg(\Pr^{L}_{\st}) \leftrightarrows   \CAlg( \Pr^{L}_{\st} )_{D(\pt)/}:\mathrm{fgt}\ \end{equation*}
   yields a new adjunction whose counit is a natural transformation 
    $$ \Shv(\ICA\circ\mathrm{fgt}(-),D(\pt))  \simeq \Shv(\ICA\circ\mathrm{fgt}(-),\Sp )\otimes D(\pt) \to  \id$$
    of endofunctors of  $\CAlg( \Pr^{L}_{\st} )_{D(\pt)/}$. By  precomposing  with $D(-)$,  we get a natural transformation
\begin{equation}\label{hertokopkhpertgertge} \Shv(\ICA\circ \mathrm{fgt}(D(-)) ,D(\pt)) \to D(-)\ .
\end{equation}% \uli{and precompsing the result with $D(-)$}
% we obtain a new one, whose counit gives a natural transformation
Applying $\Shv(-,D(\pt))$ to the map of frames constructed in \cref{lem:frameopentocidem}, we obtain the desired natural transformation $$\cB:\Shv(- ,D(\pt))\to \Shv(\ICA\circ \mathrm{fgt}(D(-)),D(\pt)) \stackrel{\eqref{hertokopkhpertgertge}}{\to} D(-)$$ of functors from $\LCH^{\op}$ to $\CAlg( \Pr^{L}_{\st} )_{D(\pt)/}$.
    
    %We apply the symmetric monoidal functor $\Shv(- ,\Sp)\otimes D(\pt)$ to the transformation \eqref{freoijoiwergwregwreg}, use
    %$$\Shv(X,D(\pt))\simeq \Shv(X,\Sp)\otimes D(\pt)\ ,$$  and compose with the counit of the adjunction \eqref{freoijoiwergwregwreg}  in order to get the desired $D(\pt)$-linear transformation \fuli{improve by working in $\Pr^{L}_{D(\pt)/}$}
    %$$\cB:\Shv(- ,D(\pt))\to \Shv(\ICA(D(-)))\otimes D(\pt)\to D(-)$$ of functors from $\LCH^{\op}$ to $\CAlg( \Pr^{L}_{\st} )_{D(\pt)/}$.

% We used this abstract machine in order to  write down a coherent definition of the transformation $\cB$.
 % If we fix $X$ in $\LCH$, then we 
 %have the diagram of functors with values in    $ \CAlg(\Cat_{\infty})$  \begin{equation}\label{fwerijfowerfwerfwrf}\xymatrix{\Open(X)\ar[r]^-{h}\ar[dr]_{U\mapsto \hat j_{U\to X,!}\hat 1_{U}}&\ICA(D(X))\ar[r]^-{\incl}& D(X)\\&\Shv(X,D(\pt))\ar@{..>}[ur]_{\cB_{X}}&}\ ,
%\end{equation}
%where $\cB_{X}$ is the initial $D(\pt)$-linear functor making this diagram commutative.
% left Kan extension of the upper line along the left diagonal arrow.

%By construction $\cB$  for every $X$ in $\LCH$ the component $\cB_{X}$
%left-adjoint. 
%Hence
%we get\fuli{improve on this point} the desired  natural transformation of $D(\pt)$-linear functors 
%$$\cB:\Shv(-,D(\pt))\to D(-):\LCH^{\op}\to \CAlg(\Pr^{L}_{\st})\ .$$\fuli{For a better argument giving coherence use \cite{arXiv:2302.04069}.}
%In order 
To show that 
$\cB$  is a morphism of coefficient systems, we must show that it is compatible with $I$ (\cref{kohperhretgertge}.\ref{thokerpthertgertge}).
 Let $j_{U\to X}:U\to X$ be an open inclusion.
 We must show that the Beck-Chevalley map 
 $ j_{U\to X,!}\cB_{U}\to \cB_{X}\hat j_{U\to X,!}$ is an equivalence.
 By $D(\pt)$-linearity and since the components of $\cB$ preserve colimits it suffices to show that this is an  equivalence when evaluated on the objects  $  \hat j_{V\to U,!}  \hat 1_{V}$ in $\Shv(U,D(\pt))$ for all $V$ in $\Open(U)$. This follows from the naturality $h$ shown in \cref{lem:frameopentocidem} with respect to the covariant functoriality on open inclusions. \footnote{Since the isomorphism produced in the lemma is in the poset of idempotent coalgebras, it necessarily coincides with the Beck-Chevalley map.}. 
    \end{proof}
  
    \begin{rem} 
    Recall the universal property of cosheaves 
    $$\Fun^{\colim}_{D(\pt)}(\Shv(X,D(\pt)),D(X)) \stackrel{\simeq}{\to} \CoSh(X,D(X))$$
  which is given by %the 
  restriction along the functor $\Open(X)\to \Shv(X,D(\pt)) $, $U\mapsto \hat{j}_{U\to X,!}\hat{1}_{U}$.  By construction, the functor   $\cB_{X}:\Shv(X,D(\pt))\to D(X)$ in \eqref{boijoidfgbdfgbdgfb} is determined by the cosheaf
   $U\mapsto h(U):=j_{U\to X,!}1_{U}$.
%    
%    For $X$ in $\LCH$
%    we consider the functor $\Open(X)\to D(X)$ given by
%    $$U\mapsto j_{U\to X,!}1_{U}\ ,$$
%    where $j_{U\to X}:U\to X$ is the inclusion.
%    Since $D$ satisfies canonical descent, this functor is a $D(X)$-valued cosheaf.
%    The functor $\cB_{X}:\Shv(X,D(\pt))\to D(X)$
%    is the one determined by this cosheaf and the universal property\fuli{reference} of cosheaves
%    $$\Fun^{\colim}(\Shv(X,D(\pt)),D(X))\simeq \CoSh(X,D(\pt))\ .$$  In particular we have equivalences
%    $$\cB_{X}(\hat j_{U\to X,!}     \hat 1_{X} )\simeq j_{U\to X,!}1_{X}\ .$$
 \hB
        \end{rem}

\subsection{Properties of $\cB$}

We assume that   $D:\LCH^{\op}\to \CAlg(\Pr^{L}_{\st})$ is a coefficient system.
In this subsection, we study some properties of $\cB$ from \eqref{boijoidfgbdfgbdgfb} that will be essential in the proof of the main theorem.

\begin{ddd}\label[ddd]{koh0perthertgetge}
For every $X$ in $\LCH$, the right-adjoint   $\cK_{X}$ in
% we let $\cK_{X}$ denote the right-adjoint in 
\begin{equation}\label{her09tuiortherte}\cB_{X}:\Shv(X,D(\pt))\leftrightarrows D(X):\cK_{X}\ .
\end{equation}
is called the associated sheaf functor.
\end{ddd}

\begin{rem} \label[rem]{kothprthertgertge} For $U$ in $\Open(X)$,
 %taking the  
 passing to  right-adjoints in $j_{U\to X,!}\cB_{U}\simeq \cB_{X}\hat{j}_{U\to X,!}$, we get
 \begin{equation}\label{gergwergwerfwrefw444}\cK_{U}j_{U\to X}^{*}\simeq \hat{j}_{U\to X}^{*}\cK_{X}\ .
\end{equation}
 Furthermore, for every map $f:X\to Y$, passing to right-adjoints in % taking the right-adjoint of
 $\cB_{X}\hat{f}^{*}\simeq  f^{*}\cB_{Y}$ we get
 \begin{equation}\label{gergwergwerfwrefw4444}\cK_{Y}f_{*}\simeq \hat{f}_{*}\cK_{X}\ .\end{equation} \hB
\end{rem}

Recall the definition of $\ev_{U}$. from \cref{koptrhrtegtergrtgertg}.
\begin{lem}
For every $U$ in $\Open(X)$, we have the equivalence of functors $$\cK_{X}(-)(U)\simeq \ev_{U}(-):D(X)\to D(\pt)\ .$$
\end{lem}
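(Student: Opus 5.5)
The strategy is to compute $\cK_{X}(-)(U)$ by adjunction and then reduce to the case $U=X$. In $\Shv(X,D(\pt))$, evaluation at $U$ is corepresented in the $D(\pt)$-enriched sense: for $E$ in $D(\pt)$ and $F$ a sheaf we have
$$\Map_{D(\pt)}(E,F(U))\simeq \Map_{\Shv(X,D(\pt))}(\hat j_{U\to X,!}\hat p_{U}^{*}E,F)\simeq \Map_{\Shv(X,D(\pt))}(\hat j_{U\to X,!}\hat 1_{U}\otimes \hat p_{X}^{*}E,F).$$
Applying this to $F=\cK_{X}(A)$ and using the adjunction $\cB_{X}\dashv\cK_{X}$ gives
$$\Map(E,\cK_{X}(A)(U))\simeq \Map_{D(X)}(\cB_{X}(\hat j_{U\to X,!}\hat 1_{U}\otimes \hat p_{X}^{*}E),A).$$

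The key step is to identify the argument of $\cB_{X}$ on the right. Since $\cB_{X}$ is symmetric monoidal and $D(\pt)$-linear (it is a morphism in $\CAlg(\Pr^{L}_{\st})_{D(\pt)/}$), we have $\cB_{X}\hat p_{X}^{*}\simeq p_{X}^{*}$. By construction, or equivalently by compatibility with $I$ from \cref{woijgoerferfwrg}, $\cB_{X}(\hat j_{U\to X,!}\hat 1_{U})\simeq h_{X}(U)=j_{U\to X,!}1_{U}$. Combining these with the projection formula I-pf gives
$$\cB_{X}(\hat j_{U\to X,!}\hat 1_{U}\otimes \hat p_{X}^{*}E)\simeq j_{U\to X,!}1_{U}\otimes p_{X}^{*}E\simeq j_{U\to X,!}p_{U}^{*}E.$$
The mapping space is then
$$\Map_{D(X)}(j_{U\to X,!}p_{U}^{*}E,A)\simeq \Map_{D(\pt)}(E,p_{U,*}j_{U\to X}^{*}A)=\Map_{D(\pt)}(E,\ev_{U}(A)),$$
naturally in $E$ and $A$. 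Yoneda then yields the claimed equivalence.

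Alternatively, one can reduce to $U=X$. By \eqref{gergwergwerfwrefw444}, $\cK_{X}(A)(U)\simeq (\hat j^{*}_{U\to X}\cK_{X}A)(U)\simeq \cK_{U}(j_{U\to X}^{*}A)(U)$. By \eqref{gergwergwerfwrefw4444} applied to $p_{U}$, together with $\cK_{\pt}\simeq\id$, this becomes $\hat p_{U,*}\cK_{U}(j^{*}A)\simeq \cK_{\pt}p_{U,*}j^{*}A=\ev_{U}(A)$. This route is cleaner. Its only extra input is checking that $\cB_{\pt}:\Shv(\pt,D(\pt))\simeq D(\pt)\to D(\pt)$ is the identity: it is a $D(\pt)$-linear colimit-preserving functor sending $\hat 1$ to $h_{\pt}(\pt)=1$, hence equivalent to the identity, and so $\cK_{\pt}\simeq\id$.

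I expect the main obstacle to be coherence and naturality rather than the pointwise identifications. One has to make sure the equivalences are natural in $A$ and compatible with the structure maps; the second route handles this automatically, because every step is a composite of canonical Beck--Chevalley equivalences. I would therefore present the second argument, with the first serving as a sanity check.
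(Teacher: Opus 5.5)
Your second route is exactly the paper's proof. The paper writes $\cK_X(-)(U)\simeq\hat p_{U,*}\hat j^{*}_{U\to X}\cK_X$, moves $\cK$ past the restriction via \eqref{gergwergwerfwrefw444} and past the pushforward via \eqref{gergwergwerfwrefw4444} for $p_U$, and concludes with $\cK_{\pt}\simeq\id$. Your justification of that last step, that $\cB_{\pt}$ is a $D(\pt)$-linear colimit-preserving endofunctor of $D(\pt)$ fixing the unit, is correct and fills in a point the paper leaves implicit; your corepresentability argument is also valid but not needed.
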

\begin{proof}
We calculate % and the relations observed in \cref{kothprthertgertge}, that
$$\cK_{X}(-)(U)\stackrel{def}{\simeq} \hat p_{U,*}\hat j_{U\to X}^{*} \cK_{X}(-)\stackrel{\eqref{gergwergwerfwrefw444}}{\simeq} \hat p_{U,*}  \cK_{U}   j_{U\to X}^{*}(-)\stackrel{\eqref{gergwergwerfwrefw4444}}{\simeq}  \cK_{\pt}
p_{U,*}j_{U\to X}^{*}(-)%\uli{\simeq \cK_{\pt} p_{U,*}j_{U\to X}^{*}(-) }
\simeq \ev_{U}(-)\ ,$$ using $\cK_{\pt}\simeq \id$ and \cref{koptrhrtegtergrtgertg} in the final step.
\end{proof}

\begin{kor}\label[kor]{kohperthertgertgrtge} The coefficient system
$D$ is section-determined (\cref{lkpohejztjrtz}) on $X$ if and only if  $\cK_{X}$ is conservative.
\end{kor}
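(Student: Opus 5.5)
The corollary should follow directly from the preceding lemma, which identifies $\cK_{X}(-)(U)\simeq \ev_{U}(-)$ naturally in the argument, for every $U$ in $\Open(X)$. The plan is to compare two jointly conservative families: the evaluations $(\ev_{U})_{U\in\Open(X)}$ on $D(X)$, and the evaluations of sheaves at opens on $\Shv(X,D(\pt))$.

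The key input on the sheaf side is that $\Shv(X,D(\pt))$ is a full subcategory of $\Fun(\Open(X)^{\op},D(\pt))$. Equivalences of presheaves are detected objectwise, so the family $(F\mapsto F(U))_{U\in\Open(X)}$ is jointly conservative on $\Shv(X,D(\pt))$. This is the tautological section-determinedness of $\Shv(-,E)$ noted earlier in the paper.

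Now let $h:A\to B$ be a morphism in $D(X)$. By the lemma, $\ev_{U}(h)$ is an equivalence if and only if $\cK_{X}(h)(U)$ is an equivalence. Because these equivalences are natural in the argument, the identification holds on morphisms and not only on objects.

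For the forward direction, suppose $D$ is section-determined on $X$ and $\cK_{X}(h)$ is an equivalence. Then each $\cK_{X}(h)(U)\simeq\ev_{U}(h)$ is an equivalence, so by section-determinedness $h$ is an equivalence. Hence $\cK_{X}$ is conservative.

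For the converse, suppose $\cK_{X}$ is conservative and $\ev_{U}(h)$ is an equivalence for all $U$. Then $\cK_{X}(h)(U)$ is an equivalence for all $U$. By objectwise detection in sheaves, $\cK_{X}(h)$ is an equivalence, and conservativity of $\cK_{X}$ gives that $h$ is an equivalence.

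There is no real obstacle here. The only point that needs care is that the equivalence in the lemma is an equivalence of functors, so it applies to morphisms and not merely to objects. This is exactly how the lemma is stated.
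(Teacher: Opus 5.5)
Your proof is correct and matches the paper's argument. The paper states this corollary without proof, as an immediate consequence of the preceding lemma $\cK_{X}(-)(U)\simeq \ev_{U}(-)$ combined with the tautological section-determinedness of $\Shv(X,D(\pt))$. You make both directions explicit, including the point that the identification is natural on morphisms.
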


 For the proof of our main theorem,  we must check that
$\cB$ is a morphism of six  functor formalisms  on the standard Nagata context on $\LCH$. To this  end, we must show that for every proper map $f:X\to Y$ in $\LCH$ the square  \begin{equation}\label{guewrguwe9rug9w8erfewrfwer}\xymatrix{ \Shv(Y,D)\ar[r]^{\cB_{Y}}\ar[d]^{\hat f^{*}} &D(Y) \ar[d]^{f^{*}} \\ \Shv(X,D)\ar[r]^{\cB_{X}} & D(X) } 
\end{equation}
is vertically right adjointable. In detail, this means that the canonical Beck-Chevalley map
\begin{equation}\label{gwregerefwrefwer}BC_{f}:\cB_{Y}\hat f_{*}\stackrel{\eta }{\to} f_{*}f^{*}   \cB_{Y}\hat f_{*} \stackrel{\simeq,!}{\to}
	f_{*}   \cB_{X}\hat f^{*}  \hat f_{*} \stackrel{\hat \epsilon}{\to}   f_{*}   \cB_{X} 
\end{equation}    between functors from $\Shv(X,D(\pt))$ to $D(Y)$ is an equivalence. We first observe that \eqref{gwregerefwrefwer} has an alternative description that will be more convenient for our purposes. Since $\cB$ has a right adjoint, one can also consider the map
\begin{equation}\label{eq:BC'}
	BC'_{f}:\cB_{Y}\hat f_{*}\stackrel{\eta^X }{\to} \cB_{Y}\hat f_{*}\cK_X \cB_X \stackrel{\simeq,!}{\to}
	\cB_{Y}\cK_Y f_*\cB_X \stackrel{\epsilon^Y}{\to}   f_{*}   \cB_{X}.
\end{equation}

\begin{lem}\label[lem]{kopgwefref9w}
	The maps \eqref{gwregerefwrefwer} and \eqref{eq:BC'} are equivalent.%homotopic. 
\end{lem}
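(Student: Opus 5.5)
Both maps are natural transformations $\cB_{Y}\hat f_{*}\to f_{*}\cB_{X}$, i.e. 2-cells filling the square obtained from \eqref{guewrguwe9rug9w8erfewrfwer} by replacing the vertical arrows $\hat f^{*}$, $f^{*}$ by their right adjoints. The plan is to identify both with the \emph{mate} of one fixed equivalence, namely $\alpha:\cB_{X}\hat f^{*}\simeq f^{*}\cB_{Y}$ from naturality of $\cB$, but taken with respect to different pairs of adjunctions. The map $BC_{f}$ in \eqref{gwregerefwrefwer} is by definition the mate of $\alpha$ for the vertical adjunctions $\hat f^{*}\dashv\hat f_{*}$ and $f^{*}\dashv f_{*}$. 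The map marked $!$ in \eqref{eq:BC'} is the equivalence $\hat f_{*}\cK_{X}\simeq\cK_{Y}f_{*}$ of \eqref{gergwergwerfwrefw4444}. That equivalence is obtained by passing to right adjoints in $\alpha$, so it is the \emph{total} mate of $\alpha$, using all four adjunctions (vertical and horizontal simultaneously). Then $BC'_{f}$ is the mate of this total mate with respect to the horizontal adjunctions $\cB_{Y}\dashv\cK_{Y}$ and $\cB_{X}\dashv\cK_{X}$.

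The key steps are as follows. First, I will make explicit the total mate $\beta:\hat f_{*}\cK_{X}\to\cK_{Y}f_{*}$: it is the composite
\[
\hat f_{*}\cK_{X}\xrightarrow{\eta^{Y}}\cK_{Y}\cB_{Y}\hat f_{*}\cK_{X}\xrightarrow{\eta}\cK_{Y}f_{*}f^{*}\cB_{Y}\hat f_{*}\cK_{X}\xrightarrow{\alpha^{-1}}\cK_{Y}f_{*}\cB_{X}\hat f^{*}\hat f_{*}\cK_{X}\xrightarrow{\hat\epsilon}\cK_{Y}f_{*}\cB_{X}\cK_{X}\xrightarrow{\epsilon^{X}}\cK_{Y}f_{*},
\]
or equivalently $\cK_{Y}(BC_{f})\cK_{X}$ sandwiched between the unit $\eta^{Y}$ and the counit $\epsilon^{X}$. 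Second, I will substitute this expression into \eqref{eq:BC'}, so that $BC'_{f}$ becomes the long composite
\[
\cB_{Y}\hat f_{*}\xrightarrow{\eta^{X}}\cB_{Y}\hat f_{*}\cK_{X}\cB_{X}\xrightarrow{\eta^{Y}}\cB_{Y}\cK_{Y}\cB_{Y}\hat f_{*}\cK_{X}\cB_{X}\xrightarrow{\cK_{Y}(BC_{f})}\cB_{Y}\cK_{Y}f_{*}\cB_{X}\cK_{X}\cB_{X}\xrightarrow{\epsilon^{X}}\cB_{Y}\cK_{Y}f_{*}\cB_{X}\xrightarrow{\epsilon^{Y}}f_{*}\cB_{X}.
\]
Third, I will use naturality (interchange) to move $\epsilon^{Y}$ past $BC_{f}$ and $\epsilon^{X}$ past $BC_{f}$, bringing the factors $\epsilon^{Y}\circ\cB_{Y}\eta^{Y}$ and $\epsilon^{X}\cB_{X}\circ\cB_{X}\eta^{X}$ next to each other. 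The triangle identities then collapse both of these to identities, leaving exactly $BC_{f}$.

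Conceptually this is the standard fact that passing to mates is compatible with pasting, applied in the double category of adjunctions, and that mates compose functorially: taking the horizontal mate of the total mate returns the vertical mate. I would cite this as a standard result if one is available, and otherwise do the computation above.

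The main obstacle is coherence in the $\infty$-categorical setting, since the maps are only defined up to homotopy. However, every ingredient is a 2-cell in the homotopy 2-category of $\Cat_{\infty}$, and equivalence of natural transformations up to homotopy is detected there. It therefore suffices to carry out the triangle-identity manipulation in that 2-category, where the classical mate calculus applies verbatim. The only point that needs care is checking that the equivalence $!$ in \eqref{eq:BC'} really is the total mate, with the chosen units and counits, rather than some other identification. This holds because it was defined in \cref{kothprthertgertge} by passing to right adjoints, and passing to right adjoints is exactly the total mate.
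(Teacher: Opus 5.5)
Your proposal is correct and is the same mate-calculus argument as the paper. The paper delegates the triangle-identity verification to Cnossen's Lemma F.10 (with $\alpha=BC_{f}$), whereas you carry it out explicitly in the homotopy 2-category; your formula $(\cK_{Y}f_{*}\epsilon^{X})\circ(\cK_{Y}BC_{f}\cK_{X})\circ(\eta^{Y}\hat f_{*}\cK_{X})$ for the map $\hat f_{*}\cK_{X}\to\cK_{Y}f_{*}$ and the ensuing cancellation check out, with one verbal slip: on the $X$-side it is $\eta^{X}$ that you slide past $BC_{f}$ by interchange (since $\epsilon^{X}$ consumes the $\cB_{X}$ produced by $BC_{f}$), after which $f_{*}\epsilon^{X}\cB_{X}\circ f_{*}\cB_{X}\eta^{X}=\mathrm{id}$.
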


%\begin{tikzpicture}[scale=0.8, thick]
%  % Die Box für die Transformation alpha
%  \node[draw, rectangle, fill=white, minimum width=2.5cm, minimum height=0.8cm] (alpha) at (0,0) {$\alpha$};
%
%  % L2 geht direkt von unten nach oben in die Box
%  \draw[->] (0.6, -3) node[below] {$L_2$} -- (0.6, -0.4);
%
%  % L3 geht direkt aus der Box nach oben
%  \draw[->] (-0.6, 0.4) -- (-0.6, 3) node[above] {$L_3$};
%
%  % L1 (oben rechts aus der Box) wird durch die Koeinheit \epsilon_1 nach unten zu R1 gebogen (Cap)
%  \draw[->] (0.6, 0.4) .. controls (0.6, 2) and (2.5, 2) .. (2.5, 0.5) -- (2.5, -3) node[below] {$R_1$};
%  \node at (1.5, 1.8) {$\epsilon_1$};
%
%  % L4 (unten links in die Box) kommt von oben (R4) und wird durch die Einheit \eta_4 gebogen (Cup)
%  \draw[->] (-2.5, 3) node[above] {$R_4$} -- (-2.5, -0.5) .. controls (-2.5, -2) and (-0.6, -2) .. (-0.6, -0.4);
%  \node at (-1.5, -1.8) {$\eta_4$};
%\end{tikzpicture}
%
%\begin{equation*}
%\vcenter{\xymatrix@C=4em@R=4em{
%    \cD \ar[r]^{R_1} \ar[d]_{L_3 R_1} & \cC \ar[r]^{L_1} \ar[d]^{L_2} \ar@{=>}[dl]_{\eta_4 L_2 R_1} & \cD \ar[d]^{L_3} \\
%    \cF \ar[r]_{R_4} \ar@{=>}[ur]_{\alpha R_1} & \cE \ar[r]_{L_4} & \cF \ar@{=>}[ul]_{\epsilon_1}
%}}
%\quad = \quad \beta \quad = \quad
%\vcenter{\xymatrix@C=4em@R=4em{
%    \cD \ar[r]^{R_1} \ar[d]_{L_2 R_1} \ar@{=>}[dr]|{\beta} & \cC \ar[d]^{L_2} \\
%    \cE \ar[r]_{R_4} & \cE
%}}
%\end{equation*}

\begin{proof} This is a general $2$-categorical fact for a square of left-adjoint functors.
The Beck-Chevalley map for vertical right-adjointability is equivalent to
the Beck-Chevalley map for horizontal left-adjointability of the square of right-adjoints.
This fact can be verified by 
 standard %but tedious 
computation  using the calculus of mates. For instance, it follows from \cite[Lemma F.10]{Cnossen2024TwistedAmbidexterity} by taking $\alpha = BC_f$.  % \cite{Kelly_1974}.}\fuli{Check this reference. I did the string argument, really.}
%	\textcolor{blue}{really bad to write down... but true. do you know a reference?}
\end{proof}

We first study the case when $f$ is a closed immersion.

\begin{lem}\label[lem]{lem:BClosedimm}
	Let $D$ be a coefficient system, and let $i:Z\to X$ be the inclusion of a closed subset. Then the square 
	\begin{equation}\xymatrix{ \Shv(X,D)\ar[r]^{\cB_{X}}\ar[d]^{\hat i^{*}} &D(X) \ar[d]^{i^{*}} \\ \Shv(Z,D)\ar[r]^{\cB_{Z}} & D(Z) } 
	\end{equation}
	is vertically right adjointable.
\end{lem}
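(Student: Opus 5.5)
The plan is to use the localization recollement on both sides to reduce the statement to compatibility with $I$, which is already known from \cref{woijgoerferfwrg}. Set $U:=X\setminus Z$ and write $j:=j_{U\to X}$. We must show that the Beck--Chevalley map $BC_i:\cB_X\hat i_*\to i_*\cB_Z$ of functors $\Shv(Z,D(\pt))\to D(X)$ is an equivalence.

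First we test the map against the jointly conservative pair $(j^*,i^*)$ on $D(X)$ provided by localization.

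\emph{Applying $j^{*}$.} The target becomes $j^*i_*\cB_Z$. Taking right adjoints in I-bc for the empty intersection $U\times_X Z=\emptyset$ and using $D(\emptyset)\simeq 0$, this vanishes. For the source, naturality of $\cB$ gives $j^*\cB_X\hat i_*\simeq \cB_U\hat j^*\hat i_*$, and $\hat j^*\hat i_*\simeq 0$ because $\Shv(-,D(\pt))$ is localizing. So both sides are zero and the map is trivially an equivalence.

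\emph{Applying $i^{*}$.} By naturality of $\cB$ for $i$, the source becomes $i^*\cB_X\hat i_*\simeq \cB_Z\hat i^*\hat i_*\simeq \cB_Z$, since $\hat i_*$ is fully faithful. The target becomes $i^*i_*\cB_Z\simeq\cB_Z$, since $i_*$ is fully faithful by the recollement for $D$. It then remains to check that, under these identifications, $i^*BC_i$ is the identity (or at least an equivalence). Unwinding \eqref{gwregerefwrefwer}, the map $i^*BC_i$ is $i^*\eta$ followed by the naturality equivalence and $i_*i^*\hat\epsilon$. Because both units and counits here are equivalences on the relevant objects ($i^*\eta$ is inverse to $\epsilon i^*$ by the triangle identities, and $\hat\epsilon:\hat i^*\hat i_*\to\id$ is an equivalence), the composite is an equivalence. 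This is a formal triangle-identity check with no real content.

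The main obstacle is making the $i^*$-step coherent: one must confirm that the composite really identifies with an equivalence, not merely that the source and target are abstractly equivalent. The plan is to argue directly with the triangle identities as above.

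As a fallback, the statement can instead be reduced to compatibility with $I$. Write the square of fibre sequences $\hat j_!\hat j^*\to\id\to\hat i_*\hat i^*$ and its $D$-analogue, note that $\cB_X$ commutes with $j_!$ and $j^*$ by \cref{woijgoerferfwrg} and naturality, and deduce that $\cB_X\hat i_*\hat i^*\simeq i_*i^*\cB_X$ compatibly with the maps from $\cB_X$. Precomposing with the essentially surjective functor $\hat i^*$ (it is a localization, so $\hat i^*\hat i_*\simeq\id$) then recovers $BC_i$ as an equivalence.
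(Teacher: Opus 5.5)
Your main argument is correct and is essentially the paper's proof. Like the paper, you test $BC_i$ against the jointly conservative pair $(j^*,i^*)$, using the naturality $f^*\cB_B\simeq\cB_A\hat f^*$, the vanishing $j^*i_*\simeq 0$ and $\hat j^*\hat i_*\simeq 0$, and the full faithfulness of $i_*$ and $\hat i_*$. Your check that $i^*BC_i$ is a composite of equivalences ($i^*\eta$ is invertible by the triangle identity because $\epsilon i^*$ is) spells out the step the paper calls immediate, so the fallback argument is not needed.
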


\begin{proof}
	We must show 
	that the map \begin{equation}\label{eq:BCclosedimm}
		\cB_X\hat i_*\to i_*\cB_Z
	\end{equation} is an equivalence. Let $j: X\setminus Z\to X$ be the inclusion of the open complement of $Z$. It suffices to show that \eqref{eq:BCclosedimm} is an equivalence after applying %composing with 
	$i^*$ and $j^*$ individually. This is immediate  using  $f^*\cB_{B}\simeq\cB_{A} \hat f^*$ for all $f:A\to B$ in $\LCH$ and the 
	relations  $j^{*}i_{*}\simeq 0$, $\hat j^{*} \hat i_{*}\simeq 0$, $i^{*}i_{*}\simeq \id$ and $\hat i^{*}\hat i_{*}\simeq \id$
	provided by the localization property for $D$, and for $\Shv(-,D(\pt))$.
	
\end{proof}

We now %want to 
focus on  the special case where $Y=\pt$. Note that $\cB_{\pt}$ is an equivalence. Since $\cB_{X}$ is symmetric monoidal, the Beck-Chevalley map \eqref{gwregerefwrefwer} evaluated at the monoidal unit gives \begin{equation}\label{eq:BConcoh}\Gamma^{\Shv(-,D(\pt))}(X) \to \Gamma^{D}(X).\end{equation} We show that this upgrades to a natural transformation of cohomology theories.

\begin{lem}
	Let $D$ be a %any 
	coefficient system. There is a natural transformation of functors \begin{equation}\label{eq:natonGamma}
		\Gamma^{\cB}:\Gamma^{\Shv(-,D(\pt))} \to \Gamma^{D}(-):\LCH^{\op}\to D(\pt)\ ,
	\end{equation} %of functors $\LCH^{\op}\to D(\pt)$, 
	whose component at any $X$ in $\LCH$ is given by the map \eqref{eq:BConcoh}. % induced by the Beck-Chevalley transformation.
\end{lem}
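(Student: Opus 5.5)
We plan to construct $\Gamma^{\cB}$ coherently as a composite of natural transformations, rather than checking naturality of \eqref{eq:BConcoh} by hand. The key observation is that $\Gamma^{D}(X)\simeq \ev_X(1_X)\simeq \cK_{\pt}p_{X,*}p_X^{*}1$. By \eqref{gergwergwerfwrefw4444} we have $\cK_{\pt}p_{X,*}\simeq \hat p_{X,*}\cK_X$. Hence
$$\Gamma^{D}(X)\simeq \hat p_{X,*}\cK_X(1_X)\simeq \hat p_{X,*}\cK_X\cB_X(\hat 1_X),$$
using that $\cB_X$ is symmetric monoidal, so $\cB_X(\hat 1_X)\simeq 1_X$. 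The unit $\eta^X:\id\to\cK_X\cB_X$ then yields a map
$$\Gamma^{\Shv(-,D(\pt))}(X)=\hat p_{X,*}\hat 1_X\to \hat p_{X,*}\cK_X\cB_X\hat 1_X\simeq \Gamma^{D}(X).$$
By \cref{kopgwefref9w}, this map agrees with the Beck--Chevalley map \eqref{gwregerefwrefwer} for $f=p_X$ evaluated at $\hat 1_X$. Indeed, $BC'_{p_X}$ applied to $\hat 1_X$ is exactly this composite followed by $\epsilon^{\pt}$, and $\epsilon^{\pt}$ is an equivalence because $\cB_{\pt}$ is an equivalence. So the component is \eqref{eq:BConcoh}.

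For coherent naturality, the idea is to view everything in terms of the functor $\LCH^{\op}\to D(\pt)$ given by $X\mapsto \hat p_{X,*}\cK_X(1_X)$. We use the same coherent machinery as in the definition of $\Gamma^D$, namely \cite[Proposition 3.3]{Volpe2025SixOperations}, which constructs $X\mapsto p_{X,*}p_X^*1$ from the functor $X\mapsto D(X)$ together with the pullbacks. More concretely, we apply that construction to the transformation $\cB$ itself, using the following steps in order.
\begin{enumerate}
\item Regard $\cB$ as a functor $\LCH^{\op}\to\Fun(\Delta^1,\CAlg(\Pr^L_{\st}))$.
\item Pass to right adjoints, which produces the lax squares of \cref{kothprthertgertge}.
\item Apply global sections, i.e.\ push to $\pt$ via $\hat p_{X,*}$ and $p_{X,*}$, to obtain a functor $\LCH^{\op}\to\Fun(\Delta^1,D(\pt))$.
\item Evaluate at units. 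The unit $\hat 1_X\mapsto 1_X$ is compatible with pullbacks because both $\cB$ and the $f^*$ are symmetric monoidal.
\end{enumerate}
Equivalently, we can use that $\Gamma^D(X)\simeq \mathrm{map}$-type expressions: $\Gamma^D(X)$ is the internal mapping object in $D(\pt)$ out of the image of the unit, and $\cB$ induces maps on these internal homs functorially.

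The cleanest route is probably through the adjunction with $\cK$. The transformation $\cB$ is natural in $X$ for pullbacks, and passing to mates gives a natural transformation, natural in $X$, between the functors $X\mapsto \Shv(X,D(\pt))$ and $X\mapsto D(X)$ in the lax sense, with components $\cK_X$. The map $\eta^X:\hat 1_X\to \cK_X 1_X$ is the mate of the identity $\cB_X\hat 1_X\simeq 1_X$, and these identities are natural in $X$.

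The main obstacle is the higher coherence of mates: we must make sure that the transformation obtained by passing to right adjoints objectwise assembles into a natural transformation of functors $\LCH^{\op}\to D(\pt)$. To handle this, we will invoke the standard fact that adjunctions in the $(\infty,2)$-category of categories can be taken coherently over a diagram. Concretely, the functor $\Shv(-,D(\pt))\to D(-)$ corresponds to a map of cocartesian fibrations over $\LCH^{\op}$, and its fibrewise right adjoint exists as a relative right adjoint that preserves only lax structure. Evaluating the relative unit at the section $X\mapsto\hat 1_X$ and pushing to the point gives \eqref{eq:natonGamma}.
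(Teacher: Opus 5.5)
Your treatment of the component matches the paper's. For $f=p_X$, $BC'_f$ precomposed with $\hat f^{*}$ is the map $\hat f_{*}\hat f^{*}\to\hat f_{*}\cK_X\cB_X\hat f^{*}\simeq f_{*}f^{*}$ induced by the unit of $\cB_X\dashv\cK_X$, and \cref{kopgwefref9w} identifies it with \eqref{eq:BConcoh}. Where you differ is in how coherence is obtained.

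The paper uses neither mates nor relative adjoints. By \cref{woijgoerferfwrg}, $\cB$ is a functor $\LCH^{\op}\times\Delta^{1}\to\Cat^{L}_{D(\pt)/}$. The paper postcomposes it with a single functor $T:\Cat^{L}_{D(\pt)/}\to\Fun(D(\pt),D(\pt))$, $(f^{*}:D(\pt)\to\cC)\mapsto f_{*}f^{*}$, which acts on morphisms by inserting units (as in \cite[Proposition 3.3]{Volpe2025SixOperations}), and then with $\ev_{1}$. The pullbacks and the components $\cB_X$ are all just morphisms under $D(\pt)$. Hence the two ends of the resulting diagram are $\Gamma^{\Shv(-,D(\pt))}$ and $\Gamma^{D}$ by construction, and the component at $X$ is $\ev_{1}T(\cB_X)$. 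The ``main obstacle'' you name never arises.

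Your steps 1--4 gesture at this but are imprecise. The squares of \cref{kothprthertgertge} are invertible, not lax. Step 3 only yields objects of $D(\pt)$ after evaluating at units, or endofunctors after conjugating with $p_X^{*}$.

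The relative-adjunction route you finally favour is valid, but it needs two checks you leave implicit. First, pushing a non-cocartesian section to the point must be made functorial. One way is to compose with the right adjoint of the inclusion of the fibre over the initial object $\pt$ of $\LCH^{\op}$; this is cartesian transport, available since the pullbacks have right adjoints. Second, $X\mapsto\hat p_{X,*}\cK_X 1_X$ must be identified with $\Gamma^{D}$ as a functor, not just objectwise. This holds because the relative right adjoint preserves cartesian edges by \eqref{gergwergwerfwrefw4444}, and so commutes with cartesian transport to $\pt$.

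Your version describes $\Gamma^{\cB}$ conceptually as the relative unit evaluated on the unit section. The paper's version is shorter because it hands all coherence to the single functor $T$.
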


\begin{proof}
	Let $X$ be any locally compact Hausdorff space, and let $f:X\to\pt$ be the unique map. Since $\cB_X$ admits a right adjoint and the target of $f$ is $\pt$, after precomposing with $\hat f^*$, the map \eqref{eq:BC'} (which is equivalent to \eqref{gwregerefwrefwer} by \cref{kopgwefref9w})  can be described as 
	\begin{equation}\label{eq:betterBC}\hat f_*\hat f^*\to \hat f_* \cK_X\cB_X\hat f^*\simeq f_* f^*\end{equation}
	where the first arrow is the unit of the adjunction $\cB_X\dashv\cK_X$. We explain how to upgrade \eqref{eq:betterBC} to the $X$ component of a natural transformation of functors $$\LCH^{\op}\to\Fun(D(\pt), D(\pt))\ .$$ The desired transformation \eqref{eq:natonGamma} is then obtained by postcomposing with \begin{equation}\label{gwerferfwergwregw45}
\ev_{1}: \Fun(D(\pt), D(\pt))\to D(\pt)\ .\end{equation} 
Let $\Cat^L$ be the wide subcategory of $\Cat$ whose morphisms are left-adjoint functors. Arguing as in \cite[Proposition 3.3]{Volpe2025SixOperations}, one can construct a functor \begin{equation}\label{werfwerfwefwerf}T:\Cat^{L}_{D(\pt)/}\to \Fun(D(\pt), D(\pt))\ .
\end{equation} Its value on an object $f^*:D(\pt)\to\cC$ of $\Cat^{L}_{D(\pt)/}$ is given by $f_*f^*:D(\pt)\to D(\pt)$, and its action on morphisms is given by inserting the units of the respective adjunctions. By \cref{woijgoerferfwrg}, $\cB$ is, in particular, a natural transformation of functors $\LCH^{\op}\to\Cat^L_{D(\pt)/}$. Postcomposing with the functors $T$ from \eqref{werfwerfwefwerf} and $\ev_{1}$ from \eqref{gwerferfwergwregw45} yields \eqref{eq:natonGamma}.
\end{proof}

\begin{prop}\label[prop]{prop:Gamma(B)eqonfinitarycan6ff}
	Let  $D $ be a  six  functor  formalism on the standard Nagata context on $\LCH$,  %which
	that has canonical descent and  whose  associated cohomology  functor $\Gamma^{D}$  is finitary. Assume moreover that $D(\pt)$ is dualizable. Then the transformation \eqref{eq:natonGamma} between the associated cohomology functors is a natural equivalence.
\end{prop}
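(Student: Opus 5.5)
Both $\Gamma^{\Shv(-,D(\pt))}$ and $\Gamma^{D}$ are finitary: the first by \cref{kopherhtgertg}, the second by assumption. The transformation $\Gamma^{\cB}$ from \eqref{eq:natonGamma} is natural. The plan is therefore to prove that $\Gamma^{\cB}_X$ is an equivalence for $X$ in a class of ``nice'' spaces, and then propagate this to all of $\LCH$.

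\emph{Reduction to compact spaces.} Let $X$ be non-compact with one-point compactification $X^{+}$. Localization gives a fibre sequence $p_{X^+,*}j_!j^*p_{X^+}^*1\to\Gamma(X^+)\to\Gamma(\pt)$ for both theories. The remaining piece $\Gamma(X)=p_{X,*}p_X^*1$ is recovered as a limit. Write $X=\bigcup U$ with $U$ relatively compact open, so that $j_*j^*\simeq\lim j_{U,*}j_U^*$ by \eqref{gwerfwfeerferrgfdfg}. Then $\Gamma(X)\simeq\lim_U\Gamma(U)$, and each $\Gamma(U)$ fits in a localization fibre sequence involving $\Gamma(\bar U)$ and $\Gamma(\bar U\setminus U)$ computed on compact spaces, all compatibly with $\Gamma^{\cB}$. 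Actually it is cleaner to use $\Gamma(X)\simeq\lim_{K}\Gamma(K)$ over compact $K\subseteq X$ with $X=\bigcup\mathrm{int}(K)$; both sides satisfy this because they are $p_{X,*}$ of $j_*j^*$-limits combined with localization. It therefore suffices to treat compact $X$.

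\emph{Reduction to finite-dimensional compacta.} As in the proof of \cref{okhpetrhrferferferfetgerge}, write a compact $X$ as $\lim_F Y_F$ with $Y_F\subseteq[0,1]^F$ closed. Finitariness of both theories and naturality of $\Gamma^{\cB}$ reduce the claim to closed subsets $Y\subseteq[0,1]^N$. Next write $Y=\bigcap_n Y_n$ with $Y_n$ compact neighbourhoods that are finite cubical complexes (unions of small closed cubes meeting $Y$). Since this is a cofiltered limit in $\CH$, finitariness again reduces us to finite unions of closed cubes.

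\emph{Finite cube complexes.} Both restricted theories satisfy closed excision, by \cref{kopthrthertgertg} and \cref{ghuwegerfwefwerf}. An induction on the number of cubes, using Mayer–Vietoris squares for the decomposition $Y=Y'\cup C$ and the fact that intersections of cubes are again cube complexes of lower complexity, reduces the problem to a single cube $[0,1]^k$, and indeed to the point. At a point, $\Gamma^{\cB}_{\pt}$ is the identity because $\cB_\pt\simeq\id$.

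\emph{Main obstacle: homotopy invariance.} Passing from $[0,1]^k$ to $\pt$ requires $\Gamma^D([0,1])\simeq\Gamma^D(\pt)$, and this is where dualizability of $D(\pt)$ enters. The argument I would try is as follows. Consider $\Gamma^D([0,1])$ and subdivide: closed excision on $[0,1]=[0,\tfrac12]\cup[\tfrac12,1]$, iterated along dyadic subdivisions, together with finitariness applied to the system of ``pinch'' maps $[0,1]\to[0,1]$ that collapse subintervals, expresses the cofibre of $1\to\Gamma^D([0,1])$ as a colimit of maps that factor through arbitrarily fine subdivisions. Concretely, the Cantor set $C$ maps onto $[0,1]$; finitariness gives $\Gamma^D(C)\simeq\colim\Gamma^D(\text{finite sets})$, which is a locally constant computation agreeing with sheaves. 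Then the cofibre of $[0,1]$ is compared via excision with the quotient $C\to[0,1]$ (whose fibres are points or pairs of points). I expect this step to need dualizability, so that $-\otimes$ commutes with the relevant limits when identifying both theories with the $D(\pt)$-linearization of the $\Sp$-valued case. Failing this, I would invoke the remark that homotopy invariance can replace dualizability, and concentrate the proof there.
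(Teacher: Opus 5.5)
\textbf{There is a genuine gap: the cube case, which is the actual content of the proposition, is never proved.}

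Your reductions are sound as far as they go:
\begin{itemize}
\item from $\LCH$ to $\CH$ via $\Gamma(X)\simeq\lim_K\Gamma(K)$;
\item from $\CH$ to closed subsets of $[0,1]^N$, and then to finite cube complexes, by finitariness;
\item from cube complexes to single cubes by closed excision.
\end{itemize}
But these only show that the proposition is equivalent to its special case for cubes. Since $\Gamma^{\Shv(-,D(\pt))}$ is homotopy invariant and $\Gamma^{\cB}_{\pt}$ is an equivalence, that special case says exactly that $\Gamma^{D}(\pt)\to\Gamma^{D}([0,1]^k)$ is an equivalence. In other words, you need homotopy invariance of $\Gamma^D$ on cubes. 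This is not a hypothesis, and your proposal does not prove it.

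The Cantor-set sketch does not close the gap. The quotient $q\colon C\to[0,1]$ is not a relative homeomorphism for any nontrivial closed pair, because the points with two preimages are dense. So neither strong nor closed excision applies to it. Comparing $\Gamma^D([0,1])$ with $\Gamma^D(C)$ would need descent for $\Gamma^D$ along the surjection $q$, which is as deep as the claim itself. Nor is there any a priori reason for $\Gamma^D$ to be the $D(\pt)$-linearization of a spectrum-valued theory.

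Your fallback is also unavailable. \cref{rem:htpyinsteadofdual} replaces the \emph{hypothesis} of dualizability by the \emph{hypothesis} that $\Gamma^D$ is homotopy invariant. It does not derive homotopy invariance from the present assumptions.

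\textbf{The missing ingredient is Clausen's theorem} \cite[Thm.~3.6.2]{NKP}, which is what the paper uses. For dualizable $D(\pt)$, evaluation at $\pt$ is an equivalence from the $\infty$-category of finitary functors $\CH^{\op}\to D(\pt)$ with closed excision onto $D(\pt)$. Both cohomology theories lie in this category, by \cref{kopthrthertgertg}, \cref{ghuwegerfwefwerf}, \cref{kopherhtgertg} and the hypothesis on $\Gamma^D$. Since $\Gamma^{\cB}_{\pt}$ is an equivalence, $\Gamma^{\cB}$ is an equivalence on $\CH$ in one step. Homotopy invariance of $\Gamma^D$ is then an output rather than an input, and no cube decomposition is needed.

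If you want to avoid the black box, stop after your reduction to a closed $Y\subseteq[0,1]^N$ and argue as follows.
\begin{itemize}
\item On compact subsets $K\subseteq Y$, both $K\mapsto\Gamma^D(K)$ and $K\mapsto\Gamma^{\Shv(-,D(\pt))}(K)$ are $\mathcal{K}$-sheaves in Lurie's sense. They take value $0$ on $\emptyset$, satisfy closed excision, and satisfy $\Gamma(K)\simeq\colim_{K\subseteq L^{\circ}}\Gamma(L)$ by finitariness.
\item Hence both define $D(\pt)$-valued sheaves on $Y$, and $\Gamma^{\cB}$ induces a map between them. By finitariness, its stalk at $y$ is identified with $\Gamma^{\cB}_{\{y\}}\simeq\Gamma^{\cB}_{\pt}$, an equivalence.
\item Because $Y$ is hypercomplete and $D(\pt)$ is dualizable, stalks detect equivalences. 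So this map is an equivalence, and evaluating on $Y$ shows that $\Gamma^{\cB}_Y$ is an equivalence.
\end{itemize}
This is where dualizability genuinely enters. Your reduction from $\LCH$ to $\CH$ remains a useful supplement, since the paper's argument is phrased on compact spaces only.
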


\begin{proof}
	Since both functors satisfy strong excision (see \cref{kopthrthertgertg}),
	they also satisfy closed excision (see \cref{ghuwegerfwefwerf}). 
	The functor $\Gamma^{D}$ is finitary by assumption, and $\Gamma^{\Shv(-,D(\pt))}$ is  also finitary as observed in \cref{kopherhtgertg}.
	
	%Furthermore, by \cref{hkoptrhertgertger} both functors are finitary.  %They are furthermore continuous, where we refer to 
	%\cite[Thm. 3.6.10]{NKP} for the cohomology associated to sheaves, and to our assumptions for $\Gamma^{D}$.
	Let $\Fun^{\desc}(\CH^{\op},D(\pt))$ denote the full category of $ \Fun(\CH^{\op},D(\pt))$ spanned by finitary\footnote{In \cite{NKP}, this property is called {\em profinite descent}.} functors which satisfy   closed excision.   If $D(\pt)$ is dualizable, then we can apply a   theorem of Clausen \cite[Thm. 3.6.2]{NKP}  stating that the  evaluation at $\pt$
	$$(F\mapsto F(\pt)):\Fun^{\desc}(\CH^{\op},D(\pt))\to D(\pt)$$
	is an equivalence of $\infty$-categories.  So it suffices to show that 
	$\Gamma^{\cB}_{\pt}$ is an equivalence which is obvious. 
\end{proof}

%This finishes the proof in the Case \ref{gjiopwegerfwerfwerf}.

\begin{kor}\label[kor]{jitwoprtherhetrhrh}
Let  $D $ be a  six  functor  formalism on the standard Nagata context on $\LCH$,  that has canonical descent and  whose  associated cohomology  functor $\Gamma^{D}$  is finitary. Assume moreover that $D(\pt)$ is dualizable.   
%and one of the following holds:
 % \begin{enumerate}
 %\item  \label{gjiopwegerfwerfwerf} $D(\pt)$ is dualizable
 %\item  \label{gjiopwegerfwerfwerf1}  $\Gamma^{D}$ is  homotopy invariant.
 %\end{enumerate}
  Then for every compact 
 $X$  in $\LCH$
 the square 
 $$\xymatrix{ \Shv(\pt,D)\ar[r]_{\simeq}^{\cB_{\pt}} \ar[d]^{\hat f^{*}} &D(\pt) \ar[d]^{f^{*}} \\ \Shv(X,D)\ar[r]^{\cB_{X}} & D(X) } $$
 is vertically right adjointable.
\end{kor}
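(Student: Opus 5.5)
The plan is to prove that the Beck--Chevalley map $BC_{f}:\cB_{\pt}\hat f_{*}\to f_{*}\cB_{X}$ from \eqref{gwregerefwrefwer}, for $f:X\to\pt$ with $X$ compact, is an equivalence. I would do this by reducing to the unit objects $\hat 1_{X}$ and $\hat 1_{Z}$ for compact $Z$. On those objects the map is the transformation $\Gamma^{\cB}$ of \eqref{eq:natonGamma}, which is an equivalence by \cref{prop:Gamma(B)eqonfinitarycan6ff}.

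\textbf{Step 1: reduction to generators.} Since $X$ is compact, $f$ is proper. By P-ra for $D$ and for $\Shv(-,D(\pt))$, both $f_{*}$ and $\hat f_{*}$ preserve colimits, and the functors $\cB_{X}$, $\cB_{\pt}$ are left adjoints. Hence source and target of $BC_{f}$ are colimit-preserving functors $\Shv(X,D(\pt))\to D(\pt)$.

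Both functors are moreover $D(\pt)$-linear. For the pushforwards this follows from the projection formula P-pf, and for $\cB$ it holds because $\cB$ is a transformation in $\CAlg(\Pr^{L}_{\st})_{D(\pt)/}$. I would check that $BC_{f}$ is compatible with these linear structures, since it is assembled from units and counits of linear adjunctions. Granting this, it suffices to check that $BC_{f}$ is an equivalence on the generators $\hat j_{U\to X,!}\hat 1_{U}$, $U\in\Open(X)$. These generate $\Shv(X,D(\pt))\simeq\Shv(X,\Sp)\otimes D(\pt)$ under colimits and the $D(\pt)$-action.

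\textbf{Step 2: dévissage via localization.} Let $Z:=X\setminus U$, which is compact, and let $i:Z\to X$ be the inclusion. The sequence \eqref{bwiojoievewrvfds} for $\Shv(-,D(\pt))$ gives a cofibre sequence
$$\hat j_{U\to X,!}\hat 1_{U}\to \hat 1_{X}\to \hat i_{*}\hat 1_{Z}.$$
By exactness of source and target of $BC_{f}$ and the five lemma, it suffices to treat the last two terms.

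On $\hat 1_{X}$, the map $BC_{f}$ is by definition the component \eqref{eq:BConcoh} of $\Gamma^{\cB}$ at $X$. This is an equivalence by \cref{prop:Gamma(B)eqonfinitarycan6ff}.

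For $\hat i_{*}\hat 1_{Z}$, I would use the pasting law for Beck--Chevalley maps. The square for $f\circ i=p_{Z}$ is the vertical composite of the square for $i$ and the square for $f$. Therefore $BC_{p_{Z}}$ factors as $BC_{f}\hat i_{*}$ followed by $f_{*}BC_{i}$. Here $BC_{i}$ is an equivalence by \cref{lem:BClosedimm}. Hence $BC_{f}$ at $\hat i_{*}\hat 1_{Z}$ is an equivalence if and only if $BC_{p_{Z}}$ at $\hat 1_{Z}$ is. The latter is the component of $\Gamma^{\cB}$ at the compact space $Z$, again an equivalence by \cref{prop:Gamma(B)eqonfinitarycan6ff}.

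\textbf{Main obstacle.} The delicate point is coherence rather than computation. Two things have to hold at the level of natural transformations and not just objectwise: $BC_{f}$ must be $D(\pt)$-linear, and Beck--Chevalley maps must compose under pasting of squares. For pasting I would cite the standard calculus of mates. For linearity I would argue via the description \eqref{eq:BC'} from \cref{kopgwefref9w}: there $BC_{f}$ is built from the unit of $\cB_{X}\dashv\cK_{X}$ and the equivalence \eqref{gergwergwerfwrefw4444}, and both are compatible with the $D(\pt)$-module structures because all adjunctions involved are $D(\pt)$-linear by the projection formulas.
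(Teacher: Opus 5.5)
Your proposal is correct and follows the paper's proof essentially step by step. You reduce to the generators $\hat j_{U\to X,!}\hat 1_{U}$ (the paper phrases this via the equivalence between $D(\pt)$-linear colimit-preserving functors and cosheaves), apply d\'evissage along the localization sequence, and use the factorization $BC_{f\circ i}\simeq f_{*}(BC_{i})\circ BC_{f}\hat i_{*}$ together with \cref{lem:BClosedimm} and \cref{prop:Gamma(B)eqonfinitarycan6ff}; the only differences are that you treat $\hat 1_{X}$ separately instead of as the case $K=X$, and that you explicitly flag the $D(\pt)$-linearity of $BC_{f}$, which the paper uses implicitly.
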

\begin{proof}%[Proof of \cref{jitwoprtherhetrhrh}]
      %Let  $f:X\to *$ be the projection map. We identify $\Shv(*,D(\pt))$ with $D(\pt)$ via $\cB_{\pt}$.
  %We   must show that   the Beck-Chevalley map
    %\begin{equation}\label{gwregerefwrefwer}BC_{X}:\hat f_{*}\stackrel{\epsilon }{\to} f_{*}f^{*}   \hat f_{*} \stackrel{\simeq,!}{\to}
   %f_{*}   \cB_{X}\hat f^{*}  \hat f_{*} \stackrel{\hat \eta}{\to}   f_{*}   \cB_{X} 
%\end{equation}    between functors from $\Shv(X,D(\pt))$ to $D(\pt)$ is an equivalence.
Restriction along the Yoneda embedding
$$\Fun^{\colim}_{D(\pt)}(\Shv(X,D(\pt)),D(\pt))\to \Fun(\Open(X),D(\pt))$$
induces an equivalence onto the full subcategory of $D(\pt)$-valued cosheaves on
$X$. The image of the functor $\hat f_{*}$ under this equivalence is   the cosheaf
$$U\mapsto  \hat f_{*}\hat j_{U\to X,!} \hat 1_{U} \ .$$
Similarly, the  image of the  functor
$ f_{*}   \cB_{X} $ is  the cosheaf
$$U\mapsto   f_{*}  j_{U\to X,!}   1_{U}  \ .$$
We must show that the Beck-Chevalley map 
$$BC_{f}: \cB_{\pt}\hat f_{*}\to f_{*}\cB_{X}$$ from \eqref{gwregerefwrefwer} is an equivalence. This boils down to showing that
$$ BC_{f}: \cB_{\pt}\hat f_{*}\hat j_{U\to X,!} \hat 1_{U}\to f_{*}  \cB_{X} \hat j_{U\to X,!}   \hat 1_{U}$$
is an equivalence for all $U$ in $\Open(X)$.
%\uli{Using $  \cB_{X} \hat j_{U\to X,!}   \hat 1_{U}\simeq    j_{U\to X,!}    1_{U}$ and $\cB_{\pt}\simeq \id$ this map is equivalent 
%to $$ \hat f_{*}\hat j_{U\to X,!} \hat 1_{U}\to  f_{*}  j_{U\to X,!}    1_{U}\ .$$}
Applying $BC_{f}$ %from \eqref{gwregerefwrefwer}  
to the localization sequence
$$\hat j_{U\to X,! } \hat 1_{U}\to  \hat 1_{X}\to \hat  i_{X\setminus U\to X,* }\hat 1_{X\setminus U}$$
we see that it suffices to show that
$$ BC_{f}: \cB_{\pt}\hat f_{*}\hat i_{K\to X,*} \hat 1_{K}\to f_{*}  \cB_{X} \hat i_{K\to X,*}   \hat 1_{K}$$
is an equivalence
% it induces an equivalence on the objects $\hat i_{K\to X,*}\hat 1_{K}$
for all compact subsets $K$ of $X$. 
Abbreviating $i_{*}:=i_{K\to X,*} $ and omitting $\cB_{\pt}\simeq \id$, we claim there is a commutative triangle
\begin{equation}\label{triangle:composeBC}
\small
\xymatrix@C=4em@R=3em{
	\hat f_* \hat i_* \hat 1_K
	\ar[r]^{BC_{f\circ i}}
	\ar[d]_{BC_f}
	&
	f_* i_* 1_K
	\\
	f_* \cB_X \hat i_* \hat 1_K
	\ar[ru]_{f_*(BC_i)}.
	&
}
\end{equation}
By \cref{prop:Gamma(B)eqonfinitarycan6ff} and \cref{lem:BClosedimm} respectively, we know that $BC_{f\circ i}$ and $BC_i$ are equivalences; thus, the commutativity of the triangle  concludes the proof. The commutativity of \eqref{triangle:composeBC} is a standard property of Beck-Chevalley maps; we provide here a full argument for the reader's convenience. Consider the following diagram
\begin{equation}\label{diagram:composeBC}
\small
\xymatrix@C=3.2em@R=3em{
	\hat i_* 
	\ar[d]_{\eta^X}
	\ar[r]^{\eta^K}
	&
	\hat i_* \cK_K\cB_K
	\ar[d]^{\eta^X}
	\ar@/^2.2pc/[rrrd]^{\mathrm{id}}
	&
	&
	&
	\\
	\cK_X\cB_X\hat i_*
	\ar[r]^{\eta^K}
	&
	\cK_X\cB_X \hat i_*\cK_K\cB_K
	\ar[r]^{\simeq}
	&
	\cK_X\cB_X \cK_X i_*\cB_K
	\ar[r]^{\cK_X(\epsilon^X)}
	&
	\cK_X i_* \cB_K
	\ar[r]^{\simeq}
	&
	\hat i_*\cK_K \cB_K.
}
\end{equation}
The left square commutes by naturality of units, and the right triangle commutes by the triangular identities. Therefore  the entire diagram commutes. The commutative triangle \eqref{triangle:composeBC} is then obtained by applying the functor  $\hat f_*$ to \eqref{diagram:composeBC}.
 \end{proof}

 \begin{rem}\label[rem]{rem:htpyinsteadofdual}
 	\cref{prop:Gamma(B)eqonfinitarycan6ff} and \cref{jitwoprtherhetrhrh} remain valid if one replaces the assumption that $D(\pt)$ is dualizable with the assumption that $\Gamma^D$  is homotopy invariant.  We give a brief %very rough 
	sketch of how to prove this claim. A detailed argument will appear 
	%, among other things, 
	in a forthcoming paper of the second named author in collaboration with Maxime Ramzi and Sebastian Wolf. 
 	
 	%Write 
	Let $\Fun^{\desc, \mathrm{htpy}}(\CH^{\op},D(\pt))$ denote %for 
	the full subcategory of $\Fun^{\desc}(\CH^{\op},D(\pt))$ spanned by homotopy invariant functors. Then $$(F\mapsto F(\pt)):\Fun^{\desc, \mathrm{htpy}}(\CH^{\op},D(\pt))\to D(\pt)$$ is an equivalence of $\infty$-categories, even if $D(\pt)$ is not dualizable. The key step %most relevant aspect 
	of the proof is showing that the functor is conservative. Since any compact Hausdorff space is a cofiltered limit of finite polyhedra, one can use finitariness and closed excision to deduce %see 
	that the values of any $F\in\Fun^{\desc, \mathrm{htpy}}(\CH^{\op},D(\pt))$ are determined by its values on $n$-simplices. Homotopy invariance then forces all such values to be equivalent to $F(\pt)$. 
 \end{rem}
 
 %  is used to the effect that equivalences between $D(\pt)$-valued sheaves on finite-dimensional compact Hausdorff spaces can be checked on stalks. At this point, instead of dualizability we could also  use  
 %homotopy invariance of $\Gamma^{\Shv(-,D(\pt))}$ and  $\Gamma^{D}$ finishing the proof in Case \ref{gjiopwegerfwerfwerf1}.

% \begin{kor}
% Under the assumptions that of \cref{jitwoprtherhetrhrh} we have
% $f_{*}\cB_{X}\simeq \hat f_{*}$ for any map $f:X\to\pt$.
% \end{kor}
% \begin{proof}
%We consider the localization sequence for the inclusion $j:X\to X^{+}$ of $X$ into its one-point compactificsation and let $f^{+}:X^{+}\to \pt$ be the corresponding projection.
%We start with the fibre sequence
%$$f^{+}j_{!}j^{*}
% \end{proof}

\subsection{Proofs of the main theorems}

 \begin{theorem}\label[theorem]{hkoperthetrgertge}
Let  $D $ be a  six  functor  formalism on the standard Nagata context on $\LCH$  that %which 
has canonical descent and  whose  associated cohomology  functor $\Gamma^{D}$  is finitary. Assume moreover that $D(\pt)$ is dualizable. 
% \begin{enumerate}
 %\item   $D(\pt)$ is dualizable
 %\item    $H_{D}$ homotopy invariant.
 %\end{enumerate}
 Then the canonical morphism of coefficient systems 
 $$\cB:\Shv(-,D(\pt))\to D(-)$$ from  \eqref{boijoidfgbdfgbdgfb}
 is objectwise fully faithful.
 \end{theorem}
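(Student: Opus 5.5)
Fully faithfulness of the left adjoint $\cB_X$ is equivalent to the unit $\eta^X:\id\to\cK_X\cB_X$ of the adjunction \eqref{her09tuiortherte} being an equivalence. We show this for every $X$ in $\LCH$ by evaluating on the generators of $\Shv(X,D(\pt))$.

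First reduce to compact $X$. For $X$ non-compact, let $j:X\to X^{+}$ be the inclusion into the one-point compactification. Since $\cB$ is a morphism of coefficient systems we have $\cB_{X^{+}}\hat j_{!}\simeq j_{!}\cB_X$. Because $\hat j_{!}$ and $j_{!}$ are fully faithful (localization for both formalisms), full faithfulness of $\cB_{X^{+}}$ implies that of $\cB_X$: the map on mapping spaces for $\cB_X$ is identified with that of $\cB_{X^+}$ precomposed by $\hat j_!$. So assume $X$ compact.

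Now fix $A$ in $\Shv(X,D(\pt))$. Since $\cK_X$ and the evaluations are limit-preserving, it suffices to check that $\eta^X_A(U)=\hat p_{U,*}\hat j_U^{*}(\eta^X_A)$ is an equivalence for every $U$ in $\Open(X)$. By the lemma identifying $\cK_X(-)(U)$ with $\ev_U$, the target is $\ev_U(\cB_X A)=p_{U,*}j_U^{*}\cB_X A\simeq p_{U,*}\cB_U \hat j_U^{*}A$. Unwinding as in \eqref{eq:betterBC}, the map $\eta^X_A(U)$ is the Beck--Chevalley map $BC_{p_U}$ for $p_U:U\to\pt$, evaluated on $\hat j_U^{*}A$. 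It therefore suffices to show that for every $V$ in $\LCH$ (here $V=U$ open in compact $X$) the map $BC_{p_V}:\hat p_{V,*}\to p_{V,*}\cB_V$ is an equivalence.

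This is the main obstacle, since \cref{jitwoprtherhetrhrh} only treats compact $V$ and $p_{V,*}$ is not proper. We use that $V$ is the filtered union of the interiors of its compact subsets $K\subseteq V$. On the sheaf side, $\hat p_{V,*}\simeq\lim_K \hat p_{K,*}\hat i_K^{*}$; more precisely we use the sheaf property to write $\hat p_{V,*}\simeq \lim_{W}\hat p_{W,*}\hat j_W^*$ over relatively compact opens $W$. On the $D$ side we use \eqref{gwerfwfeerferrgfdfg} from the open-exhaustion axiom: $p_{V,*}\simeq \lim_W p_{V,*}j_{W,*}j_W^{*}$. Both presentations are compatible with $BC$, by naturality of units as in the diagram \eqref{diagram:composeBC}. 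This reduces the claim to $W$ relatively compact. For such $W$ with closure $\bar W$ compact, we have $p_{W,*}=p_{\bar W,*}j_{W\to\bar W,*}$, and $j_*$ for an open can be expressed via localization as the fibre of $\id\to i_*i^!$-type terms. A cleaner route is to reduce further by a fibre sequence to closed pieces. We evaluate $BC_{p_W}$ on generators $\hat j_{W'\to W,!}\hat 1$ (by $D(\pt)$-linearity and colimit preservation, since $\hat p_{W,*}$ need not preserve colimits; so instead we apply the cosheaf argument only after composing with the compact $\bar W$). Concretely, we write $\hat p_{W,*}\hat j^*_{W\to \bar W}$ on $\Shv(\bar W)$ as $\hat p_{\bar W,*}\hat j_*\hat j^*$, and $\hat j_*\hat j^*\simeq\lim$ of $\hat i_*\hat i^*$ over compact $L\subseteq W$ using \eqref{gwerfwfeerferrgfdfg}. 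We then invoke \cref{jitwoprtherhetrhrh} for $\bar W$ and \cref{lem:BClosedimm} for $L\to\bar W$, and combine them via the composition triangle \eqref{triangle:composeBC}. We expect the compatibility of these limit presentations with the Beck--Chevalley maps to be the most delicate bookkeeping.
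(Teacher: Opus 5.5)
Your proposal is correct and is essentially the paper's proof. You test the unit on sections over each open $U$; the justification for this is that $\Shv(X,D(\pt))$ is section-determined, not that $\cK_X$ preserves limits. You then identify the target with $p_{U,*}j_U^{*}\cB_X$, exhaust $U$ via \eqref{gwerfwfeerferrgfdfg} and a cofinality argument by compact subsets, and apply \cref{jitwoprtherhetrhrh} on each compact piece. The reduction to compact $X$, the intermediate stage with relatively compact $W$, and the detour through $\bar W$ using \cref{lem:BClosedimm} and \eqref{triangle:composeBC} are harmless but unnecessary, since \cref{jitwoprtherhetrhrh} applies directly to each compact $K\subseteq U$.
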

 
\begin{proof}%[Proof of \cref{hkoperthetrgertge}] \phantomsection{}\label{oijiobprbgrbgfbdfgbdf}
We consider the adjunction \eqref{her09tuiortherte}. It suffices to show that the
unit $\id\to \cK_{X}\cB_{X}$ is an equivalence. Since $\Shv(-,D(\pt))$ is section-determined on $X$,
it suffices to show that
\begin{equation}\label{gerwferfwerfw}\hat \ev_{U}\to \hat \ev_{U}\cK_{X}\cB_{X}
\end{equation} is an equivalence for every $U$ in $\Open(X)$. Recall from \cref{koptrhrtegtergrtgertg} that $\hat \ev_{U}\simeq \hat p_{U,*}\hat j_{U\to X}^{*}$.  Using the   relations observed in \cref{kothprthertgertge}, we   calculate
$$  \hat  \ev_{U}\cK_{X}\cB_{X}\simeq \hat p_{U,*}\hat j_{U\to X}^{*} \cK_{X}\cB_{X}\simeq 
 \hat p_{U,*} \cK_{U}  j_{U\to X}^{*} \cB_{X}\simeq     p_{U,*}     j_{U\to X}^{*} \cB_{X} \ .$$
Using an instance of \eqref{gwerfwfeerferrgfdfg},
we can write
$$p_{U,*}     j_{U\to X}^{*}\cB_{X}\simeq \lim_{V\subseteq\subseteq U} p_{V,*}j_{V\to X}^{*}\cB_{X}$$
where the limit runs over open subsets $V$ of $U$ such that $\bar V$ is compact and contained in $U$.
By a cofinality argument, we can extend the index set of the limit to all subspaces whose closure is compact  contained in $U$, and then restrict to all compact subspaces contained in $U$.
This yields
$$p_{U,*}     j_{U\to X}^{*}\cB_{X} \simeq \lim_{K\subset  U} p_{K,*}i_{K\to X}^{*}\cB_{X}\ .$$
Applying \cref{jitwoprtherhetrhrh}, we obtain
$$p_{U,*}     j_{U\to X}^{*}\cB_{X} \simeq \lim_{K\subset  U} p_{K,*}\cB_{K} \hat i_{K\to X}^{*} \simeq
\lim_{K\subset  U} \hat  p_{K,*}  \hat i_{K\to X}^{*} \simeq \hat \ev_{U}\ ,$$
which shows that \eqref{gerwferfwerfw} is an equivalence and completes the proof.
\end{proof}

\begin{kor}\label[kor]{ihoptrhertgertge}
Assume that $D$ satisfies the assumptions stated in \cref{hkoperthetrgertge}.
If $\cK_{X}$ is conservative, then $\cB_{X}$ is an equivalence.  \end{kor}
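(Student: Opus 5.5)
The plan is to use the adjunction $\cB_{X}\dashv\cK_{X}$ from \eqref{her09tuiortherte} together with \cref{hkoperthetrgertge}. That theorem says $\cB_{X}$ is fully faithful, which means the unit $\id\to\cK_{X}\cB_{X}$ is an equivalence. A fully faithful left adjoint is an equivalence as soon as its counit $\epsilon:\cB_{X}\cK_{X}\to\id_{D(X)}$ is also an equivalence. So everything reduces to showing that the counit is invertible, and conservativity of $\cK_{X}$ is what will give this.

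Concretely, fix $A$ in $D(X)$ and apply $\cK_{X}$ to the counit $\epsilon_{A}:\cB_{X}\cK_{X}A\to A$. The triangle identity says that the composite
$$\cK_{X}A\xrightarrow{\eta_{\cK_{X}A}}\cK_{X}\cB_{X}\cK_{X}A\xrightarrow{\cK_{X}(\epsilon_{A})}\cK_{X}A$$
is the identity. The first map is an instance of the unit, so it is an equivalence by \cref{hkoperthetrgertge}. By two-out-of-three, $\cK_{X}(\epsilon_{A})$ is then an equivalence. Since $\cK_{X}$ is conservative by hypothesis, $\epsilon_{A}$ is an equivalence.

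Both unit and counit are therefore equivalences, so $\cB_{X}$ is an equivalence with inverse $\cK_{X}$. There is no real obstacle here; the only input beyond formal adjunction yoga is \cref{hkoperthetrgertge}. By \cref{kohperthertgertgrtge}, the hypothesis that $\cK_{X}$ is conservative is equivalent to $D$ being section-determined on $X$, which is how this corollary feeds into the main theorem.
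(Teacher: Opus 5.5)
Your proof is correct and is the standard argument the paper leaves implicit: the corollary is stated without proof right after \cref{hkoperthetrgertge}, whose proof shows that the unit of $\cB_{X}\dashv\cK_{X}$ is an equivalence. The triangle identity and two-out-of-three make $\cK_{X}(\epsilon)$ invertible, and conservativity of $\cK_{X}$ then makes the counit invertible, exactly as you wrote.
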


 %The following is a more precise reformulation of \cref{kohperhertgertgertg} from the introduction. In comparison with  \cref{hkoperthetrgertge} 
% the only additional assumption is section-determinedness.

  \begin{theorem} \label[theorem]{thkoperthertegrtger} Let  $D $ be a  six  functor  formalism on the standard Nagata context on $\LCH$  that %which
  has canonical descent, is section-determined, and  whose  associated cohomology  functor $\Gamma^{D}$  is finitary. Assume moreover that $D(\pt)$ is dualizable.
% \begin{enumerate}
 %\item  $D(\pt)$ is dualizable.
 %\item $\Gamma^{D}$ is homotopy invariant.\end{enumerate}
 Then the transformation
 $$\cB:\Shv(-,D(\pt))\to D(-)$$ from \eqref{boijoidfgbdfgbdgfb}
 is an equivalence of six functor formalisms. 
  \end{theorem}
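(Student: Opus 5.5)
We have to show two things: that each component $\cB_X$ is an equivalence of categories, and that $\cB$ is a morphism of six functor formalisms, i.e.\ compatible with $I$ and with $P$. Compatibility with $I$ holds already, since \cref{woijgoerferfwrg} makes $\cB$ a morphism of coefficient systems.

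For the objectwise statement we combine \cref{hkoperthetrgertge} with \cref{kohperthertgertgrtge}. The hypotheses of \cref{thkoperthertegrtger} include those of \cref{hkoperthetrgertge}, so each $\cB_X$ is fully faithful. Since $D$ is section-determined on every $X$, \cref{kohperthertgertgrtge} shows that $\cK_X$ is conservative. \cref{ihoptrhertgertge} then gives that $\cB_X$ is an equivalence. The argument behind that corollary is the standard one: the unit $\id\to\cK_X\cB_X$ is an equivalence, so the triangle identity shows $\cK_X(\epsilon)$ is an equivalence, and conservativity of $\cK_X$ makes the counit an equivalence.

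It remains to check compatibility with $P$: for every proper $f:X\to Y$ the Beck--Chevalley map $BC_f$ of \eqref{gwregerefwrefwer} must be an equivalence. Once all components $\cB_X$, $\cB_Y$ are equivalences, this is formal. If $\cB_X$ and $\cB_Y$ are inverse to $\cK_X$ and $\cK_Y$, then the description $BC'_f$ from \eqref{eq:BC'} together with \cref{kopgwefref9w} presents $BC_f$ as a composite of three maps:
\begin{itemize}
\item the unit $\eta^X$, which is invertible;
\item the equivalence $\hat f_*\cK_X\simeq\cK_Y f_*$ of \eqref{gergwergwerfwrefw4444};
\item the counit $\epsilon^Y$, which is invertible.
\end{itemize}
Hence $BC_f$ is an equivalence. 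Equivalently, a commuting square whose horizontal arrows are equivalences is automatically vertically right adjointable.

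Finally, we verify that this is an equivalence in the category of six functor formalisms. A morphism of six functor formalisms whose components are all equivalences has an inverse that is again compatible with $I$ and $P$, since mates of invertible Beck--Chevalley maps are invertible. So $\cB$ is an equivalence there. No step is a real obstacle; the substantive input is \cref{hkoperthetrgertge}, and the only care needed is the identification of $BC_f$ with $BC'_f$, which \cref{kopgwefref9w} provides.
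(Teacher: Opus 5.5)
Your proposal is correct and follows the paper's proof: the paper likewise reduces to showing each $\cB_X$ is an equivalence, and gets this from \cref{ihoptrhertgertge} (full faithfulness via \cref{hkoperthetrgertge}) together with \cref{kohperthertgertgrtge} (section-determinedness gives conservativity of $\cK_X$). The only difference is that you spell out the formal step the paper leaves implicit in its ``it suffices'': objectwise equivalences are automatically compatible with $P$, via $BC'_f$ and \cref{kopgwefref9w}, and their inverses are again morphisms of six functor formalisms.
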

  \begin{proof}
  It suffices to show that for every $X$ in $\LCH$ the transformation $\cB_{X}$ in \eqref{boijoidfgbdfgbdgfb}
 is an equivalence. In view of   \cref{ihoptrhertgertge},
 it suffices to show that $\cK_{X}$ is conservative.  By \cref{kohperthertgertgrtge} this is equivalent to the assumption that $D$ is section-determined.
  \end{proof}

%\begin{theorem}\label{thkoperthertegrtger}
%Assume that $D$ is a  6 functor calculus on $(\LCH,I,P)$ satisfying canonical descent which is continuous and hyperlocal and such that one of the following holds:\fuli{The question is whether we can use \cref{hkoptrhertgertger} to deduce profinite cohomology descent from continuity }
%\begin{enumerate}
% \item $D$ has profinite cohomology descent and $D(\pt)$ is dualizable
% \item $D$  has profinite cohomology descent and is cohomologically homotopy invariant.
% \end{enumerate}
% Then the transformation in \eqref{boijoidfgbdfgbdgfb}
% is an equivalence of 6 functor calculi. 
%\end{theorem}
%
  \begin{rem}
The statement of \eqref{thkoperthertegrtger} means that
\eqref{boijoidfgbdfgbdgfb} is a morphism of six functor formalisms, i.e., is also compatible with $P$, and that
 the functor
$\cB_{X}:\Shv(X,D(\pt))\to D(X)$ is an equivalence for every  $X$ in $\LCH$.
\end{rem}

\begin{rem}\label[rem]{rem:mainthmhtpyinv}
	By \cref{rem:htpyinsteadofdual}, \cref{thkoperthertegrtger} is still true if we replace the assumption that $D(\pt)$ is dualizable with $\Gamma^D$ being homotopy invariant.
\end{rem}

\begin{kor}
	Any continuous six functor formalism $D$ in the sense of \cite{arXiv:2507.13537} is equivalent to $\Shv(-,D(\pt))$.
\end{kor}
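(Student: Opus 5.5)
The plan is to reduce the corollary to \cref{thkoperthertegrtger}, using the variant of it for homotopy invariant cohomology from \cref{rem:mainthmhtpyinv} where needed. This means checking four hypotheses for a continuous six functor formalism $D$ in the sense of \cite{arXiv:2507.13537}: canonical descent, section-determinedness, finitariness of $\Gamma^{D}$, and either dualizability of $D(\pt)$ or homotopy invariance of $\Gamma^{D}$.

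First I unpack the definition in \cite{arXiv:2507.13537}. A continuous six functor formalism there is a six functor formalism on $(\LCH,I,P)$ which has canonical descent in the sense of \cref{kojghptrertgt} (this definition is taken verbatim from \cite[Def. 4.11]{arXiv:2507.13537}), is finitary in the sense of \cref{hkerpohertgertgtr}, and is stalk-determined in the sense of \cref{kopherthrgertgtg}. Canonical descent is therefore immediate.

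Next, for section-determinedness I combine the two parts of \cref{okhpetrhrferferferfetgerge}. Finitariness of $D$ gives fd-determinedness by \cref{okhpetrhrferferferfetgerge}.\ref{ijgohwphrtherthrthe1}. Together with canonical descent, compatibility with $I$ and $P$, and stalk-determinedness, \cref{okhpetrhrferferferfetgerge}.\ref{ijgohwphrtherthrthe} then yields that $D$ is section-determined. Since finitariness of $D$ implies finitariness of $\Gamma^{D}$ by \cref{hkoptrhertgertger}, the third hypothesis also holds.

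The remaining hypothesis is the main obstacle. \cref{thkoperthertegrtger} needs $D(\pt)$ to be dualizable, while \cref{rem:mainthmhtpyinv} instead needs $\Gamma^{D}$ to be homotopy invariant. I would first check whether the continuity notion of \cite{arXiv:2507.13537} includes dualizability of the values; Zhu's framework works with dualizable (continuous) presentable categories, so I expect $D(\pt)$ to be dualizable as part of the definition, and then \cref{thkoperthertegrtger} applies directly. If dualizability is not built in, I would fall back on \cref{rem:mainthmhtpyinv} and try to prove homotopy invariance of $\Gamma^{D}$ from the axioms of \cite{arXiv:2507.13537}, for example from a $\mathbb{R}$- or $[0,1]$-invariance condition if one is imposed there. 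Once these points are settled, \cref{thkoperthertegrtger} shows that $\cB:\Shv(-,D(\pt))\to D$ is an equivalence of six functor formalisms, which proves the corollary.
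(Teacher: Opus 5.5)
Your proposal is correct and follows the paper's route. Canonical descent and stalk-determinedness are part of the definition; section-determinedness comes from the two parts of \cref{okhpetrhrferferferfetgerge}; finitariness of $\Gamma^D$ comes from \cref{hkoptrhertgertger}, which the paper's proof uses only implicitly; and then \cref{thkoperthertegrtger} applies. Your hedge resolves the way you expected: the paper takes dualizability of $D(\pt)$ to be part of Zhu's definition of continuity, so the fallback via \cref{rem:mainthmhtpyinv} is not needed.
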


\begin{proof}
	By definition, $D$ is continuous in the sense of \cite{arXiv:2507.13537}  if it satisfies canonical descent, %it 
	is finitary, is stalk-determined, and if $D(\pt)$ is dualizable. By \cref{okhpetrhrferferferfetgerge}, it follows that $D$ is section-determined. Therefore the claim is a direct %desired conclusion is a 
consequence of \cref{thkoperthertegrtger}.
\end{proof}

    \bibliographystyle{alpha}
\bibliography{forschung2021-1}

\end{document}